


\documentclass{amsart}

\usepackage[mathscr]{eucal}
\usepackage{amssymb}
\usepackage{pifont}
\usepackage{graphicx}
\usepackage{psfrag} 
\usepackage[usenames,dvipsnames]{color}
\usepackage[normalem]{ulem}
\usepackage{amsthm}
\usepackage{bbm}
\usepackage{enumerate}
\usepackage{array}
\usepackage{amsmath}

\usepackage{hyperref}
\hypersetup{colorlinks=true,linkcolor={Brown},citecolor={Brown},urlcolor={Brown}}

\usepackage{cleveref}

\numberwithin{equation}{section}
\setcounter{tocdepth}{1}

\hyphenation{right-equivariant group-oid group-oids Gro-then-dieck}


\makeindex


\usepackage[all]{xy}
\CompileMatrices

\newdir{ >}{{}*!/-10pt/\dir{>}}



\swapnumbers 

\newtheorem{Thm}[equation]{Theorem}
\newtheorem{Prop}[equation]{Proposition}
\newtheorem{Lem}[equation]{Lemma}
\newtheorem{Cor}[equation]{Corollary}

\theoremstyle{remark}
\newtheorem{Rem}[equation]{Remark}
\newtheorem{Def}[equation]{Definition}
\newtheorem{Ter}[equation]{Terminology}
\newtheorem{Not}[equation]{Notation}
\newtheorem{Exa}[equation]{Example}
\newtheorem{Exas}[equation]{Examples}
\newtheorem{Cons}[equation]{Construction}
\newtheorem{Conv}[equation]{Convention}
\newtheorem{Hyp}[equation]{Hypotheses}
\newtheorem{Rec}[equation]{Recollection}

\newtheorem{Warning}[equation]{Warning}

\theoremstyle{definition}
\newtheorem*{Ack}{Acknowledgements}


\newcommand{\nc}{\newcommand}
\nc{\dmo}{\DeclareMathOperator}


\dmo{\Ab}{Ab}
\dmo{\add}{add} 
\dmo{\Aut}{Aut}
\dmo{\bicMack}{\biMack^{\mathsf{ic}}} 
\dmo{\bicMackk}{\biMack^{\mathsf{ic}}_{\kk}} 
\dmo{\bicCoMackk}{\biCoMack^{\mathsf{ic}}_{\kk}} 
\dmo{\biMack}{\mathsf{Mack}} 
\dmo{\biCoMack}{\mathsf{CohMack}} 
\dmo{\biMackk}{\mathsf{Mack}_{\kk}} 
\dmo{\biCoMackk}{\mathsf{CohMack}_{\kk}} 
\dmo{\Bimod}{\mathsf{Bimod}}
\dmo{\biker}{\mathsf{ker}}
\dmo{\Blah}{Blah}
\dmo{\Ch}{Ch}
\dmo{\CoInd}{CoInd}
\dmo{\CoMack}{CohMack}
\dmo{\Der}{D}
\dmo{\End}{End}
\dmo{\Fun}{\mathrm{Fun}} 
\dmo{\Hom}{Hom}
\dmo{\Ho}{Ho}
\dmo{\img}{im}
\dmo{\incl}{incl}
\dmo{\Ind}{Ind}
\dmo{\ind}{ind}
\dmo{\Inj}{Inj} 
\dmo{\Ker}{Ker}
\dmo{\Mackey}{Mack} 
\dmo{\Map}{Map}%
\dmo{\Mod}{\mathsf{Mod}}
\dmo{\modcat}{mod} 
\dmo{\Modcat}{Mod} 
\dmo{\Mor}{Mor}%
\dmo{\mot}{\mathsf{mot}} 
\dmo{\cohmot}{\mathsf{mot}^{\mathsf{coh}}} 
\dmo{\Mot}{\mathsf{Mot}} 
\dmo{\Motk}{\Mot_{\kk}} 
\dmo{\CohMotk}{\mathsf{Mot}^{\mathsf{coh}}_{\kk}} %
\dmo{\Obj}{Obj}
\dmo{\perm}{\mathsf{biperm}} 
\dmo{\biperm}{\mathsf{biperm}} 
\dmo{\permcat}{perm}
\dmo{\dashperm}{\text{-}\mathsf{perm}}%
\dmo{\dashpperm}{\text{-}\mathsf{pperm}}%
\dmo{\Proj}{Proj} 
\dmo{\pr}{pr}
\dmo{\PsFunJJ}{\PsFun_{\JJ_!}^{\JJ^\prime\textrm{\!-}\mathsf{oplax}}}
\dmo{\PsFunJop}{\PsFun_{{{\JJ}_{{}_{*}}}}}
\dmo{\PsFunJ}{\PsFun_{\JJ_!}}
\dmo{\PsFunoplax}{\PsFun^{\mathsf{oplax}}}
\dmo{\PsFun}{\mathsf{PsFun}} 
\dmo{\Res}{Res}
\dmo{\SH}{SH}
\dmo{\Spancat}{Sp}
\dmo{\Spanname}{{\sf Span}}
\dmo{\Stab}{Stab}
\dmo{\twoFun}{2\mathsf{Fun}}

\nc\noloc{\nobreak\mspace{6mu plus 1mu}{:}\nonscript\mkern-\thinmuskip\mathpunct{}\mspace{2mu}}
\nc{\ababs}{{\sl ab absurdo}}
\nc{\Add}{\mathsf{Add}}
\nc{\ADD}{\mathsf{ADD}}
\nc{\ADDick}{\ADD^{\mathsf{ic}}_{\kk}}
\nc{\adhoc}{{\sl ad hoc}}
\nc{\adjto}{\rightleftarrows}
\nc{\adj}{\dashv\,}
\nc{\afortiori}{{\sl a fortiori}}
\nc{\aka}{{a.\,k.\,a.}\ }
\nc{\all}{\mathsf{all}}
\nc{\apriori}{{\sl a priori}}
\nc{\ass}{\mathrm{ass}} 
\nc{\bbA}{\mathbb{A}}
\nc{\bbB}{\mathbb{B}}
\nc{\bbC}{\mathbb{C}}
\nc{\bbD}{\mathbb{D}}
\nc{\bbF}{\mathbb{F}}
\nc{\bbI}{\mathbb{I}}
\nc{\bbM}{\mathbb{M}}
\nc{\bbN}{\mathbb{N}}
\nc{\bbP}{\mathbb{P}}
\nc{\bbQ}{\mathbb{Q}}
\nc{\bbR}{\mathbb{R}}
\nc{\bbZ}{\mathbb{Z}}
\nc{\bs}{\backslash}
\nc{\BurnG}{\cat{A}(G)}
\nc{\cat}[1]{\mathcal{#1}}
\nc{\Cat}{\mathsf{Cat}}
\nc{\CAT}{\mathsf{CAT}}
\nc{\cf}{{\sl cf.}\ }
\nc{\Cf}{{\sl Cf.}\ }
\nc{\coeq}{\mathop{\mathrm{coeq}}}
\nc{\colim}{\mathop{\mathrm{colim}}}
\nc{\costar}{**}
\nc{\co}{{\mathrm{co}}}
\nc{\DD}{\cat{D}}
\nc{\Displ}{\displaystyle}
\nc{\doublequot}[3]{#1\backslash #2/#3}
\nc{\Ecell}{\rotatebox[origin=c]{90}{$\Downarrow$}} 
\nc{\eg}{{\sl e.g.}\ } 
\nc{\Eg}{{\sl E.g.}\ } 
\nc{\eps}{\varepsilon}
\nc{\equalby}[1]{\overset{\textrm{#1}}{=}}
\nc{\exact}{\mathsf{ex}}
\nc{\faithful}{\mathsf{faithful}}
\nc{\faith}{\mathsf{faithf}}
\nc{\final}{\textrm{\scriptsize{\ding{93}}}} 
\nc{\Funadd}{\Fun_{\amalg}}
\nc{\Funplus}{\Fun_{+}}
\nc{\fun}{\mathrm{fun}} 
\nc{\GG}{\mathbb{G}}
\nc{\gpdG}{{\groupoidf_{\!\smallslash\!G}}} 
\nc{\gpdHG}{{\groupoidf_{\!\smallslash\!H\times G}}} %
\nc{\gpdGG}{{\groupoidf_{\!\smallslash\!G_1\times G_2}}} %
\nc{\gpd}{\groupoid}
\nc{\gps}{\mathsf{groups}} 
\nc{\groconn}{\groupoid_{\mathsf{conn}}}
\nc{\groupoidf}{\groupoid{}^{\smallfaithful}}
\nc{\gpdf}{\groupoidf}%
\nc{\groupoid}{\mathsf{gpd}}
\nc{\group}{\mathsf{group}} 
\nc{\Gsets}{G\sset}
\nc{\HGfK}{\doublequot{H}{G}{f(K)}}%
\nc{\HGK}{\doublequot HGK}
\nc{\Homcat}[1]{\Hom_{\cat #1}}
\nc{\hooklongleftarrow}{\longleftarrow\joinrel\rhook}
\nc{\hooklongrightarrow}{\lhook\joinrel\longrightarrow}
\nc{\hook}{\hookrightarrow}
\nc{\Hsets}{H\mathsf{-sets}}
\nc{\ICAdd}{\Add_{\mathsf{ic}}}%
\nc{\ICADD}{\ADD_{\mathsf{ic}}}%
\nc{\ICADDkk}{\ADD_{\mathsf{ic},\kk}}%
\nc{\Idcat}[1]{\Id_{\cat{#1}}}
\nc{\id}{\mathrm{id}}
\nc{\Id}{\mathrm{Id}}
\nc{\ie}{{\sl i.e.}\ }
\nc{\into}{\mathop{\rightarrowtail}}
\nc{\inv}{^{-1}}
\nc{\Ivout}[1]{\Ivo{\sout{#1}}}
\nc{\isocell}[1]{\undersett{ #1}{\overset{\sim}{\Ecell}}} 
\nc{\Isocell}[1]{\undersett{ #1}{\overset{\sim}{\Longrightarrow}}}
\nc{\isoEcell}{\overset{\sim}{\Rightarrow}} 
\nc{\isotoo}{\stackrel{\sim}\longrightarrow}
\nc{\isoto}{\buildrel \sim\over\to}
\nc{\Ivo}[1]{{\color{OliveGreen}#1}}
\nc{\JJ}{\mathbb{J}}
\nc{\kk}{\Bbbk}
\nc{\KK}{\mathrm{KK}}
\nc{\leps}{{}^{\ell}\eps}
\nc{\leta}{{}^{\ell}\eta}
\nc{\loccit}{{\sl loc.\ cit.}}
\nc{\lotoo}[1]{\overset{#1}{\,\longleftarrow\,}}
\nc{\loto}[1]{\overset{#1}{\leftarrow}}
\nc{\lto}{\leftarrow}
\nc{\lun}{\mathrm{lun}} 
\nc{\Mackintro}[1]{(Mack\,\ref{Mack-#1-intro})}
\nc{\Mack}[1]{(Mack\,\ref{Mack-#1})}
\nc{\Mid}{\,\big|\,}
\nc{\MMod}{\,\text{-}\Mod}%
\nc{\mmod}{\,\text{-mod}}%
\nc{\ffree}{\,\text{-free}}
\nc{\MM}{\cat{M}}
\nc{\Muniv}{\cat{M}_{\mathsf{univ}}}
\nc{\Ncell}{\rotatebox[origin=c]{0}{$\Uparrow$}} 
\nc{\NEcell}{\rotatebox[origin=c]{135}{$\Downarrow$}} 
\nc{\NN}{\cat{N}}
\nc{\NWcell}{\rotatebox[origin=c]{-135}{$\Downarrow$}} 
\nc{\oEcell}[1]{\overset{\scriptstyle #1}{\Ecell}} 
\nc{\oWcell}[1]{\overset{\scriptstyle #1}{\Wcell}} 
\nc{\ointo}[1]{\overset{#1}{\rightarrowtail}}
\nc{\olto}[1]{\overset{#1}\lto}
\nc{\onto}{\mathop{\twoheadrightarrow}}
\nc{\op}{{\mathrm{op}}}
\nc{\otoo}[1]{\overset{#1}{\,\longrightarrow\,}}
\nc{\oto}[1]{\overset{#1}\to}
\nc{\Paul}[1]{{\color{Orange}#1}}
\nc{\pih}[1]{\tau_{1}#1}%
\nc{\Pout}[1]{\Paul{\sout{#1}}}
\nc{\PsFunJindex}{\PsFun_{{\JJ_!}} \ \ {{\JJ}_{!}}\textrm{-strong pseudo-functors}}
\nc{\qquadtext}[1]{\qquad\textrm{#1}\qquad}
\nc{\quadtext}[1]{\quad\textrm{#1}\quad}
\nc{\ra}{\rightarrow}
\nc{\reps}{{}^{r\!}\eps}
\nc{\restr}[1]{{|_{\scriptstyle #1}}}
\nc{\reta}{{}^{r\!}\eta}
\nc{\run}{\mathrm{run}} 
\nc{\Sad}{\mathsf{Sad}}
\nc{\SAD}{\mathsf{SAD}}
\nc{\sbull}{{\scriptscriptstyle\bullet}}
\nc{\Scell}{\rotatebox[origin=c]{0}{$\Downarrow$}} 
\nc{\SEcell}{\rotatebox[origin=c]{45}{$\Downarrow$}} 
\nc{\SET}[2]{\big\{\,#1\Mid#2\,\big\}}
\nc{\set}{\mathrm{set}} 
\nc{\sethat}{\widehat{\mathsf{set}}} 
\nc{\Set}{Set} 
\nc{\smallfaithful}{\mathsf{f}}
\nc{\smallslash}{{}^{\scriptscriptstyle/}}
\nc{\smat}[1]{\left(\begin{smallmatrix} #1 \end{smallmatrix}\right)}
\nc{\spanG}{{\widehat{\mathsf{gp}\,\,}\!\!\mathsf{d}}{}^\smallfaithful_{\!{}^{\scriptscriptstyle/}\!G}}
\nc{\Spanhat}{\textrm{\sf S}\widehat{\textrm{\sf pan}}} %
\nc{\Spanhatrf}{\Spanhat{}^\rfree} 
\nc{\Span}{\Spanname}
\nc{\spancat}{\mathrm{Sp}} 
\nc{\spank}{\spancat_{\kk}}
\nc{\biset}{\mathsf{biset}} 
\nc{\bbiset}{\textrm{-}\biset} %
\nc{\bbisetrf}{\textrm{-}\biset^\rfree} %
\nc{\bisethatrf}{\mathsf{bi}\widehat{\mathsf{set}}{}^\rfree} 
\nc{\bisethatHfree}{\mathsf{b}\widehat{\mathsf{iset}}{}^{H\textsf{-free}}} 
\nc{\bisetcat}{\mathrm{Bis}} 
\nc{\bisetcatbif}{\mathrm{Bis}^\mathrm{bif}_\kk\!} 
\nc{\bisetcatrf}{\mathrm{Bis}^\mathrm{rf}_\kk\!} 
\nc{\bisetfun}{\mathrm{BisF}} 
\nc{\bifree}{\mathsf{bif}}
\nc{\rfree}{\mathsf{rf}}
\nc{\lfree}{\mathsf{lf}}
\nc{\sset}{\textrm{-}\set}
\nc{\ssethat}{\textrm{-}\sethat}
\nc{\str}{\mathsf{str}}
\nc{\SWcell}{\rotatebox[origin=c]{-45}{$\Downarrow$}} 
\nc{\too}{\mathop{\longrightarrow}\limits}
\nc{\tristars}{\begin{center} $ *\ *\ * $ \end{center}}
\nc{\tSpan}{\pih{\Spanname}}
\nc{\undersett}[1]{\underset{\scriptstyle #1}}
\nc{\un}{\mathrm{un}} 
\nc{\vcorrect}[1]{{\vphantom{\vbox to #1em{}}}}
\nc{\Wcell}{\rotatebox[origin=c]{90}{$\Uparrow$}} 
\nc{\what}[1]{\widehat{\cat{#1}}}
\nc{\xto}{\xrightarrow}
\nc{\xlto}{\xleftarrow}

\nc{\xBur}{\mathrm{B^c}} 
\nc{\xBurk}{ \mathrm{B}^{\mathrm{c}}_{\kk} } 
\nc{\Bur}{\mathrm{B}} 
\nc{\Burk}{\Bur_{\kk}} 

\nc{\To}{\Rightarrow}
\nc{\isoTo}{\buildrel\sim\over\To}
\nc{\Too}{\Longrightarrow}

\nc{\bipermk}{\perm^\rfree_\kk}
\nc{\bP}{\overline{\cat{P}}} %

%

\begin{document}


\title{Cohomological Mackey 2-functors}
\author{Paul Balmer}
\author{Ivo Dell'Ambrogio}
\date{\today}

\address{\ \vfill
\noindent PB: UCLA Mathematics Department, Los Angeles, CA 90095-1555, USA}
\email{balmer@math.ucla.edu}
\urladdr{http://www.math.ucla.edu/$\sim$balmer}

\address{\ \medbreak
\noindent ID: Univ.\ Lille, CNRS, UMR 8524 - Laboratoire Paul Painlev\'e, F-59000 Lille, France 
}
\email{ivo.dell-ambrogio@univ-lille.fr}
\urladdr{http://math.univ-lille1.fr/$\sim$dellambr}

\begin{abstract} \normalsize
We show that the bicategory of finite groupoids and right-free permutation bimodules is a quotient of the bicategory of Mackey 2-motives introduced in \cite{BalmerDellAmbrogio20}, obtained by modding out the so-called cohomological relations. This categorifies Yoshida's Theorem for ordinary cohomological Mackey functors, and provides a direct connection between Mackey 2-motives and the usual blocks of representation theory.
\end{abstract}

\thanks{First-named author partially supported by NSF grant~DMS-1901696.}
\thanks{Second-named author partially supported by Project ANR ChroK (ANR-16-CE40-0003) and Labex CEMPI (ANR-11-LABX-0007-01).}

\subjclass[2010]{20J05, 18B40, 55P91}
\keywords{Mackey functors, Mackey 2-functors, Mackey 2-motives, Yoshida, bisets, permutation modules, decategorification.}

\maketitle


\tableofcontents

%
\section{Introduction}
\label{sec:introduction}%

We consider Mackey 2-functors in the sense of~\cite{BalmerDellAmbrogio20} and develop three themes.
The first two are general. They consist of a decategorification result and a new simpler description of the Mackey 2-motives of~\cite{BalmerDellAmbrogio20}. The third theme is the more specific purpose of this paper, the study of \emph{cohomological} Mackey 2-functors.
Let us now explain in some detail the interplay of these three topics.

\tristars

A \emph{Mackey 2-functor}~$\MM$ consists of additive categories $\MM(G)$ depending on finite groupoids~$G$ and whose variance in~$G$ is reminiscent of ordinary Mackey functors~\cite{Webb00}. This categorification, recalled in \Cref{sec:reminder}, involves restriction functors $u^*\colon \MM(G)\to \MM(K)$ for every morphism $u\colon K\to G$ and \emph{two-sided} adjoints $i_!= i_*\colon \MM(H)\to \MM(G)$ to $i^*$ for every faithful $i\colon H\into G$. The commutation of $i_!$ and $u^*$ is governed by a \emph{2-Mackey formula}~\eqref{eq:Mackey-square}. As discussed at length in~\cite[Ch.\,4]{BalmerDellAmbrogio20}, this type of structure has been used for a long time in a variety of settings.
For instance, in representation theory, the category $\MM(G)=\Modcat (\kk G)$ of $\kk G$-modules is an example of a Mackey 2-functor. In topology, the equivariant stable homotopy category $\MM(G)=\SH(G)$ of genuine $G$-spectra is another one.

Recall~\cite[\S\,7]{Webb00} that an ordinary Mackey (1-)\,functor~$M$ is \emph{cohomological} if, for every subgroup~$H\le G$, the composite $I_H^G\,R^G_H\colon M(G)\to M(G)$ of the restriction homomorphism $R^G_H\colon M(G)\to M(H)$ with the induction (transfer) homomorphism $I_H^G\colon M(H)\to M(G)$ is equal to multiplication by the index~$[G\!:\!H]$ on the abelian group~$M(G)$.
It might be tempting to define a cohomological Mackey 2-functor~$\MM$ as one for which the composite of~$i^*\colon \MM(G)\to \MM(H)$ with $i_!\colon \MM(H)\to \MM(G)$ is some form of multiplication by~$[G\!:\!H]$ on~$\MM(G)$.
This is not a good idea, however, if only because of a lack of interesting examples.
We propose here a definition that shifts the cohomological relation to the level of 2-cells, as follows:

\begin{Def}
\label{Def:co2mack-intro}
A Mackey 2-functor $\MM$ is \emph{cohomological} if, whenever $i\colon H\into G$ is the inclusion of a subgroup $H$ in a finite group~$G$, the composite
\[
\xymatrix{
\Id_{\MM(G)} \ar@{=>}[r]^-{\reta} & \Ind^G_H\Res^G_H \ar@{=>}[r]^-{\leps} & \Id_{\MM(G)}
}
\]
equals multiplication by the index $[G\!:\!H]$, where $\reta$ is the unit of the right adjunction $\Res^G_H=i^* \dashv i_*=\Ind_H^G$ and $\leps$ the counit of the left one  $\Ind_H^G=i_! \dashv i^*=\Res_H^G$.
\end{Def}

The Mackey 2-functors arising in representation theory are often cohomological, like the above $\MM(G)=\Modcat (\kk G)$ or the derived version $\MM(G)=\Der(\Modcat (\kk G))$. We present further examples of a more geometric nature in \Cref{sec:cohom-2-Mackey}.

The reader should be warned that the standard decategorification of a Mackey 2-functor via the Grothendieck group~$K_0$, as in~\cite[\S\,2.5]{BalmerDellAmbrogio20}, does \emph{not} necessarily turn cohomological Mackey 2-functors into cohomological Mackey 1-functors (see Remarks~\ref{Rem:decats} and~\ref{Rem:comack-gen-formula}).
To restore such a connection we propose in \Cref{sec:Hom-decat} a different type of decategorification, that works as follows:
\begin{Thm}[Hom-Decategorification]
\label{Thm:Hom-decat-intro}%
Let $\MM\colon \gpd^\op\to \ADD$ be a Mackey 2-functor. Let $G$ be a finite group and let $X,Y\in \MM(G)$ be two objects. Then there is an ordinary Mackey functor $M=M_{\MM,G,X,Y}$ for~$G$ whose value on every subgroup~$H\le G$ is given by the abelian group
\[
M(H)=\Homcat{\MM(H)}(\Res^{G}_H X\,,\,\Res^{G}_H Y).
\]
\end{Thm}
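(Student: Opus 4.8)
The plan is to define, for each subgroup $H\le G$, the abelian group $M(H)=\Homcat{\MM(H)}(\Res^G_H X,\Res^G_H Y)$ as stated, and then to produce the restriction homomorphisms $R^H_K$ and induction homomorphisms $I^H_K$ for $K\le H$, and finally to check the Mackey axioms (functoriality, and the Mackey double-coset formula). The restriction maps are the obvious ones: given $K\le H\le G$, the functor $\Res^H_K\colon\MM(H)\to\MM(K)$ sends a morphism $\Res^G_H X\to\Res^G_H Y$ to a morphism $\Res^G_K X\to\Res^G_K Y$ (using the pseudo-functoriality isomorphisms $\Res^H_K\Res^G_H\cong\Res^G_K$ to identify source and target), which defines the group homomorphism $R^H_K\colon M(H)\to M(K)$. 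For the induction homomorphism $I^H_K\colon M(K)\to M(H)$, the point is to use the \emph{two-sided} adjunction: given $\varphi\colon\Res^G_K X\to\Res^G_K Y$ in $\MM(K)$, with $i\colon K\into H$, form
\[
\Res^G_H X\xRightarrow{\reta}\Ind^H_K\Res^H_K\Res^G_H X\cong\Ind^H_K\Res^G_K X\xrightarrow{\Ind^H_K\varphi}\Ind^H_K\Res^G_K Y\cong\Ind^H_K\Res^H_K\Res^G_H Y\xRightarrow{\leps}\Res^G_H Y,
\]
i.e.\ conjugate $\varphi$ through the unit of the right adjunction and the counit of the left one, exactly mimicking Definition~\ref{Def:co2mack-intro}. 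One checks this is additive in $\varphi$.

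Next I would verify the four Mackey-functor axioms. Functoriality of $R$ in subgroups is immediate from pseudo-functoriality of $\MM$. Functoriality of $I$ (transitivity $I^H_L\circ I^L_K=I^H_K$ for $K\le L\le H$) follows from the compatibility of the units/counits with composition of faithful inclusions, which is part of the Mackey 2-functor structure (the coherence of the adjunctions along $K\into L\into H$); this is the kind of 2-categorical bookkeeping that is routine but must be done carefully. Conjugation-invariance (the $g$-action on $M$) comes from the conjugation 2-isomorphisms built into $\MM$. The heart of the matter is the \emph{Mackey double-coset formula}: for $K,L\le H$ one must show $R^H_L\circ I^H_K=\sum_{LgK}I^L_{L\cap\,{}^gK}\circ c_g\circ R^K_{L^g\cap K}$. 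My strategy here is to feed the definitions of $R$ and $I$ into the \emph{2-Mackey formula}~\eqref{eq:Mackey-square} governing the commutation of $i_!$ with $u^*$: the composite $\Res^H_L\,\Ind^H_K$ decomposes, via that formula, as a sum indexed by the double cosets $L\bs H/K$ of composites of the form $\Ind^L_{?}\,c_g\,\Res^K_{?}$, and tracking what the unit $\reta$ and counit $\leps$ become under this decomposition yields exactly the double-coset sum on morphism groups. This requires knowing how $\reta$ and $\leps$ interact with the canonical isomorphism of the 2-Mackey square — concretely, that the "big" unit/counit decomposes as a matrix of "small" units/counits along the pieces.

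The main obstacle I anticipate is precisely this last step: making the 2-Mackey formula bookkeeping match the classical double-coset formula on the nose, with all the pseudo-functoriality isomorphisms $\Res\Res\cong\Res$ and $\Ind\Ind\cong\Ind$ inserted in the right places and the coherence of the two adjunctions (left and right) with those isomorphisms. A clean way to organize this is to work inside the bicategory of spans (or, equivalently, to phrase everything in terms of the "special Mackey squares" and their string-diagram calculus of~\cite{BalmerDellAmbrogio20}), so that the identity $R^H_L I^H_K=\sum_{LgK}\cdots$ reduces to an identity of 2-cells in $\MM$ obtained by whiskering the canonical 2-cell of a single Mackey square; once that 2-cell identity is established abstractly, applying $\Homcat{\MM(H)}(\Res^G_H X,-)$ and $\Homcat{\MM(H)}(-,\Res^G_H Y)$ and using that $\Homcat{}$ turns finite coproducts in $\MM(H)$ into finite products of abelian groups gives the stated additive formula for $M$. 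I expect the rest of the axioms to be straightforward consequences of the coherence already built into a Mackey 2-functor.
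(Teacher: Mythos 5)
Your proposal matches the paper's own proof (of \Cref{Thm:Hom-decat-general}, of which the stated theorem is the $G$-local special case via \Cref{Exa:local-general}): the restriction and transfer maps are defined by exactly the same formulas ($f\mapsto \rho_u\circ u^*(f)\circ\lambda_u$ and $g\mapsto \leps\circ i_*(\rho_i^{-1}g\lambda_i^{-1})\circ\reta$), and the Mackey axiom is likewise verified by inserting the mate isomorphism $(\gamma^{-1})_*=(\gamma_!)^{-1}$ of the rectified 2-Mackey square and checking its compatibility with the units and counits --- the ``main obstacle'' you single out is precisely the content of the two heptagon diagrams in the paper's proof, resolved there by the definition of the mate, a zig-zag identity, and the coherence conditions on the family $\{X_H,Y_H,\lambda,\rho\}$. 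The only cosmetic difference is that the paper states the Mackey axiom for a single groupoid-level Mackey square, with the classical double-coset sum then falling out of additivity, rather than proving the double-coset formula directly.
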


This is \Cref{Thm:Hom-decat-general} specialized to \Cref{Exa:local-general}.
We emphasize that this result works without the label `cohomological' and is thus of interest in the generality of~\cite{BalmerDellAmbrogio20}.
When $\MM$ is moreover cohomological then all its Hom-decategorifications are cohomological in the classical sense (\Cref{Thm:coh-decats}).

Several examples of Hom-decategorification are discussed in \Cref{sec:cohom-2-Mackey}.
For instance, group cohomology (the ur-example of a cohomological Mackey functor) is a Hom-decategorification of the derived category Mackey 2-functor $G\mapsto \Der(\kk G)$, itself a prime example of a cohomological Mackey 2-functor.

We also prove in \Cref{sec:cohom-2-Mackey} a descent result for cohomological Mackey 2-functors; see \Cref{Thm:p-local-rec}.

\tristars

Let us now turn our attention to Mackey motives.
Fix a commutative ring $\kk$ of coefficients, for instance~$\kk=\bbZ$. Suppose from now on that all our Mackey 2-functors take values in idempotent-complete $\kk$-linear additive categories $\MM(G)$ and $\kk$-linear functors~$u^*$. In~\cite{BalmerDellAmbrogio20}, we constructed a $\kk$-linear bicategory $\Motk$ (there denoted `$\kk \Spanhat$') of \emph{Mackey 2-motives}, together with a canonical contravariant embedding of the 2-category of finite groupoids
\begin{equation}
\label{eq:mot-intro}%
\mot\colon\gpd^\op \hooklongrightarrow \Motk.
\end{equation}
It enjoys the following universal property: Every Mackey 2-functor $\MM$ factors as $\MM\cong\widehat{\MM}\circ\mot$ for a unique $\kk$-linear pseudo-functor~$\widehat{\MM}$ on~$\Motk$.
The original construction of the bicategory $\Motk$ is pretty involved, with spans of 1-cells \emph{and} spans of 2-cells. (See \Cref{Rec:Mot}.) We prove in \Cref{sec:2-Mack-bisets} that there is a simpler description of~$\Motk$. Again, this holds beyond the cohomological world.
\begin{Thm}[Mackey 2-motives via bisets]
\label{Thm:new-Mackey-2-motives-intro}%
General $\kk$-linear Mackey 2-motives are modeled by the block-completion $(\kk \bisethatrf)^\flat$ of the bicategory $\kk \bisethatrf$ whose objects are finite groupoids, 1-morphisms are right-free finite bisets between them, and 2-morph\-isms are $\kk$-linear combinations of spans of equivariant maps.
\end{Thm}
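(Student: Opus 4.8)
The plan is to produce a biequivalence $\Motk \simeq (\kk\bisethatrf)^\flat$ that is compatible with the canonical embeddings of $\gpd^\op$ on both sides. The comparison functor should be the evident ``collapse'' that replaces a finite groupoid by its underlying set-theoretic data: a span of finite groupoids is sent to its associated biset, and a span of spans to the induced span of equivariant maps. The first step is to repackage the construction of $\Motk = \kk\Spanhat$ from \cite{BalmerDellAmbrogio20}. One recalls that, up to the idempotent and block completions built into $\Spanhat$, the $1$-cells of $\Motk$ from $H$ to $G$ are the ($\kk$-linearly completed) spans $H \leftarrow P \rightarrow G$ whose leg on the side dictated by the $1$-cell convention of \cite{BalmerDellAmbrogio20} is faithful, and that its $2$-cells are $\kk$-linear combinations of (isomorphism classes of) spans of such spans. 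Equivalently: $\Motk$ is the block-completion $(\kk\Spanhatrf)^\flat$ of the $\kk$-linear bicategory $\kk\Spanhatrf$ of right-free (i.e.\ faithful-leg) groupoid spans, with $2$-cells the $\kk$-linear combinations of spans of such spans. This is a mild reorganization of the definition of $\Motk$, using the fact, established in \cite{BalmerDellAmbrogio20}, that after idempotent and block completion the faithful-leg spans already account for all $1$-cells of $\Motk$.

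Granting this, the theorem reduces to a biequivalence $\kk\Spanhatrf \simeq \kk\bisethatrf$ of the two $\kk$-linear bicategories \emph{before} completion. On $1$-cells one sends a faithful-leg span $H \xleftarrow{a} P \xrightarrow{b} G$ to its canonically associated right-free $(H,G)$-biset, formed as the coproduct over the connected components of $P$ of the transitive right-free bisets attached to the (now faithful) subgroup-and-homomorphism data carried by $(a,b)$, and one extends $\kk$-linearly and additively. Because one leg is faithful, this assignment is fully faithful and essentially surjective on each Hom-category --- transitive right-free bisets and connected faithful-leg spans are classified by literally the same data --- so it is an equivalence of the $\kk$-linear categories of $1$-morphisms. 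One then checks that it is monoidal for composition: the pullback of faithful-leg spans corresponds, under the collapse, to the balanced tensor product of the associated bisets, both being governed by the same double-coset (Mackey) decomposition, and the identity spans go to the identity bisets. On $2$-cells, a span of faithful-leg spans $P \leftarrow S \rightarrow Q$ over $H \times G$ is sent objectwise to the span of equivariant maps $U_P \leftarrow U_S \rightarrow U_Q$ between the associated bisets; this part then follows from the $1$-cell equivalence, because the $2$-cells of either bicategory are spans in the respective categories of $1$-morphisms, these categories are equivalent, and the equivalence preserves the pullbacks used to compose $2$-cells vertically. Here faithfulness enters a second time, to ensure that a morphism of faithful-leg spans is rigid (no conjugation ambiguity) and hence matches exactly the ``based'' morphism of underlying bisets.

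It then remains to promote the biequivalence $\kk\Spanhatrf \simeq \kk\bisethatrf$ to the block-completions. Using that $(-)^\flat$ sends $\kk$-linear biequivalences to biequivalences (and absorbs any completion that $\kk\bisethatrf$ itself still lacks), one obtains $\Motk \simeq (\kk\Spanhatrf)^\flat \simeq (\kk\bisethatrf)^\flat$. Finally, to see that this is the intended equivalence, one unwinds the definition of $\mot\colon \gpd^\op \hooklongrightarrow \Motk$: on objects it is the identity on groupoids; on a functor $u\colon K \to G$ it is the faithful-leg span with identity (faithful) leg and structural leg $u$, which collapses precisely to the right-free biset computing $\Res$ along $u$; and on a $2$-cell of groupoids it is the span of equivariant maps with invertible legs encoding the induced isomorphism of right-free bisets. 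This identifies the composite $\gpd^\op \xrightarrow{\mot} \Motk \simeq (\kk\bisethatrf)^\flat$ with the tautological embedding of $\gpd^\op$ into the biset model, as the statement requires.

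I expect the main obstacle to be the first step together with the handling of the completions: pinning down exactly which idempotent and block completions are hidden in the involved construction of $\Motk$ in \cite{BalmerDellAmbrogio20}, verifying that faithful-leg spans suffice to generate all of them, and checking that the single block-completion $(-)^\flat$ in the streamlined model reproduces the same bicategory. By comparison, the $1$-cell equivalence (same classifying data on both sides), the Mackey double-coset formula for composition, the $2$-cell identification (formal from the $1$-cell case), and the pseudofunctoriality of $(-)^\flat$ are either classical or already available from \cite{BalmerDellAmbrogio20}.
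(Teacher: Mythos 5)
Your proposal is correct and follows essentially the same route as the paper: establish a biequivalence between the uncompleted span and biset bicategories, then transport it through span-category formation on the Hom-categories, $\kk$-linearization and block-completion, checking compatibility with the embedding of $\gpd^\op$. The only difference is one of packaging: where you argue the Hom-category equivalences by matching the classifying data of connected right-faithful spans and transitive right-free bisets, the paper constructs an explicit quasi-inverse (the Grothendieck construction $\int S$) and verifies the equivalence via \Cref{Lem:rf}, \Cref{Lem:ess-image-int} and \Cref{Lem:S-vs-Rint(S)}.
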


Block-completion~$(-)^\flat$ is the 2-categorical analogue of idempotent-completion, a standard feature of motivic constructions. It simply adds formal summands, both at the 0- and 1-level, in order to split idempotent 2-cells; see \Cref{Rec:b}.

\Cref{Thm:new-Mackey-2-motives-intro} categorifies the usual equivalence between Webb's \emph{inflation functors} \cite{Webb93} and Bouc's \emph{right-free biset functors}~\cite{Bouc10}. See \Cref{Rem:Webbouc}.

\bigbreak

It is natural to look for an analogous motivic construction with \emph{cohomological} Mackey 2-functors.
Since these are just Mackey 2-functors satisfying some additional relations at the level of 2-cells, one can obtain the corresponding bicategory of \emph{cohomological Mackey 2-motives} $\CohMotk$ by formally modding out in~$\Motk$ the relevant 2-cells.
Thus formulated, $\CohMotk$ remains rather mysterious and one of our main goals is to give a simple computable description.
It comes as a categorification and generalization of Yoshida's Theorem \cite{Yoshida83b}; see \Cref{Rem:cf-Yoshida}.

\begin{Thm}[Cohomological Mackey 2-motives]
\label{Thm:univ-cohom-2-Mackey-intro}%
Cohomological $\kk$-linear Mackey 2-motives are modeled by the block-completion $(\bipermk)^\flat$ of the bicategory $\bipermk$ whose objects are finite groupoids, 1-morphisms are right-free permutation bimodules, and 2-morphisms are equivariant $\kk$-linear maps (see \Cref{Def:biperm-rfree}). In other words, there is a canonical pseudo-functor $\cohmot\colon\gpd^\op\to (\bipermk)^\flat=:\CohMotk$ sending a groupoid to itself and a functor $u\colon H\to G$ to the $G,H$-bimodule $\kk[G(u-,-)]\colon$ $H^\op\times G\to \Modcat(\kk)$, such that every $\kk$-linear cohomological Mackey 2-functor $\MM$ factors uniquely through this pseudo-functor up to isomorphism.
\[
\xymatrix@R=14pt{
\gpd^\op \ar[dr]_(.4){\cohmot} \ar[rr]^-{\MM} && {\ADDick}\\
 & \CohMotk\ar@{-->}[ur]_(.6){\exists !\,\widehat{\MM}}  &
 }
\]
\end{Thm}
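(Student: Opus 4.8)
The plan is to obtain $\CohMotk$ as an explicit quotient of the (simplified) bicategory of general Mackey 2-motives, and then identify that quotient with $(\bipermk)^\flat$. First I would invoke \Cref{Thm:new-Mackey-2-motives-intro} to replace $\Motk$ by the concrete model $(\kk\bisethatrf)^\flat$, whose 1-cells are right-free bisets and whose 2-cells are $\kk$-linear spans of equivariant maps. The cohomological relations of \Cref{Def:co2mack-intro} translate, under the universal pseudo-functor $\mot$, into specific 2-cells in $\kk\bisethatrf$: for each subgroup inclusion $i\colon H\into G$, the composite $\leps\circ\reta$ built from the biset realizations of $i_!\dashv i^*\dashv i_*$ must be set equal to $[G\!:\!H]\cdot\id$. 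So I would first compute, inside $\kk\bisethatrf$, exactly which span-of-bisets 2-cell $\kappa_{H,G}\colon \id\To \id$ (on the identity biset of $G$) this difference $\leps\circ\reta - [G\!:\!H]\,\id$ produces, and then define $\CohMotk := (\kk\bisethatrf)^\flat / \langle \kappa_{H,G}\rangle$, the block-completion of the quotient by the 2-ideal these 2-cells generate. By the universal property of $\Motk$ together with that of the block-completion and of quotients by 2-ideals, this $\CohMotk$ automatically has the stated universal property: $\kk$-linear cohomological Mackey 2-functors are precisely the $\kk$-linear pseudo-functors out of $\Motk$ that kill the $\kappa_{H,G}$, hence factor uniquely (up to iso) through $\CohMotk$, and the composite $\gpd^\op\xto{\mot}\Motk\to\CohMotk$ is the desired $\cohmot$.

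The substantive part is then to prove the equivalence $(\kk\bisethatrf)^\flat/\langle\kappa_{H,G}\rangle \simeq (\bipermk)^\flat$. For this I would construct a pseudo-functor $\Phi\colon \kk\bisethatrf \to \bipermk$ on the nose: send a groupoid to itself, send a right-free $G,H$-biset $U$ to the permutation bimodule $\kk[U]$, and send a span of equivariant maps to the corresponding $\kk$-linear map of permutation bimodules (i.e. the transfer/restriction combination, exactly as in the classical passage from the span/biset category to the category of permutation modules). One checks $\Phi$ is well-defined, $\kk$-linear on 2-cells, and respects horizontal/vertical composition up to coherent isomorphism; this is a 2-categorical bookkeeping exercise parallel to the classical Yoshida/biset-functor correspondence, but carried out one categorical level up. Next I would verify that $\Phi$ kills exactly the $\kappa_{H,G}$: the key computation is that in $\bipermk$ the composite $\kk[G]\otimes_{\kk H}\kk[G]\to \kk[G]$ coming from $\leps\circ\reta$ is genuinely multiplication by $[G\!:\!H]$, because $\kk[G]$ is a \emph{permutation} bimodule and the relevant trace is literally the sum over the $[G\!:\!H]$ cosets — this is the categorified shadow of the classical fact that for cohomological Mackey functors $I_H^G R^G_H=[G\!:\!H]$. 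Hence $\Phi$ factors through the quotient, giving $\bar\Phi\colon (\kk\bisethatrf)^\flat/\langle\kappa_{H,G}\rangle \to (\bipermk)^\flat$.

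It remains to show $\bar\Phi$ is a biequivalence: essentially surjective on objects and 1-cells, and fully faithful on 2-cells (after block-completion). Essential surjectivity on 1-cells amounts to: every right-free permutation bimodule is, up to direct summand, of the form $\kk[U]$ for a right-free biset $U$ — true since a permutation bimodule is by definition a sum of $\kk[G/L]$-type pieces, and right-freeness forces these to be transitive bisets, which become $\kk[U]$ on the nose and pass to the block-completion to catch the summands. The heart of the matter, and the step I expect to be the main obstacle, is \textbf{2-cell full faithfulness}: one must show that $\Hom$-sets of 2-cells $U\To V$ in $\kk\bisethatrf$, after quotienting by the 2-ideal $\langle\kappa_{H,G}\rangle$ and block-completing, map isomorphically onto $\Hom_{\bipermk}(\kk[U],\kk[V])$, i.e.\ onto all equivariant $\kk$-linear bimodule maps $\kk[U]\to\kk[V]$. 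Surjectivity here is the categorified Yoshida statement — every map of permutation bimodules is a $\kk$-combination of "basis" maps coming from double cosets, hence from spans of bisets — and injectivity is the statement that the \emph{only} relations among such spans, modulo the biset relations, are consequences of the cohomological relations $\kappa_{H,G}$. I would prove this by exhibiting explicit $\kk$-bases on both sides indexed by the same combinatorial data (double cosets, or point-stabilizer data of the bisets $U,V$) and checking $\bar\Phi$ matches them; the relation-counting is where the real work lies and where one genuinely re-runs Yoshida's theorem, now as an equivalence of $\Hom$-categories rather than of module categories. Finally, coherence and the uniqueness "up to isomorphism" in the universal property follow formally from the universal properties of $\Motk$, of quotients by 2-ideals, and of block-completion, as recalled in \Cref{Rec:Mot} and \Cref{Rec:b}.
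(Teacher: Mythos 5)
Your proposal follows essentially the same route as the paper: pass to the biset model of $\Motk$, identify the cohomological 2-cells as the spans $\big[G\Leftarrow G\times_H G\To G\big]-[G\!:\!H]\,\id$, construct the sum-over-fibers linearization pseudo-functor to $\bipermk$, and prove its kernel is exactly the ideal generated by those 2-cells by matching double-coset-indexed generators against an explicit $\kk$-basis of $\Hom_{\bipermk}(\kk[U],\kk[V])$, before block-completing and invoking the universal properties. This is precisely the paper's \Cref{Prop:P}, \Cref{Thm:2-Yoshida} and \Cref{Thm:UP-coh}, so the plan is correct and matches the paper's argument.
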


A proof and more details on these constructions can be found in \Cref{sec:cohom-2-motives}. See in particular the 2-categorical universal property of~$\CohMotk$ in \Cref{Thm:UP-coh}.

\tristars

The organization of the paper is as follows. In the first part, we discuss general Mackey 2-functors, recalling the definitions in \Cref{sec:reminder}. We introduce Hom-decategorification in \Cref{sec:Hom-decat} and prove \Cref{Thm:Hom-decat-intro}. We revisit Mackey 2-motives in \Cref{sec:2-Mack-bisets} and prove \Cref{Thm:new-Mackey-2-motives-intro}.

In the second part of the paper, from \Cref{sec:cohom-2-Mackey} onwards, we focus on \emph{cohomological} Mackey 2-functors, starting with easy properties and the first examples and applications. Then, \Cref{sec:cohom-2-motives} contains the construction of \emph{cohomological Mackey 2-motives} and the proof of \Cref{Thm:univ-cohom-2-Mackey-intro}. In the short final \Cref{sec:motivic-decompositions}, we discuss motivic decompositions in $\Motk$ and $\CohMotk$ and we compare them in terms of certain explicit ring maps (\Cref{Thm:motivic-comparison}). As another application, we establish that each value category $\MM(G)$ of a cohomological Mackey 2-functor $\MM$ admits a canonical decomposition in terms of the blocks of the group algebra (\Cref{Thm:universal-block-factorization}).

\begin{Ack}
We are grateful to Baptiste Rognerud for several useful discussions. We thank the referee for suggesting how to streamline the paper.
\end{Ack}

%
\section{Recollections}
\label{sec:reminder}%

We fix a commutative ring~$\kk$ throughout the article.

\begin{Ter}
We use the language of bicategories, 2-categories (\ie strict bicategories), pseudo-functors etc., in a standard way as recalled in \cite[App.\,A]{BalmerDellAmbrogio20}.
For simplicity, a \emph{$\kk$-linear} category means an \emph{additive} category enriched over $\kk$-modules.
Similarly, a \emph{$\kk$-linear} bicategory~$\cat{B}$ means one in which all Hom categories $\cat B(X,Y)$ and all horizontal composition functors are $\kk$-linear, and which admits finite direct sums of objects.
A pseudo-functor $\cat F$ is \emph{$\kk$-linear} if each functor $\cat F_{X,Y}$ is $\kk$-linear -- thus automatically preserves directs sums.
We call a contravariant pseudo-functor \emph{additive} if it turns coproducts of objects into products, as in \Mack{1} below.
\end{Ter}

\begin{Rec} \label{Rec:b}
An additive category is \emph{idempotent-complete} if every idempotent endomorphism $e= e^2$ on an object $X$ has an image, so that $X\cong \mathrm{Im}(e)\oplus \mathrm{Im}(1-e)$ identifying $e$ with the matrix $\mathrm{diag}(1,0)$.
Recall from \cite[\S\,A.7]{BalmerDellAmbrogio20} that a $\kk$-linear bicategory $\cat B$ is \emph{block-complete} if its Hom categories are idempotent-complete and if every decomposition of an identity 2-cell $\id_{\Id_X}$ in orthogonal idempotent 2-cells induces a direct sum decomposition of the object~$X$; so idempotent 2-cells split 1-cells and objects in directs sums.
Every $\kk$-linear bicategory~$\cat B$ admits a \emph{block-completion} \mbox{$\cat B\hookrightarrow \cat B^\flat$}, the universal $\kk$-linear pseudo-functor into a block-complete bicategory $\cat B^\flat$.
The precise construction of $\cat{B}^\flat$ is not difficult but requires some work. It can be found in~\cite[A.7.22]{BalmerDellAmbrogio20}. Suffice it to say that it is a categorification of Karoubi's classical idea to replace objects~$X$ by pairs $(X,e)$ where~$e=e^2\colon X\to X$ is an idempotent on~$X$, suitably adapted to the 2-categorical setting, \ie applied to both 0-cells (objects) and to 1-cells (as objects of the $\Hom$-categories).
\end{Rec}

\begin{Def} \label{Def:2Mack}
We recall that a \emph{\textup($\kk$-linear\textup) Mackey 2-functor} is the data of a 2-functor $\MM\colon \gpd^\op\to \ADD_\kk$ from the 2-category  of finite groupoids, functors and natural transformations to the 2-category of (possibly large) $\kk$-linear additive categories, additive functors and natural transformations.
It inverts the direction of 1-cells, so that we have a \emph{restriction} functor $u^*=\MM(u)\colon \MM(G)\to \MM(H)$ for every morphism (functor) of groupoids $u\colon H\to G$, and we have a natural isomorphism $\alpha^*\colon u^*\Rightarrow v^*$ for every natural isomorphism $u\Rightarrow v$. This is subject to four axioms:
\begin{enumerate}[\rm({Mack}\,1)]
\item
\label{Mack-1}%
\emph{Additivity:} $\MM(G_1\sqcup G_2) \overset{\sim}{\to} \MM(G_1)\times \MM(G_2)$ for all $G_1,G_2 \in \gpd$.
\item
\label{Mack-2}%
\emph{Adjoints:} For every faithful morphism $i\colon H\into G$, the restriction functor $i^*\colon \MM(G)\to \MM(H)$ admits a left adjoint $i_!$ and a right adjoint $i_*$.
\item
\label{Mack-3}%
\emph{Mackey formulas:} For every \emph{Mackey square} (a.k.a.\ pseudo-pullback, homotopy pullback; see \cite[Ch.\,2.1-2]{BalmerDellAmbrogio20}) as on the left-hand side below
\begin{equation}
\label{eq:Mackey-square}%
\vcenter{
\xymatrix@C=10pt@R=10pt{
& P \ar@{ >->}[rd]^j \ar[ld]_v \ar@{}[dd]|{\isocell{\gamma}} & \\
H \ar@{ >->}[rd]_i &  & K \ar[ld]^u \\
& G &
}}
\quad\quad
\rightsquigarrow
\quad\quad
\vcenter{
\xymatrix@L=5pt@R=12pt{
j_! \circ v^* \ar@{=>}[r]_-{\gamma_!}^-{\simeq} & u^* \circ i_! \\
u^* \circ i_* \ar@{=>}[r]_-{(\gamma^{-1})_*}^-{\simeq} & j_* \circ p^*
}}
\end{equation}
where $i$ and (thus)~$j$ are faithful, the two mates $\gamma_!$ and $(\gamma^{-1})_*$ for the adjunctions of \Mack{2} are both isomorphisms as displayed above.
\item
\label{Mack-4}%
\emph{Ambidexterity:} There exists a natural isomorphism $i_!\cong i_*$ between the left and right adjoints of every faithful~$i$.
\end{enumerate}
We may occasionally want to replace the source $\gpd$ of a Mackey 2-functor with a more general 2-category `of groupoids', \cf \Cref{Rem:more-general} or \cite[Hyp.\,5.1.1]{BalmerDellAmbrogio20}.
\end{Def}

\begin{Conv}
Unless otherwise stated, our Mackey 2-functors take values in the sub-2-category $\ADDick\subset \ADD_\kk$ of idempotent-complete additive categories. (In any case, every Mackey 2-functor can always be idempotent-completed termwise.)
\end{Conv}

\begin{Conv} \label{Rem:rectified}
By the Rectification Theorem \cite[Ch.\,3]{BalmerDellAmbrogio20}, the two adjunctions $i_! \dashv i^* \dashv i_*$ can be chosen to satisfy some extra properties.
For instance, we may take $i_!=i_*$ as functors (obvious from \Mack{4}).
Moreover, we may choose units and counits so that the base change isomorphisms in \Mack{3} are mutual inverses
\begin{equation}
\label{eq:2-Mack}%
(\gamma_!)\inv=(\gamma\inv)_*
\end{equation}
and so that the pseudo-functors $(-)_!$ and $(-)_*$ on $\JJ^\co$ induced by the left, resp.\ the right, adjunctions (\cite[A.2.10]{BalmerDellAmbrogio20}) are the same.
We will assume throughout all Mackey 2-functors to be \emph{rectified}, \ie to come with such (uniquely determined) superior choice of left and right adjunctions.
(We will assume this even in the more general setting of \Cref{sec:Hom-decat}, where the Rectification Theorem may not apply.)
Their units and counits will be denoted $\leta\colon \Id\Rightarrow i^*i_!$ and $\leps\colon i_!i^*\Rightarrow \Id$ (for the left adjunction $i_!\dashv i^*$) and $\reta\colon \Id \Rightarrow i_*i^*$ and $\reps\colon i^*i_*\Rightarrow\Id$ (for the right one $i^*\dashv i_*$).
\end{Conv}

\tristars

We make use of two closely related bicategories: that of \emph{bisets} and that of \emph{bimodules}. We briefly recall these well-known notions in order to establish notation.

\begin{Rec}[Bisets] \label{Rec:biset}
Let $G$ and $H$ be finite groupoids.
By a \emph{\textup(finite\textup) $G,H$-biset} $S= {}_GS_H$ we mean a functor $S\colon H^\op\times G\to \set$ to the category of finite sets.
We will often write
\[
g\cdot s = S(\id, g)(s)
\quad \textrm{ and } \quad
s \cdot h = S(h,\id) (s)
\]
to denote the `left action' of a morphism $g\in G(x,x')$ and the `right action' of a morphism $h \in H(y',y)$ on an element $s\in S(y,x)$.
We denote by $\biset$ the bicategory with finite groupoids as objects, all $G,H$-bisets ${}_G S_H$ as 1-cells $H\to G$, and all equivariant maps (\ie natural transformations) $\alpha\colon S\Rightarrow T$ as 2-cells.

The horizontal composition of bisets is provided by the tensor product of functors  (a.k.a.\ set-theoretic coends). Concretely, the value at $(z,x)\in K^\op\times G$ of a composite biset $({}_GT_{H})\circ ({}_HS_K) = T \times_H S$ is the following coequalizer of sets:
\begin{equation}
\label{eq:coend}%
\big(T\times_H S\big)(z,x)
 = \coeq \left (\coprod_{y'\to y\atop \in \,\Mor H} T(y,x) \times S(z,y') \rightrightarrows \!\coprod_{y\atop \in \,\Obj H} T(y,x) \times S(z,y) \right)\kern-.7em
\end{equation}
Even more concretely, an element of $(T\times_H S)(z,x)$ is the equivalence class of a pair $(t,s) \in T(y,x)\times S(z,y)$ for some $y\in \Obj H$, and two pairs $(t,s)$ and $(t',s')$ are equivalent if and only if there exists a morphism $h \in H(y,y')$ such that $(t,h\cdot s)= (t'\cdot h , s')$.
We will write $[t,s]$ for such a class, or sometimes $[t,s]_y$ or ``$[t,s]$~at~$y$'' if we need to keep track of the object $y\in \Obj H$.
The actions of $G$ and $K$ on $T\times_H S$ are the evident $g \cdot [t,s]=[g\cdot t,s]$ and $[t,s]\cdot k=[t,s\cdot k]$.

The identity biset of $G$ is the Hom-functor $\Id_G = G(-,-) \colon G^{\op}\times G\to \set$. (By defining bisets and bimodules (below) as functors on $H^\op\times G$, rather than the perhaps more common $G\times H^\op$, we think of them as `generalized Hom-functors'.)

A $G,H$-biset ${}_GS_{H}$ is \emph{right-free} if the right action of $H$ is free in the usual sense: $s\cdot h = s \Rightarrow h=\id$, for any $(y,x)\in H^\op\times G$, $s\in S(y,x)$ and $h\in H(y',y)$.
It is a straightforward exercise to see that the tensor product $T\times_H S$ of two right-free bisets is again right-free.
Thus right-free bisets form a 2-full sub-bicategory of $\biset$ which we denote $\biset^\rfree$.
\end{Rec}

\begin{Rem}
\label{Rem:bisets-agree}
For finite groups, seen as one-object groupoids, the notions of bisets and their composition in \Cref{Rec:biset} agree with those of Bouc~\cite{Bouc10}. The full subcategory of groups in the 1-truncation $\pih (\biset)$ is the ordinary category of bisets of \emph{loc.\,cit.}, and \emph{ditto} for right-free bisets. (\Cf \cite{DellAmbrogio22a}.)
\end{Rem}

\begin{Rec}[Bimodules]
\label{Rec:bimodules}
Let $G$ and $H$ be finite groupoids.
A \emph{$G,H$-bimodule} is a functor $M\colon H^\op\times G\to \Modcat(\kk)$.
We denote by $\Bimod_\kk$ the bicategory with finite groupoids as objects, $G,H$-bimodules as 1-cells $H\to G$, and equivariant maps as 2-cells.
The horizontal composition of bimodules is given by the usual tensor product, \ie $\kk$-linearly enriched coends (as in~\eqref{eq:coend} but taking the coequalizer in~$\Modcat(\kk)$)
\[
({}_G M_H)\circ({}_H N_K)=M\otimes_{\kk H}N,
\]
that we simply denote $M\otimes_{H}N$.
A \emph{\textup(finite\textup) permutation $G,H$-bimodule} is a $G,H$-bimodule which admits a finite $G,H$-invariant basis: There exist finite sets $S(y,x)$ for all $(y,x)\in H^\op\times G$ which are collectively stable under the $G$- and $H$-actions and such that each $S(y,x)$ is a basis of the (free) $\kk$-module~$M(y,x)$.

Permutation bimodules are closed under tensor products, hence form a sub-bicategory $\biperm_\kk$ of~$\Bimod_\kk$. Of course ordinary permutation $\kk G$-modules are simply the essential image of $G$-sets inside $\kk G$-modules, under $\kk$-linearization. Extended to bicategories, this takes the following form:
\end{Rec}

\begin{Prop}[Linearization]
\label{Prop:k-linearization}%
There is a well-defined and canonical pseudo-functor $\kk[-]\colon \biset\to \biperm_\kk$ mapping a groupoid~$G$ to itself, a $G,H$-biset~$U$ to the $G,H$-bimodule~$\kk[U]$ defined by taking the free $\kk$-module termwise: $(\kk[U])(y,x)=\kk[U(y,x)]$, and extending equivariant maps $\kk$-linearly.
\end{Prop}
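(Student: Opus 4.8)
The plan is to realise $\kk[-]$ as the pseudo-functor induced, by ``change of base'', from the free-module functor $\kk[-]\colon\set\to\Modcat(\kk)$, which is cocontinuous (being left adjoint to the forgetful functor) and strong symmetric monoidal ($\kk[X\times Y]\cong\kk[X]\otimes_\kk\kk[Y]$, $\kk[\ast]\cong\kk$). On objects, $\kk[-]$ is the identity of $\gpd$. On a pair of objects $H,G$ it is the functor $\biset(H,G)\to\Bimod_\kk(H,G)$ sending a $G,H$-biset $U\colon H^\op\times G\to\set$ to the termwise composite $\kk[U]=\kk[-]\circ U$ and an equivariant map $\alpha\colon U\Rightarrow V$ to $\kk[\alpha]$; functoriality is immediate. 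By construction the sets $U(y,x)$ form a $G,H$-invariant basis of $\kk[U]$, so $\kk[U]$ is a finite permutation bimodule and this functor factors through $\biperm_\kk(H,G)$.

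Next I would produce the two structure constraints. For the composition constraint, fix a $G,H$-biset $T$ and an $H,K$-biset $S$. The coend $T\times_H S$ of~\eqref{eq:coend} is the coequalizer of a parallel pair of maps between coproducts of binary products of finite sets; applying $\kk[-]$ termwise and using that it is cocontinuous (so carries $\coprod$ to $\bigoplus$ and coequalizers to coequalizers) and strong monoidal (so carries $\times$ to $\otimes_\kk$) turns this diagram into precisely the coequalizer of $\bigoplus$'s of $\otimes_\kk$'s that computes $\kk[T]\otimes_H\kk[S]$ in $\Bimod_\kk$. Tracing elements, $[t,s]\mapsto t\otimes s$ gives a natural isomorphism of $G,K$-bimodules $c_{T,S}\colon\kk[T\times_H S]\isoto\kk[T]\otimes_H\kk[S]$, natural in both $T$ and $S$, which carries the invariant basis $\{[t,s]\}$ to the invariant basis $\{t\otimes s\}$ and is therefore a $2$-cell of $\biperm_\kk$. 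For the unit constraint, the identity $1$-cell of $G$ in $\biset$ is the Hom-functor $G(-,-)$, and its linearization $\kk[G(-,-)]$ is by definition the identity $1$-cell of $G$ in $\Bimod_\kk$; so the unit constraint $u_G$ may be taken to be the identity.

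It then remains to verify the pseudo-functor coherence axioms, namely the compatibility of $c$ with the associators and with the left/right unitors. Since horizontal composition, associators and unitors in both $\biset$ and $\Bimod_\kk$ are assembled from the base symmetric monoidal categories $(\set,\times)$ and $(\Modcat(\kk),\otimes_\kk)$ in the same fashion, after unwinding the (finitely many) coends involved these diagrams reduce to instances of the coherence of the strong symmetric monoidal functor $\kk[-]$, and the check is a routine element-wise verification. (Alternatively, one invokes the general fact that a cocontinuous strong symmetric monoidal functor between closed symmetric monoidal cocomplete categories induces such a pseudo-functor between the associated bicategories of enriched bimodules, applied here once one has noted that all constructions stay finite because the groupoids are finite.) The only genuinely content-bearing step is the comparison isomorphism $c_{T,S}$ and its coherence; I do not expect any essential obstacle, the sole mild delicacy being the bookkeeping that coproducts, tensor factors and invariant bases remain finite under composition, which is clear since the indexing groupoids are finite. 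This establishes the pseudo-functor $\kk[-]\colon\biset\to\biperm_\kk$.

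Finally, for the right-free statement: if the right $H$-action on a biset $U$ is free, then $H$ acts freely on the canonical basis $\{U(y,x)\}$ of $\kk[U]$, so $\kk[U]$ is a right-free permutation bimodule, i.e.\ an object of $\perm^\rfree_\kk(H,G)$. As $\perm^\rfree_\kk$ is closed under horizontal composition (\Cref{Rec:bimodules}) and the constraints $c_{T,S}$ and $u_G$ above restrict to it, the pseudo-functor corestricts to a pseudo-functor $\kk[-]\colon\biset^\rfree\to\perm^\rfree_\kk$, as asserted.
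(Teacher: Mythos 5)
Your proposal is correct and takes essentially the same route as the paper, whose proof simply exhibits the canonical isomorphism $\kk[U\times_H V]\isoto\kk[U]\otimes_H\kk[V]$, $(u,v)\mapsto u\otimes v$, and leaves the remaining checks to the reader. Your cocontinuity/strong-monoidality framing of that isomorphism, together with the explicit treatment of the unit constraint, coherence, and right-freeness, just supplies the details the paper omits.
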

\begin{proof}
This is a well-known phenomenon with groups and it extends to finite groupoids without a wrinkle. For horizontal functoriality, we use the canonical isomorphism $\kk[U\times_H V]\isoto \kk[U]\otimes_{H}\kk[V]$ for every $G,H$-biset~$U$ and $H,K$-biset~$V$, given on basis elements by $[u,v]\mapsto u\otimes v$. Details are left to the reader.
\end{proof}

\begin{Def}
\label{Def:biperm-rfree}%
We denote by $\perm^\rfree_\kk$ the 2-full sub-bicategory of $\biperm_\kk$ with the same objects (finite groupoids) and whose 1-cells are \emph{right-free} permutation bimodules, that is, those which belong to the essential image of the above linearization $\kk[-]$ restricted to right-free bisets. In other words, linearization restricts to a canonical pseudo-functor $\kk[-]\colon \biset^\rfree\to \perm^\rfree_\kk$.
Note that, for $H$ a group, a 1-cell ${}_GM_H$ in $\perm_\kk$ is right-free in this sense iff it is free as a right $\kk H$-module.
\end{Def}

\section{Hom-decategorification}
\label{sec:Hom-decat}%

In this section we show that any Mackey 2-functor provided with a coherent choice of a pair of objects in each of its value categories gives rise to an ordinary Mackey functor of Hom-groups.
This is a sort of `decategorification' procedure, distinct from the more usual $K_0$-style decategorification.

For future reference, we work here under more general hypotheses:

\begin{Hyp} \label{Hyp:appendix}
In this section, $(\GG;\JJ)$ denotes a \emph{spannable pair} as in \cite[\S3]{DellAmbrogio22a}, \ie an essentially small extensive  (2,1)-category with sufficiently many Mackey squares and coproducts with respect to a 2-subcategory~$\JJ$ closed under them and containing all equivalences. The examples to keep in mind are $(\gpd; \gpdf)$, used in the other sections of this article, and $(\gpdG;\gpdG)$, used in this section; but $\GG$ does not necessarily consist of groupoids nor $\JJ$ of faithful 1-morphisms.
The definition of a Mackey 2-functor still makes immediate sense for a general spannable pair $(\GG;\JJ)$.
\end{Hyp}

\begin{Def} [Mackey 1-functors]
\label{Def:ordinary-MF}
The point of \Cref{Hyp:appendix} is that it allows us to define a \emph{span category} $\Spancat(\GG;\JJ):= \pih \Span( \GG; \JJ )$ which is semi-additive, \ie equipped with finite biproducts induced by the coproducts of objects in~$\GG$.
Then we can define an \emph{\textup(ordinary\textup)  Mackey \textup(1-\textup)functor for $(\GG;\JJ)$} to be an additive functor
\[
M\colon \Spancat(\GG;\JJ) \to \Ab
\]
to the category of abelian groups.
Concretely (\cf \cite[\S3]{DellAmbrogio22a}), a Mackey 1-functor $M$ for~$(\GG;\JJ)$ consists of an abelian group~$M(G)$ for every $G\in\Obj (\GG)$ together with a \emph{restriction} homomorphism $u^\sbull\colon M(G)\to M(H)$ for every $u\colon H\to G$ in~$\GG$ and a \emph{transfer} homomorphism $u_\sbull\colon M(H)\to M(G)$ when moreover $u\in \JJ$; this data must satisfy the following axioms: %
\begin{enumerate}[\rm(A)]
\item
\label{it:1-Mackey-fun}%
\emph{Functoriality:}
We have $\id^\sbull=\id_\sbull=\id$.
For all $K\xto{v} H\xto{u}G$, we have $(u\circ v)^\sbull=v^\sbull\circ u^\sbull$; and also $(u\circ v)_\sbull=u_\sbull\circ v_\sbull$ if they belong to~$\JJ$.
\item
\label{it:1-Mackey-iso}%
\emph{Isomorphism invariance:} For every 2-isomorphism $\alpha \colon u \overset{\sim}{\Rightarrow} v$ in~$\GG$, we have $u^\sbull = v^\sbull$; and also $u_\sbull = v_\sbull$ if they belong to~$\JJ$.
\item
\label{it:1-Mackey-add}%
\emph{Additivity:}
$M(\emptyset) \cong 0$ and every coproduct $G\overset{i}{\to} G\sqcup H \overset{\;j}{\gets} H$ in~$\GG$ yields an isomorphism
$(i^\sbull, j^\sbull)^t \colon M(G\sqcup H) \overset{\sim}{\to} M(G)\oplus M(H)$ with inverse $(i_\sbull , j_\sbull)$.
\item
\label{it:1-Mackey-formula}%
\emph{Mackey formula:}
For every Mackey square in~$\GG$ with $i$ and $q$ in~$\JJ$
\begin{equation}
\label{eq:Mackey-square-in-1-Mackey}%
\vcenter{
\xymatrix@C=10pt@R=10pt{
& P \ar[ld]_-{p} \ar[rd]^{q} \ar@{}[dd]|{\isocell{\gamma}} & \\
H \ar[rd]_-{i} &  & K \ar[ld]^-{u} \\
& G &
}}
\end{equation}
we have $u^\sbull\circ i_\sbull=q_\sbull\circ p^\sbull$.
\end{enumerate}
\end{Def}

\begin{Exas}
\label{Exas:ordinary-MF}
By specializing \Cref{Def:ordinary-MF} to various choices of $(\GG;\JJ)$ we obtain several classical variations on the notion of Mackey functor.
For instance, $(\gpdf; \gpdf)$ provides the so-called \emph{globally defined Mackey functors}.
The choice $(\gpd; \gpdf)$ results in the (global) \emph{inflation functors}.
Taking $\GG = \JJ =  \gpdG \cong G\sset$ to be groupoids faithfully embedded in a fixed group~$G$, we get the original `$G$-local' notion of \emph{Mackey functors for~$G$}.
See \cite{DellAmbrogio22a} for explanations and details.
\end{Exas}

\begin{Def}[Coherent family of pairs]
\label{Def:coherent-pair}
Let $\MM\colon \GG^\op \to \ADD$ be a Mackey 2-functor for $(\GG;\JJ)$. By a \emph{coherent family of pairs of objects in~$\MM$} we mean a 2-functor
$\cat{M}'' \colon \GG^\op\to \ADD''$
which lifts~$\MM$ along the forgetful 2-functor $\ADD''\to \ADD$, where we write $\ADD'':= (\mathbb Z\ffree \sqcup \mathbb Z\ffree)/\ADD$ for the (pseudo) slice 2-category of $\ADD$ under a coproduct of two copies of the free additive category on one object. Concretely, such a 2-functor $\MM''$ amounts to the following data:
\begin{enumerate}[\rm(1)]
\setcounter{enumi}{-1}
\item \label{it:family-0}
 two objects $X_G,Y_G\in \MM(G)$ for every object $G\in \GG$ and
\item \label{it:family-1}
two isomorphisms $\lambda_u\colon X_H \overset{\sim}{\to} u^*X_G$ and $\rho_u\colon u^*X_G \overset{\sim}{\to} Y_H$ in the category $\MM(H)$ for every morphism $u\colon H\to G$ in~$\GG$,
\end{enumerate}
satisfying the following conditions:
\begin{enumerate}[\rm(1)]
\setcounter{enumi}{1}
\item \label{it:family-2}
the triangles
\begin{equation*}
\vcenter{
\xymatrix@R=4pt{
& u^*X_G \ar[dd]^-{\alpha_{X_G}}
\\
X_H \ar[ru]^-{\lambda_u} \ar[rd]_-{\lambda_v} & \\
& v^*X_G
}}
\qquadtext{and}
\vcenter{
\xymatrix@R=4pt{
u^*Y_G \ar[rd]^-{\rho_u} \ar[dd]_-{\alpha_{Y_G}}
& \\
& Y_H \\
v^*Y_G \ar[ru]_-{\rho_v}
}}
\end{equation*}
commute for every 2-morphism $\alpha\colon u\Rightarrow v \colon H\to G$ of~$\GG$,
\item \label{it:family-id}
the equations $\lambda_{\Id_G} = \id_{X_G}$ and $\rho_{\Id_G} = \id_{Y_G}$ hold for every object $G\in \GG$,
\item \label{it:family-fun}
and finally, the triangles
\[
\vcenter{
\xymatrix@R=8pt{
X_K \ar@/_2ex/[drr]_-{\lambda_{uv}} \ar[r]^-{\lambda_v} & v^* X_H \ar[r]^-{v^*(\lambda_u)} & v^* u^* X_G \ar@{=}[d] \\
& & (uv)^* X_G
}}
\quad\textrm{and}\quad
\vcenter{
\xymatrix@R=8pt{
v^*u^* Y_G \ar@{=}[d] \ar[r]^-{v^*(\rho_u)} & v^* X_H \ar[r]^-{\rho_v} & X_K \\
(uv)^*X_G \ar@/_3pt/[urr]_-{\rho_{uv}} & &
}}
\]
commute for every composable pair of 1-morphisms $K \overset{v\,}{\to} H \overset{u\,}{\to} G$ of~$\GG$,
\end{enumerate}
\end{Def}

\begin{Rem}
\label{Rem:autom-add}
Any such lift $\MM''$ is automatically additive, since the forgetful 2-functor $\ADD''\to \ADD$ creates direct sums of objects in the evident way.
\end{Rem}

\begin{Thm} [Hom-decategorification]
\label{Thm:Hom-decat-general}
Let $\MM$ be a Mackey 2-functor for $(\GG;\JJ)$ (as in \Cref{Hyp:appendix}) and let $\MM''$ be a coherent family of pairs of objects in~$\MM$ as in \Cref{Def:coherent-pair}, given by $\{ X_G, Y_G, \lambda_u , \rho_u \}_{G,u}$.
Then there exists a Mackey 1-functor for $(\GG;\JJ)$  (\Cref{Def:ordinary-MF})
\[
M := M_{\MM''} = M_{\{ X_G, Y_G, \lambda_u , \rho_u \}} \colon \pih ( \Span (\GG; \JJ)) \longrightarrow \Ab
\]
whose values are given by the  Hom group at the chosen pair
\[
M(G) := \MM(G)(X_G, Y_G)
\]
for all objects $G\in \GG$, with restriction maps (obviously) defined by
\[
\xymatrix@R=2pt{
u^\sbull  \colon  M(G) = \MM(G)(X_G,Y_G) \ar[r] & \MM(H)(X_H, Y_H) = M(H) \\
 \quad \quad f \ar@{|->}[r] & \rho_u \circ u^*(f) \circ \lambda_u
}
\]
for all $u\colon H\to G$
and induction maps (less obviously) defined by
\[
\xymatrix@R=2pt{
i_\sbull \colon  M(H) = \MM(H)(X_H,Y_H) \ar[r] & \MM(G)(X_G, Y_G) = M(G) \\
\quad \quad g \ar@{|->}[r] & \leps \circ i_*(\rho_i^{-1} g \lambda_i^{-1} ) \circ \reta
}
\]
for all $i\colon H\to G$ in~$\JJ$ where $\reta$ and $\leps$ are the (co)units of~\Cref{Rem:rectified}.
\end{Thm}

\begin{Exa} [The $G$-local case]
\label{Exa:local-general}
Suppose $\MM$ is a $G$-local Mackey 2-functor, \ie a Mackey 2-functor for $\GG=\JJ = \gpdG \cong G\sset$ where $G$ is a fixed finite group. Then any pair of objects $X_0,Y_0\in \MM(G)$ defines a coherent choice as in the theorem, by setting $X_{(H,i_H)}:= i_H^* X_0$ and $Y_{(H,i_H)} := i_H^* Y_0$ for every object $(H, i_H \colon H\rightarrowtail G)$ of $\gpdG$, and $\lambda_{(u,\theta)} := (\theta^*)^{-1}\colon i_H^*X_0 \to u^*i_K^*X_0$ and $\rho_{(u,\theta)} := \theta^*\colon u^*i_K^* Y_0 \to i_H^*X$
for every morphism $(u\colon H\to K, \theta\colon i_K u \overset{\sim}{\Rightarrow} i_H)$.
We thus obtain an ordinary Mackey functor $M$ for~$G$ in the classical sense such that $M(H) = \MM(H)(\Res^G_H X_0, \Res^G_H Y_0)$ for all subgroups $H\leq G$.
\end{Exa}

\begin{Exa}
\label{Exa:general-monoidal}
Suppose that $\MM$ is a Mackey 2-functor for $(\GG;\JJ)$ taking values in monoidal categories $\MM(G)$ and strong monoidal functors~$u^*$ (for instance $\MM$ could be a \emph{Green 2-functor} in the sense of \cite{DellAmbrogio21pp}).
Then we may take $X_G = Y_G := \mathbf 1$ to be the tensor unit of $\MM(G)$ and $\lambda_u, \rho_u$ to be the coherent isomorphisms of~$u^*$.
This produces a Mackey functor $M$ for $(\GG;\JJ)$ with $M(G)= \End_{\MM(G)}(\mathbf 1)$.
In the presence of tensor-compatible gradings, \eg in the case of tensor triangulated categories, we also have a graded version $G\mapsto M(G)= \End_{\MM(G)}^*(\mathbf 1)$.
\end{Exa}

\begin{proof}[Proof of \Cref{Thm:Hom-decat-general}]
The restriction and transfer maps defined in the theorem are clearly additive, and we need to show that they satisfy the axioms \eqref{it:1-Mackey-fun}-\eqref{it:1-Mackey-formula} of \Cref{Def:ordinary-MF}.
The additivity axiom \eqref{it:1-Mackey-add} is immediate from \Cref{Rem:autom-add}.
Isomorphism invariance \eqref{it:1-Mackey-iso} is an easy consequence of \Cref{Def:coherent-pair}~\eqref{it:family-2}.

Let us check~\eqref{it:1-Mackey-fun}.
For $K\overset{j}{\to} H \overset{i}{\to} G$ in~$\JJ$ and $g\in M(K)=\MM(K)(X_K, Y_K)$, the following diagram (where $\eta = \reta$ and $\varepsilon = \leps$) commutes:
\[
\xymatrix@C=18pt@R=1.8em{
i_*X_H \ar[d]_{\simeq}^{i_*\lambda_i}
  \ar[r]^-{i_*\eta} &
i_*j_*j^* X_H
  \ar[d]|{i_*j_*j^*\lambda_i} &
i_*j_* X_K
  \ar[l]_-{i_*j_*\lambda_j}^-{\simeq}
  \ar[r]^-{i_*j_* g}
  \ar@{=}[d] &
i_*j_* Y_K
  \ar@{=}[d] &
i_*j_*Y_H
  \ar[l]_-{i_*j_*\rho_j}^-{\simeq}
  \ar[r]^-{i_*\varepsilon} &
i_*Y_H \\
i_*i^* X_G
  \ar[r]^-{i_* \eta i^*} &
 i_*j_*j^*i^* X_G
  \ar[d]_{\simeq} &
i_*j_* X_K
  \ar[l]^-{i_*j_* \lambda_{ij}}_-{\simeq}
  \ar@{}[ul]|{\textrm{\eqref{it:family-fun}}}
  \ar[d]_-{\simeq} &
i_*j_* Y_K
  \ar[d]^{\simeq} &
i_*j_*j^*i^* Y_G
  \ar[l]_-{\simeq}^-{i_*j_* \rho_{ij}}
  \ar[d]^{\simeq}
  \ar[u]|{i_*j_*j^*\rho_i}
  \ar@{}[ul]|{\textrm{\eqref{it:family-fun}}}
  \ar[r]_-{i_* \varepsilon i^*} &
i_*i^* Y_G
  \ar[u]^{i_* \rho_i}_{\simeq}
  \ar[d]^\varepsilon \\
X_G
  \ar[u]^{\eta}
  \ar[r]_-{\eta} &
(ij)_* (ij)^* X_G &
(ij)_* X_K
  \ar[l]^-{(ij)_* \lambda_{ij}}
  \ar[r]_-{(ij)_* g} &
(ij)_* Y_K
  \ar[r]^-{\simeq}_-{(ij)_* \rho_{ij}} &
(ij)_*(ij)^* Y_G
  \ar[r]_-{\varepsilon} &
Y_G
}
\]
The squares marked~\eqref{it:family-fun} commute by \Cref{Def:coherent-pair}.
The top-left and top-right ones commute by the naturality of $\reta$ and $\leps$. The isomorphism $i_*j_*\overset{\sim}{\to}(ij)_*$ is the pseudo-functoriality of $(-)_*=(-)_!$ (equivalently obtained from that of $(-)^*$ by taking either right or left mates---see \Cref{Rem:rectified}), and the five remaining squares commute by its basic properties; see~\cite[A.2.10]{BalmerDellAmbrogio20}.
The two paths connecting $X_G$ to $Y_G$ around the perimeter display the identity $(i_\sbull \circ j_\sbull) (g)= (ij)_\sbull (g)$. The remaining identities are easier and left to the reader.

It only remains to check the Mackey formula~\eqref{it:1-Mackey-formula}.
We want to show that
\[
u^\sbull i_\sbull=q_\sbull p^\sbull\colon \MM(H)(X_H,Y_H)\longrightarrow \MM(K)(X_K,Y_K).
\]
Let $f\colon X_H\to Y_H$ be a morphism in~$\MM(H)$. Its image under~$u^\sbull i_\sbull$ is the composite map $X_K\to Y_K$ over the top row of the following diagram, whereas its image under $q_\sbull p^\sbull$ is the composite over the bottom row:
\[
\xymatrix@C=9pt@R=1.5em{
u^*X_G
  \ar[rr]_-{u^* \reta} &&
u^*i_*i^* X_G  &&
u^*i_* X_H
  \ar[ll]_-{\simeq}^-{u^*i_* \lambda_i}
  \ar@<-2ex>[dd]_{(\gamma^{-1})_*}
  \ar@{}[dd]|{{\simeq}}
  \ar[rr]_-{u^*i_* (f)}  &&
u^*i_* Y_H  &&
u^*i_*i^* Y_H
  \ar[ll]_-{\simeq}^-{u^*i_* \rho_i}
  \ar[rr]_-{u^* \leps} &&
u^*Y_G
  \ar[d]_{\rho_u}^-{\simeq}
\\
X_K
  \ar[u]_{\lambda_u}^-{\simeq}
  \ar[d]^{\reta}  && && && && &&  Y_K \\
q_*q^* X_P  &&
q_*X_P
  \ar[ll]_-{q_*\lambda_q}^-{\simeq}
  \ar[rr]^-{q_* \lambda_p}_-{\simeq} &&
q_*p^* X_H
  \ar[rr]^-{q_*p^* (f)}
  \ar@<-2ex>[uu]_{\gamma_!} &&
q_*p^* Y_H
  \ar[rr]^-{q_* \rho_p}_-{\simeq}
  \ar[uu]_{\gamma_!} &&
q_*Y_P &&
q_*q^* Y_P
  \ar[ll]_-{q_*\rho_q}^-{\simeq}
  \ar[u]^{\leps}
}
\]
We insert in this diagram the 2-Mackey isomorphism $(\gamma\inv)_*=(\gamma_!)^{-1}\colon u^*i_* \isoTo q_*p^*$ of~\eqref{eq:2-Mack}. Since $\gamma_!$ is natural, the middle square above commutes.
It remains to show that the left and right heptagons commute.
The left heptagon is the perimeter of the following commutative diagram (where $\eta = \reta$ and $\varepsilon = \reps$):
\[
\xymatrix@C=36pt{
X_K
  \ar[d]_\eta
  \ar[r]^-{\lambda_u} &
u^* X_G
  \ar[d]_\eta
  \ar[r]^-{u^* \eta} &
u^*i_*i^* X_G
  \ar[d]_\eta &
u^*i_* X_H
  \ar[d]_\eta
  \ar[l]^-{\simeq}_-{u^*i_* \lambda_i}
  \ar@/^8ex/[ddd]^{(\gamma^{-1})_*} & \\
q_*q^* X_K
  \ar[r]^-{q_*q^* \lambda_u}
  \ar@{}[dr]|(.3){\textrm{\eqref{it:family-fun}}}
  \ar@{}[dr]|(.7){\textrm{\eqref{it:family-2}}} &
q_*q^*u^* X_G
  \ar[r]^-{q_*q^*u^* \eta}
  \ar[d]^{q_* (\gamma^{-1})^* } &
q_*q^*u^*i_*i^* X_G
  \ar[d]^{q_* (\gamma^{-1})^* i_*i^*} &
q_*q^*u^*i^* X_H
  \ar[l]^-{\simeq}_-{q_*q^*u^*i_* \lambda_i}
  \ar[d]|{q_*(\gamma^{-1})^* i_*} & \\
q_*X_P
  \ar[u]^{q_*\lambda_q}_{\simeq}
  \ar[ur]|{q_* \lambda_{uq}}
  \ar[r]_-{q_*\lambda_{ip}}
  \ar@/_3ex/[dr]_-{q_* \lambda_p}
  \ar@{}[dr]|(.6){\textrm{\eqref{it:family-fun}}} &
q_*p^* i^* X_G
   \ar[r]^-{q_*p^*i^* \eta}
   \ar@{=}[dr]   &
q_*p^*i^*i_*i^* X_G
  \ar[d]^{q_*p^* \varepsilon  i^*}
  \ar@{}[dl]^(.25){(i^*\dashv i_*)}  &
q_*p^* i^* i_* X_H
  \ar[l]^-{\simeq}_-{q_*p^*i^*i^* \lambda_i }
  \ar[d]_{q_*p^* \varepsilon} & \\
&
q_*p^* X_H
  \ar[u]_{q_*p^*\lambda_i}
  \ar@/_4ex/@{=}[rr] &
q_*p^* i^* X_G &
q_*p^* X_H
  \ar[l]^-{\simeq}_-{q_*p^* \lambda_i} & \\
  & & & &
}
\]
The latter commutes by hypotheses \eqref{it:family-2} and~\eqref{it:family-fun} in \Cref{Def:coherent-pair}, a zig-zag equation for the adjunction $i^*\dashv i_*$, the definition of the mate~$(\gamma^{-1})_*$, and the naturality of various maps.
The right heptagon is analogous and is left to the reader.
\end{proof}

%
\section{Mackey 2-motives via bisets}
\label{sec:2-Mack-bisets}%

In this section we prove \Cref{Thm:new-Mackey-2-motives-intro}. Let us begin by recalling the original construction of Mackey 2-motives in~\cite{BalmerDellAmbrogio20}.

\begin{Rec} \label{Rec:Mot}
Mackey 2-motives can be constructed in four steps:
\begin{equation}
\label{eq:mot}%
\mot\colon \gpd^\op
\longrightarrow
\Span^\rfree
\longrightarrow
\Spanhatrf
\longrightarrow
\kk \Spanhatrf
\longrightarrow
(\kk \Spanhatrf)^\flat=:\Motk
\end{equation}
One begins by building the bicategory $\Span^\rfree$, where objects are finite groupoids, 1-cells $H\to G$ are \emph{right-faithful spans} in~$\gpd$
\[
H \overset{u}{\gets} P \overset{i}{\rightarrowtail} G
\]
consisting of a functor $u\colon P\to H$ and a faithful functor $i\colon P\into G$ with common source.
For 2-cells we use isomorphism classes (in the standard sense) of triples
\begin{equation}
\label{eq:2-mor-Span}%
[p,\alpha,\beta]=\quad
\vcenter{\xymatrix@C=5em@R=.5em{
& P \ar[dl]_u \ar[dr]^i \ar[dd]^-{p} & \\
G \ar@{}[r]|(.6){\alpha \SEcell} & & H \ar@{}[l]|(.6){\NEcell \beta}\\
& Q \ar[ul]^v \ar[ru]_j &
}}
\; .
\end{equation}
Horizontal composition is computed by forming Mackey squares (see \cite[Ch.\,5]{BalmerDellAmbrogio20}).
In the next step of~\eqref{eq:mot}, we enlarge $\Span^\rfree$ to a bicategory $\Spanhatrf$ by also allowing the formation of spans vertically, \ie spans of 2-cells of $\Span^\rfree$ (see \cite[Ch.\,6]{BalmerDellAmbrogio20}).
The bicategory $\Spanhatrf$ is locally \emph{semi}-additive, \ie its Hom categories admit finite biproducts and thus are canonically enriched in abelian monoids.
For the next step, we group-complete all Hom monoids of 2-cells and tensor them with~$\kk$ to obtain a $\kk$-linear bicategory $\kk \Spanhatrf$.
Finally, we define $\Motk$ to be  the block-completion $(-)^\flat$ of $\kk \Spanhatrf$ (see \Cref{Rec:b}).

At each step, we have an evident canonical pseudo-functor as pictured in~\eqref{eq:mot} above, starting with the contravariant embedding $(-)^*\colon\gpd^\op\to \Span^\rfree$ sending a functor $u\colon H\to G$ to the span $G \overset{u}{\gets} H \xto{\Id} H$ and a natural isomorphism $\alpha \colon u\Rightarrow v$ to the morphism of spans represented by the triple~$[\Id_H,\alpha,\id_{\Id_H}]$.
\end{Rec}

\begin{Warning}
Our present notations differ slightly from \cite{BalmerDellAmbrogio20}. There $\Span^\rfree$ was denoted $\Span(\gpd;\gpdf)$ or simply $\Span$, and similarly for $\Spanhatrf$.
The symbol $\kk \Spanhat$ was previously used to directly denote $\Motk$, including block-completion.
\end{Warning}

\begin{Rem} \label{Rem:more-general}
\Cref{Def:2Mack} and \Cref{Rec:Mot} work for more general `(2,1)-categories of groupoids'~$\GG$ and more general classes of faithful 1-morphisms~$\JJ$, leading to variants~$\Motk(\GG;\JJ)$ of the motivic bicategory. Everything in this article generalizes too but this will be ignored for simplicity (see Examples~\ref{Exa:linear} and~\ref{Exa:geom}).
\end{Rem}

\begin{Not} \label{Not:bimacks}
As in \cite[\S\,6.3]{BalmerDellAmbrogio20}, idempotent-complete $\kk$-linear Mackey 2-functors, together with `induction preserving' morphisms and modifications, form a 2-category here denoted by $\bicMackk$.
It is contained in a 2-category $\biMackk$ of all, non-necessarily idempotent-complete, $\kk$-linear Mackey 2-functors, which is itself contained in the 2-category
$\PsFun_\amalg (\gpd^\op, \ADD_\kk)$ of all additive (\ie coproduct-preserving) pseudo-functors, pseudo-natural transformations and modifications.
\end{Not}

Here is the universal property of Mackey 2-motives $\Motk$:

\begin{Thm}[{\cite[\S\S\,5.3 and 6.3]{BalmerDellAmbrogio20}}]
\label{Thm:UP-general}
The canonical pseudo-functors of \Cref{Rec:Mot} induce by precomposition biequivalences of 2-categories
\[
\PsFun_\kk (\kk\Spanhatrf, \ADD_\kk) \overset{\sim}{\to} \biMackk
\quad \textrm{and} \quad
\PsFun_\kk (\Motk, \ADDick) \overset{\sim}{\to} \bicMackk
\]
where $\PsFun_\kk$ denotes the 2-category of $\kk$-linear (hence additive) pseudo-functors, pseudo-natural transformations and modifications.
\end{Thm}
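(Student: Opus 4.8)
The plan is to establish the universal property one layer at a time, following the four-step factorization \eqref{eq:mot} of $\mot$, and then compose the resulting biequivalences. Only the first step, $\gpd^\op\hookrightarrow\Span^\rfree$, requires genuine work; the remaining three are formal completions with well-understood universal properties. Throughout we use that Mackey 2-functors are rectified (\Cref{Rem:rectified}), so that units and counits may be chosen to satisfy the compatibility~\eqref{eq:2-Mack}.

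\textbf{Step 1.} First I would identify which pseudo-functors $\cat G\colon\Span^\rfree\to\cat B$ ought to correspond to Mackey 2-functors: precomposition with $(-)^*\colon\gpd^\op\to\Span^\rfree$ produces a contravariant pseudo-functor $\MM=\cat G\circ(-)^*$, and conversely $\cat G$ is forced. On objects $\cat G(G)=\MM(G)$; on a span $H\xleftarrow{u}P\xrightarrow{i}G$ one must set $\cat G(H\xleftarrow{u}P\xrightarrow{i}G)=i_!\,u^*\;(=i_*\,u^*)$; and on the triples in~\eqref{eq:2-mor-Span} one uses the mate construction for the adjunctions $i_!\dashv i^*\dashv i_*$ supplied by \Mack{2}. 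The two things to verify are that this recipe is well defined on the isomorphism classes in~\eqref{eq:2-mor-Span}, and that it assembles into a genuine pseudo-functor, the compositor isomorphisms being the base-change isomorphisms of \Mack{3} attached to the Mackey squares used to compose spans. Here ambidexterity \Mack{4} and the rectified compatibility $(\gamma_!)^{-1}=(\gamma^{-1})_*$ of~\eqref{eq:2-Mack} are exactly what make the coherence pentagons commute and make $i_!u^*$ independent of the chosen representative of a Mackey square. One then checks that $\MM\mapsto\widehat{\MM}$ and $\cat G\mapsto\cat G\circ(-)^*$ are mutually quasi-inverse: for essential surjectivity, note that in $\Span^\rfree$ the span $H\xleftarrow{\id}H\xrightarrow{i}G$ is both a left and a right adjoint of $G\xleftarrow{i}H\xrightarrow{\id}H$ and that Mackey squares compose to base change, so the restriction of any pseudo-functor out of $\Span^\rfree$ automatically satisfies \Mack{2}--\Mack{4}; full faithfulness on pseudo-natural transformations and on modifications is a diagram chase, using that every 1-cell of $\Span^\rfree$ is a horizontal composite of restrictions and (co)units.

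\textbf{Steps 2--4, and the second biequivalence.} The passage $\Span^\rfree\to\Spanhatrf$ is carried out Hom-category-wise by forming spans of 2-cells; as shown in \cite[Ch.\,6]{BalmerDellAmbrogio20}, this is the universal way to make each Hom category semi-additive compatibly with horizontal composition, so that pseudo-functors out of $\Spanhatrf$ into locally semi-additive targets correspond to pseudo-functors out of $\Span^\rfree$. The passage $\Spanhatrf\to\kk\Spanhatrf$ group-completes the Hom monoids and tensors with~$\kk$, giving the free $\kk$-linear bicategory on a locally semi-additive one. Finally $\kk\Spanhatrf\to(\kk\Spanhatrf)^\flat=\Motk$ is the block-completion, whose universal property is recalled in \Cref{Rec:b}. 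Composing the four identifications, and tracking the image of the distinguished adjunctions, yields the first claimed biequivalence $\PsFun_\kk(\kk\Spanhatrf,\ADD_\kk)\overset{\sim}{\to}\biMackk$. The second biequivalence follows from the first exactly as in the proof of \Cref{Thm:UP-coh}: since $\ADDick$ is block-complete, restriction along $\kk\Spanhatrf\hookrightarrow\Motk$ identifies $\PsFun_\kk(\Motk,\ADDick)$ with the full sub-2-category of $\PsFun_\kk(\kk\Spanhatrf,\ADD_\kk)$ landing, up to equivalence, in idempotent-complete categories, which on the Mackey side is precisely $\bicMackk\subset\biMackk$.

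\textbf{Main obstacle.} The real labour is the coherence check in Step~1: that the mate-and-Mackey-square recipe for $\widehat{\MM}$ satisfies the pseudo-functor axioms, and that $\MM\mapsto\widehat{\MM}$ is 2-fully faithful on transformations and modifications. This is where rectification is indispensable---without the compatibility~\eqref{eq:2-Mack} the compositors would agree only up to a cocycle that one would have to control separately. Everything else reduces to assembling standard universal properties; a complete treatment occupies \cite[\S\S\,5.3 and 6.3]{BalmerDellAmbrogio20}.
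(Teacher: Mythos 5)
First, a remark on what you are being compared against: the paper itself gives no proof of this statement --- it is quoted from \cite[\S\S\,5.3 and 6.3]{BalmerDellAmbrogio20} --- so your sketch is really an outline of the proof given there. The overall architecture (peeling off the four steps of~\eqref{eq:mot} one universal property at a time) does match the book. But your division of labour between Steps~1 and~2 contains a genuine error. In $\Span^\rfree$ the 1-cell $i_!=(H\xleftarrow{\id}H\xrightarrow{i}G)$ is a \emph{left} adjoint of $i^*=(G\xleftarrow{i}H\xrightarrow{\id}H)$, but it is \emph{not} a right adjoint of it: the would-be unit $\reta\colon\Id_G\Rightarrow i_!i^*$ would be a map of spans from $(G=G=G)$ to $(G\leftarrow (i/i)\rightarrow G)$, that is a functor from $G$ to the iso-comma groupoid $(i/i)$ commuting with both projections, and for a proper subgroup $H<G$ no such functor exists. (Compare~\eqref{eq:(co)units}: $\reta_i$ has a genuinely backwards leg $G\Leftarrow G\times_HG$, so it only exists once one passes to $\Spanhatrf$.) Consequently your claim that the restriction along $\gpd^\op\to\Span^\rfree$ of an arbitrary pseudo-functor automatically satisfies \Mack{2}--\Mack{4} is false; it satisfies only the left-adjoint halves of \Mack{2} and \Mack{3}, and the universal property of $\Span^\rfree$ established in \cite[Ch.\,5]{BalmerDellAmbrogio20} identifies pseudo-functors out of $\Span^\rfree$ with such ``$\JJ_!$-strong'' pseudo-functors on $\gpd^\op$, not with Mackey 2-functors. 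As you use this claim to prove essential surjectivity in Step~1, that step as stated would produce a target strictly larger than $\biMackk$.

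The missing right adjoints and the ambidexterity axiom \Mack{4} are precisely what the passage $\Span^\rfree\to\Spanhatrf$ buys: taking spans of 2-cells formally adjoins the reversed 2-cells $\reta,\reps$ that make every $i_!$ into a two-sided adjoint of~$i^*$, and the correspondence with (rectified) Mackey 2-functors is established only at the $\Spanhatrf$ level, in \cite[Ch.\,6]{BalmerDellAmbrogio20}. Your description of this step as merely ``the universal way to make each Hom category semi-additive'' therefore misses its essential content --- the local semi-additivity is a by-product of ambidexterity, not the universal property that is proved or used, and it is not at all clear that a ``free local semi-additivization'' would coincide with the spans-of-spans construction. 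Once the roles of the two steps are reassigned in this way, the remainder of your outline (group-completion, $\kk$-linearization, and block-completion against the block-complete target $\ADDick$ for the second biequivalence) is correct and agrees with the cited proof.
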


\tristars

\begin{Rem}
\label{Rem:clash}%
With these reminders behind us, \Cref{Thm:new-Mackey-2-motives-intro} tells us that in the construction of Mackey 2-motives we may replace right-faithful spans of functors with right-free bisets (see \Cref{Rec:biset} for the latter).
A span $H \gets P \rightarrowtail G$, from $H$ to $G$, is \emph{right}-faithful when $P\to G$ is faithful.
However, a $G,H$-biset ${}_GS_H$, still from $H$ to~$G$, is \emph{right}-free when the $H$-action is free, not the $G$-action.
So the meticulous reader might be puzzled that we use the same decoration `$\rfree$' both for ``right-faithful'' and ``right-free'' in apparently unrelated cases.
In fact, the following key result shows that these two notions match beautifully.
\end{Rem}

\begin{Thm}
\label{Thm:Span-vs-biset}
There exists a canonical biequivalence of bicategories
\[
\xymatrix@1{
{\Span^\rfree} \ar@{}[rr]|-{\sim} \ar@<.8ex>[rr]^-{\cat R} && {\biset^\rfree} \ar@<.8ex>[ll]^-{\int}
}
\]
given by the \emph{realization bifunctor}~$\cat R$ of spans (see \Cref{Rec:realization}) and the \emph{Grothendieck construction} $\int$ on bisets (see \Cref{Rec:Groth-constr}). On objects, \ie finite groupoids, both pseudo-functors are just the identity.
\end{Thm}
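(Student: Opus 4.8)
The plan is to establish the biequivalence by showing that $\cat R$ and $\int$ are mutually quasi-inverse pseudo-functors, both of which are the identity on objects. Concretely, I would proceed in three stages: (i) spell out the two constructions on $1$- and $2$-cells and verify that each is a pseudo-functor; (ii) prove that for every pair of finite groupoids $H,G$ the induced functors on Hom-categories are mutually inverse equivalences; (iii) promote these level-wise equivalences to pseudo-natural ones and invoke the standard recognition principle, namely that a pseudo-functor which is bijective on objects and an equivalence on every Hom-category is a biequivalence.

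For stage (i) (see \Cref{Rec:realization} and \Cref{Rec:Groth-constr} for full details): to a right-faithful span $H\overset{u}{\gets}P\overset{i}{\rightarrowtail}G$ assign the $G,H$-biset sending $(y,x)\in H^\op\times G$ to the coend $\int^{p\in P}H(y,up)\times G(ip,x)$, that is, the coequalizer~\eqref{eq:coend} with ``$P$'' replacing ``$H$''; on morphisms of spans~\eqref{eq:2-mor-Span}, $\cat R$ is induced functorially. Faithfulness of $i$ is exactly what forces the right $H$-action on this coend to be free, so $\cat R$ takes values in $\biset^\rfree$. Conversely, to a right-free $G,H$-biset $S$ assign its category of elements $\int S$ (objects the triples $(y,x,s)$ with $s\in S(y,x)$), together with the two projections to $H$ (through the canonical identification $H^\op\cong H$) and to $G$; right-freeness of $S$ is exactly what makes the projection to $G$ faithful, so $\int S$ is a right-faithful span, and an equivariant map of bisets induces a functor over $H^\op\times G$, hence a morphism of spans. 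This is the precise statement of the pleasant notational coincidence flagged in \Cref{Rem:clash}. Preservation of identities is immediate: $\cat R$ of the identity span is the Hom-biset $\Id_G=G(-,-)$ (by co-Yoneda), and $\int$ of $G(-,-)$ is the twisted-arrow groupoid of $G$, canonically equivalent to the identity span.

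For stage (ii): the equivalence $\cat R\circ\int\cong\Id_{\biset^\rfree}$ is a direct consequence of the co-Yoneda (density) lemma applied in both variables: for right-free $S$ one has, naturally and equivariantly in $(y,x)$,
\[
\cat R({\int S})(y,x)\;=\;\int^{(y_0,x_0,s_0)\in\int S}H(y,y_0)\times G(x_0,x)\;\cong\;\int^{(y_0,x_0)}H(y,y_0)\times G(x_0,x)\times S(y_0,x_0)\;\cong\;S(y,x).
\]
For the other composite, the comparison functor $P\to\int(\cat R(P))$, $p\mapsto\bigl(up,\,ip,\,[\id_{up},\id_{ip}]_p\bigr)$, is compatible with both legs of the span; it is essentially surjective by the coequalizer description of $\cat R(P)$, and it is fully faithful \emph{precisely because $i$ is faithful} -- a morphism of $\int(\cat R(P))$ lying over a given $g$ in $G$ is determined by, and determines, a unique morphism of $P$ whose image under $i$ is $g$. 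Assembling the structural $2$-cells of a morphism of spans then yields $\int\circ\cat R\cong\Id_{\Span^\rfree}$. The same Yoneda bookkeeping shows that $\cat R$ is well defined on the isomorphism classes of triples~\eqref{eq:2-mor-Span} and is full and faithful on each $2$-cell Hom-set, and dually for $\int$.

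Stage (iii) is where I expect the real work to be. One must check that $\cat R$ (equivalently $\int$) carries the horizontal composition of $\Span^\rfree$ -- computed by forming Mackey squares, see \Cref{Rec:Mot} -- to the horizontal composition of $\biset^\rfree$ -- computed by the coend~\eqref{eq:coend} -- compatibly with associators and unitors, and that the unit and counit of stage (ii) are pseudo-natural in $(H,G)$. On the $\cat R$ side this is a Fubini theorem for coends: realizing the Mackey (homotopy pullback) square of two composable right-faithful spans agrees, up to a canonical isomorphism of bisets, with the iterated coend defining the composite biset. On the $\int$ side one uses the dual fact that the Grothendieck construction sends a coend of $\set$-valued functors to the corresponding pseudo-pullback of categories of elements. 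The delicate point is not any single one of these identifications -- each is a routine (co)end or category-of-elements manipulation -- but the coherence bookkeeping: one has to keep track of \emph{all} the structural $2$-isomorphisms on both sides, verify the pentagon and triangle axioms, and do so through the rigid presentation of the $2$-cells of $\Span^\rfree$ as isomorphism classes of triples~\eqref{eq:2-mor-Span}. Once these coherence isomorphisms are pinned down, pseudo-naturality of the unit and counit is immediate from the naturality already used in stage (ii), and the biequivalence follows.
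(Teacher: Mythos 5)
Your proposal is correct and its core strategy coincides with the paper's: both rest on the fact that a pseudo-functor which is bijective on objects and an equivalence on each Hom category is a biequivalence, and both establish the local equivalences via the same two ingredients, namely the comparison functor $\Phi\colon P\to\int\cat R(i_!u^*)$ and the isomorphism $S\cong\cat R(\int S)$ (these are \Cref{Lem:rf}, \Cref{Lem:ess-image-int}, \Cref{Lem:S-vs-Rint(S)} and \Cref{Prop:local-equivs}; your co-Yoneda computation of $\cat R(\int S)$ is just a slicker packaging of the explicit maps $\varphi_S,\psi_S$). Where you genuinely diverge is your stage (iii), which you identify as the real work: the paper does essentially none of it. The pseudo-functoriality of $\cat R$, including the compatibility of Mackey-square composition of spans with the coend composition of bisets, is not checked by hand but imported from \cite{Huglo19pp, DellAmbrogioHuglo20pp}, where $\cat R$ is assembled on all of $\Span$ from the two simpler pseudo-functors $\cat R^*$ and $\cat R_!$ and the adjunctions~\eqref{eq:realization-adj} via the universal property of the span bicategory; and the pseudo-functor structure on $\int$, together with the pseudo-naturality of the unit and counit, then comes for free, since the quasi-inverse of a biequivalence may be taken to have the $\int_G^H$ as its local functors. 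So your Fubini-for-coends plus coherence bookkeeping would work, but it is redundant for $\int$ and avoidable for $\cat R$; the paper's route buys a much shorter proof at the cost of an external reference. One small imprecision to fix: the comparison functor $\Phi$ is full and essentially surjective for \emph{every} span, and what faithfulness of $i$ buys is \emph{joint} faithfulness of $(u,i)$, which is the exact criterion for $\Phi$ to be faithful (\Cref{Lem:ess-image-int}); your parenthetical justification reads as if fullness also required $i$ faithful, which it does not.
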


\begin{Rem}
Results closely related to the above one have long been known among some category-theorists (see \eg \cite{Benabou00}) and topologists (see \eg \cite{Miller17}).
\end{Rem}

\begin{proof}[Proof of \Cref{Thm:new-Mackey-2-motives-intro}]
The biequivalence between $\Motk = (\kk \Spanhatrf)^\flat$ and $(\kk \bisethatrf)^\flat$ is easily obtained from that of \Cref{Thm:Span-vs-biset} by changing both sides as follows:
\begin{enumerate}[\rm(1)]
\item take ordinary categories of spans $\cat C\mapsto \widehat{\cat C}$ of all Hom categories (\cite[A.4]{BalmerDellAmbrogio20});
\item group-complete every Hom abelian monoid of 2-cells;
\item extend scalars from $\bbZ$ to~$\kk$;
\item and finally, take block-completions~$(-)^\flat$.
\end{enumerate}
Each operation is sufficiently bifunctorial to preserve biequivalences.
\end{proof}

In order to prove \Cref{Thm:Span-vs-biset}, we first detail the constructions of $\cat R$ and~$\int$.

\begin{Rec}[The realization pseudo-functor]
\label{Rec:realization}
Let us first consider the bicategory $\Span:=\Span(\gpd)$ of \emph{all}, not necessarily right-faithful, spans between finite groupoids, as well as the bicategory $\biset$ of \emph{all}, not necessarily right-free, bisets (\Cref{Rec:biset}). By \cite[\S4.2]{Huglo19pp} or~\cite{DellAmbrogioHuglo21}, there is a pseudo-functor
\begin{equation} \label{eq:full-realization}
\cat R \colon \Span \too \biset
\end{equation}
which sends a finite groupoid $G$ to itself, a span of functors $i_!u^*:=( H \xleftarrow{u} P \xto{i} G)$ to the composite biset
\[
\cat R(i_!u^*)
:= \underbrace{G(i-,-)}_{{\cat R_!(i)}} \times_P \underbrace{H(-,u-)}_{{\cat R^*(u)}} \colon H^\op\times G \too \set
\]
and morphisms of spans~\eqref{eq:2-mor-Span} to the naturally induced morphism of bisets.
Note that $\cat R$ is obtained by the universal property of $\Span$ (see \cite[\S\,5.2]{BalmerDellAmbrogio20} or \cite[Thm.\,5.4]{DellAmbrogioHuglo21}) by `gluing' the two more evident pseudo-functors
\[ \cat R^*\colon \gpd^\op\too \biset \quad \textrm{ and } \quad \cat R_!\colon \gpd^\co \too \biset \]
which map a functor $v\colon P\to Q$ to the biset $\cat R^*(v)=Q(-,v-)\colon Q^\op\times P\to \set$, respectively
to the biset $\cat R_!(v)= Q(v-,-) \colon P^\op \times Q\to \set$.
This gluing is possible because in $\gpd$ there are (well-behaved) adjunctions for every $v\colon P\to Q$
\begin{equation}
\label{eq:realization-adj}%
\vcenter{
\xymatrix@R=2em{
P
\ar@/_2ex/[d]_{\cat R_!(v)\,=\,Q(v-,-)}
\ar@/^2ex/@{<-}[d]^{Q(-,v-)\,=\,\cat R^*(v)}
 \ar@{}[d]|{\dashv} \\
Q
}}
\end{equation}
with unit $\eta\colon \Id_P \Rightarrow \cat R^*(v) \circ \cat R_! (v) $ and counit $\varepsilon\colon \cat R_!(v)\circ \cat R^*(v)\Rightarrow \Id_Q$  given by
\begin{align*}
\eta_{x,x'} \colon & \;\; P(x,x') \too Q(z,v(x'))\times_{z\in Q} Q(v(x),z)\,, &&  p \mapsto  [\id_{v(x')}, v(p)] \\
\varepsilon_{y,y'}  \colon & \;\; Q(v(z),y')\times_{z\in P} Q(y,v(z)) \too Q(y,y') \,, &&  [ q_1, q_2 ]  \mapsto q_1 q_2
\end{align*}
for all $x,x' \in \Obj(P)$ and $y,y' \in \Obj(Q)$.

Note that the \emph{data} of the pseudo-functor $\cat R$ is entirely determined by the data of the above pseudo-functors $\cat R_!$ and $\cat R^*$ and the adjunctions $(\cat R_!(v), \cat R^*(v), \eta, \varepsilon)$.
\end{Rec}

\begin{Rem}
The realization pseudo-functor \eqref{eq:full-realization} is not a biequivalence, as can already be see at the level of truncated 1-categories.
Indeed, the resulting functor $\pih (\cat R)\colon \pih(\Span)\to \pih(\biset)$ is full but not faithful, and its kernel admits a nice description due to Ganter and Nakaoka (see \cite[\S6]{DellAmbrogio22a}).
In order to get a biequivalence, we must restrict both its domain and codomain.
\end{Rem}

\begin{Lem} \label{Lem:rf}
Let $i_!u^*=(H \xleftarrow{u} P \xrightarrow{i} G)$ be a span of finite groupoids.
If the functor $i$ is faithful, then the biset $\cat R(i_!u^*)$ is right-free, \ie $H$ acts freely on it.
\end{Lem}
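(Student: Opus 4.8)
The plan is to unwind the explicit coend description of the composite biset and then do a short computation in the groupoids. First I would recall from \Cref{Rec:realization} that $\cat R(i_!u^*)$ is the composite $\cat R_!(i)\times_P\cat R^*(u)$ of the $G,P$-biset $\cat R_!(i)=G(i-,-)$ with the $P,H$-biset $\cat R^*(u)=H(-,u-)$. Thus, by~\eqref{eq:coend}, its value at an object $(y,x)\in H^\op\times G$ is the set of equivalence classes $[t,s]$ of pairs $(t,s)\in G(iz,x)\times H(y,uz)$ indexed by the objects $z\in\Obj P$, where the class $[t,s]$ at $z$ equals the class $[t',s']$ at $z'$ if and only if there is a morphism $p\in P(z,z')$ with $t=t'\circ ip$ and $s'=up\circ s$. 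In this description the right $H$-action is precomposition in the second coordinate: $[t,s]\cdot h=[t,s\circ h]$ for $h\in H(y',y)$.

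Next I would test freeness of this right action. Suppose $[t,s]\cdot h=[t,s]$ for some $h$; since the left-hand side lives over $y'$ and the right-hand side over $y$, necessarily $y'=y$ and $h$ is an automorphism of $y$. Writing out the equality $[t,s\circ h]=[t,s]$ of classes both indexed by $z$, the relation above produces a morphism $p\in P(z,z)$ with $t=t\circ ip$ and $s=up\circ s\circ h$.

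The conclusion is then pure groupoid arithmetic. Every morphism of a groupoid is invertible, so cancelling the isomorphism $t\in G(iz,x)$ on the left in $t=t\circ ip$ gives $ip=\id_{iz}=i(\id_z)$; since $i$ is \emph{faithful}, this forces $p=\id_z$. Substituting back yields $s=u(\id_z)\circ s\circ h=s\circ h$, and cancelling the isomorphism $s\in H(y,uz)$ on the left gives $h=\id_y$. Hence the right $H$-action is free, i.e.\ $\cat R(i_!u^*)$ is right-free, as claimed.

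I do not anticipate a genuine obstacle here: the argument is a direct verification and the faithfulness hypothesis is used at exactly one point. The only thing requiring care is the bookkeeping in the coend $\cat R_!(i)\times_P\cat R^*(u)$ — keeping straight which factor plays which role and the directions of the various left and right $P$- and $H$-actions — so that the equivalence relation defining the classes $[t,s]$ is applied correctly.
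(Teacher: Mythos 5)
Your proof is correct and follows essentially the same route as the paper's: unwind the coend description of $\cat R_!(i)\times_P\cat R^*(u)$, extract the mediating morphism $p\in P(z,z)$ from the equality of classes, cancel the invertible $G$-component to get $i(p)=\id_{iz}$, invoke faithfulness of $i$ to conclude $p=\id_z$, and cancel the invertible $H$-component to get $h=\id_y$. The bookkeeping of the equivalence relation matches the paper's conventions, so there is nothing to add.
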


\begin{proof}
By definition, the biset $\cat R(i_!u^*)$ is right-free if and only if for every objects $(y,x)\in \Obj(H^\op\times G)$ and $y'\in \Obj(H)$, every element $[g, h]\in \cat R(i_!u^*)(y,x)$ and every morphism $t\in H(y',y)$, we have that $[g,h]\cdot t=[g, h]$ implies $t=\id_y$ (note that we must already have $y'=y$ for the first equation to make sense).
Here $g \in G(iz,x)$ and $h \in H(y,uz)$ for some $z\in \Obj(P)$ and $[g, h]\cdot t= [g , ht]$ by definition.
Thus the equation $[g,h]\cdot t=[g, h]$ means that there exists some map $p\in P(z,z)$ such that $g \circ i(p) = g$ and $u(p)\circ h = h\circ t$.
As $G$ is a groupoid, the first equation entails $i(p)=g^{-1}g = \id_{i(z)}$.
If $i$ is faithful, the latter implies that $p=\id_y$ and thus, by the second equation $u(p) h = h t$ we get $t = h^{-1}h = \id_y$ as wished.
\end{proof}

In the other direction, we use the following construction:

\begin{Rec}[Grothendieck construction]
\label{Rec:Groth-constr}
Fix two groupoids $H$ and $G$. For any $G,H$-biset $S\in \biset (H,G)$, we can define a groupoid denoted
\[
\textstyle\int_G^H S
\quad \textrm{ or simply } \quad
\textstyle\int S
\]
whose objects are triples $(y,x,s)$ with $y\in \Obj H^\op$, $x\in \Obj G$ and $s\in S(y,x)$.
A morphism $(y,x,s)\to (y',x',s')$ in $\int S$ is a pair of morphisms $(h,g)$ with $h\in H(y,y')$ and $g\in G(x,x')$, such that $g \cdot s = s' \cdot h $ holds.
This comes equipped with obvious projection functors $\pr_H\colon \int S\to H $ and $ \pr_G \colon \int S\to G$, sending $(y,x,s)$ and $(h,g)$ to $y$ and~$h$, respectively to $x$ and~$g$.
In other words, we obtain a span from $H$ to~$G$
\[
\xymatrix@R=5pt{
&{\int S} \ar[dl]_{\pr_H} \ar[dr]^{\pr_G}& \\
H \ar@{..>}[rr] && G.
}
\]
\end{Rec}

\begin{Rem}
In the case of a group~$G$, the Grothendieck construction is often denoted $G\ltimes S$ and called  \emph{transport groupoid} or \emph{action groupoid} (\cf \Cref{Rem:crossed-prod-inv}).
\end{Rem}

\begin{Lem} \label{Lem:Groth-constr-fun}
The construction in \Cref{Rec:Groth-constr} defines a functor
\[
\textstyle\int := \textstyle\int_G^H \colon \biset(H,G) \too \Span(H,G)
\]
for every pair of groupoids $H,G$, by mapping a natural transformation  $\varphi\colon S\Rightarrow T$ of bisets $S,T\colon H^\op\times G\to \set$ to the morphism of spans
\[
[\textstyle{\int}\varphi,\id,\id]=\quad\vcenter{
\xymatrix@R=5pt{
&{\int S} \ar[dd]_{\int \varphi} \ar[dl]_{\pr_H} \ar[dr]^{\pr_G}& \\
H  && G \\
& {\int T} \ar[ul]^{\pr_H} \ar[ur]_{\pr_G} &
}}
\]
where both triangles commute and the functor $\int \varphi \colon \int S\to \int T$ sends an object $(y,x,s)$ to $(y,x,\varphi(s))$ and a map $(h,g)$ to $(h,g)$.
\end{Lem}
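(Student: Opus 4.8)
The plan is to check, in turn, the three things that make $\int$ a functor: that $\int S$ is indeed an object of $\Span(H,G)$ (this is recorded in \Cref{Rec:Groth-constr}), that $\varphi\mapsto[\int\varphi,\id,\id]$ is a well-defined 2-cell of $\Span(H,G)$, and that this assignment preserves identities and composition. One may note in passing that $\int S$ really is a groupoid, since any morphism $(h,g)$ of $\int S$ is invertible: $h$ and $g$ are invertible in $H$ and $G$ respectively, and from $g\cdot s=s'\cdot h$ one gets $g\inv\cdot s'=s\cdot h\inv$, so that $(h\inv,g\inv)$ is an inverse of $(h,g)$.

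First I would verify that $\int\varphi\colon\int S\to\int T$ is a functor. The only real issue is that it sends morphisms to morphisms. Let $(h,g)$ be a morphism $(y,x,s)\to(y',x',s')$ of $\int S$, so $h\in H(y,y')$, $g\in G(x,x')$ and $g\cdot s=s'\cdot h$ (an equality in $S(y,x')$, with the action conventions of \Cref{Rec:biset}). Since $\varphi\colon S\Rightarrow T$ is a natural transformation of functors on $H^\op\times G$, each component $\varphi_{y,x}$ is compatible with the left $G$- and right $H$-actions; applying this to the equation above yields
\[
g\cdot\varphi_{y,x}(s)=\varphi_{y,x'}(g\cdot s)=\varphi_{y,x'}(s'\cdot h)=\varphi_{y',x'}(s')\cdot h ,
\]
which says exactly that $(h,g)$ is a morphism $(y,x,\varphi_{y,x}(s))\to(y',x',\varphi_{y',x'}(s'))$ of $\int T$. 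As $\int\varphi$ is literally the identity on the morphism-component $(h,g)$, it automatically preserves identities and composites, hence is a functor. Moreover $\pr_H\circ\int\varphi=\pr_H$ and $\pr_G\circ\int\varphi=\pr_G$ hold \emph{strictly}, since $\int\varphi$ leaves the first two components of objects, as well as all morphisms, untouched. Thus the two triangles in the statement commute on the nose, and $[\int\varphi,\id,\id]$ is a genuine 2-cell of $\Span(H,G)$ in the sense of~\eqref{eq:2-mor-Span}, with both of its 2-isomorphisms equal to identities.

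Next I would check functoriality of $\int$ itself. For the identity $\id_S$, each component is $\id_{S(y,x)}$, so $\int(\id_S)$ is the identity functor of $\int S$ and $[\int(\id_S),\id,\id]$ is the identity 2-cell of the object $\int S$. For a composite $\psi\circ\varphi\colon S\Rightarrow T\Rightarrow R$, the functor $\int(\psi\circ\varphi)$ sends $(y,x,s)$ to $(y,x,\psi_{y,x}(\varphi_{y,x}(s)))$ and is the identity on morphisms, so $\int(\psi\circ\varphi)=\int\psi\circ\int\varphi$ as functors $\int S\to\int R$. Recalling that vertical composition of 2-cells in $\Span$ composes the middle functors and pastes the accompanying 2-isomorphisms (see \cite[Ch.\,5]{BalmerDellAmbrogio20}), and that here all of those 2-isomorphisms are identities, we obtain $[\int\psi,\id,\id]\circ[\int\varphi,\id,\id]=[\int\psi\circ\int\varphi,\id,\id]=[\int(\psi\circ\varphi),\id,\id]$, as needed.

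I do not expect any genuine obstacle here: the argument is routine bookkeeping. The only points requiring a little care are the variance conventions for $S\colon H^\op\times G\to\set$ and the placement of the left and right actions (so that the equivariance of $\varphi$ is invoked in the correct variable), together with the precise recipe for composition of 2-cells in $\Span$. Finally, since 2-cells of $\Span(H,G)$ are taken up to isomorphism while $\int$ lands among strictly commuting triples with identity 2-isomorphisms, passing to isomorphism classes is harmless.
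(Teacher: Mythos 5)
Your proof is correct and fills in exactly the routine verification that the paper dismisses with ``Straightforward verification'': the key point is indeed that naturality of $\varphi$ in both variables of $H^\op\times G$ translates the condition $g\cdot s=s'\cdot h$ into $g\cdot\varphi(s)=\varphi(s')\cdot h$, and everything else (strict commutativity of the triangles, preservation of identities and composites, passage to isomorphism classes of 2-cells) is bookkeeping, as you say. No issues.
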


\begin{proof}
Straightforward verification.
\end{proof}

\begin{Lem} \label{Lem:ess-image-int}
A span $H \xleftarrow{u} P \xrightarrow{i} G$ belongs to the essential image of $\int_G^H$ if and only if it is \emph{jointly faithful}, \ie the functor $(u,i)\colon P\to H\times G$ is faithful. (For instance, the latter holds if $i\colon P\to G$ is faithful, \ie if the span is right-faithful.)
\end{Lem}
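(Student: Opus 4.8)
The plan is to prove the two implications directly; the forward direction (``in the essential image $\Rightarrow$ jointly faithful'') is immediate, and the real content is the converse. For the easy direction: for \emph{any} $G,H$-biset $S$ the span $\int_G^H S=(H\xleftarrow{\pr_H}\int S\xrightarrow{\pr_G}G)$ has the feature that the combined functor $(\pr_H,\pr_G)\colon\int S\to H\times G$ sends a morphism $(h,g)$ of $\int S$ to the pair $(h,g)$ itself; hence two parallel morphisms of $\int S$ with the same image in $H\times G$ are literally equal, so $(\pr_H,\pr_G)$ is faithful. As ``jointly faithful'' is manifestly stable under replacing the apex of a span by an equivalent groupoid over $H\times G$, every span in the essential image of $\int_G^H$ is jointly faithful. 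The parenthetical assertion is then clear: if $i\colon P\to G$ is faithful it already separates parallel morphisms of $P$, so $(u,i)$ does too.

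For the converse, let $H\xleftarrow{u}P\xrightarrow{i}G$ be jointly faithful and set $S:=\cat R(i_!u^*)$, the $G,H$-biset produced by the realization pseudo-functor of \Cref{Rec:realization}: concretely $S=G(i-,-)\times_P H(-,u-)$, so that $S(y,x)$ consists of classes $[g,h]_z$ of pairs $(g,h)\in G(iz,x)\times H(y,uz)$ for $z\in\Obj P$, with $[g_1,h_1]_z=[g_2,h_2]_{z'}$ exactly when some $q\in P(z,z')$ satisfies $g_1=g_2\circ i(q)$ and $h_2=u(q)\circ h_1$, and with actions $g'\cdot[g,h]_z=[g'g,h]_z$ and $[g,h]_z\cdot h'=[g,hh']_z$. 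I would then introduce the comparison functor $\Phi\colon P\to\int S$ given by $z\mapsto(uz,iz,[\id_{iz},\id_{uz}]_z)$ on objects and $p\mapsto(u(p),i(p))$ on morphisms. A one-line check with the coend relation (taking $q=p$) shows $\Phi(p)$ really is a morphism $\Phi(z)\to\Phi(z')$ of $\int S$, and by construction $\pr_H\circ\Phi=u$ and $\pr_G\circ\Phi=i$ on the nose, so that $[\Phi,\id,\id]$ is a $2$-cell in $\Span(H,G)$ from the given span to $\int S$ (\Cref{Lem:Groth-constr-fun}).

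It remains to see that $\Phi$ is an equivalence of groupoids, for then this $2$-cell is invertible and the span is isomorphic to $\int S$ in $\Span(H,G)$, i.e.\ lies in the essential image. Essential surjectivity and fullness hold for \emph{any} span: any object $(y,x,[g,h]_z)$ of $\int S$ is carried to $\Phi(z)$ by the isomorphism $(h^{-1},g)$, with inverse $(h,g^{-1})$, using that $H$ and $G$ are groupoids; and a morphism $(h,g)\colon\Phi(z)\to\Phi(z')$ is precisely the datum of an equality $[g,\id_{uz}]_z=[\id_{iz'},h]_{z'}$ in $S(uz,iz')$, whose witness (a single $q\in P(z,z')$, since $P$ is a groupoid) satisfies $i(q)=g$ and $u(q)=h$, i.e.\ $\Phi(q)=(h,g)$. \emph{Faithfulness of $\Phi$ is the one and only place where the hypothesis is used}: $\Phi(p)=\Phi(p')$ forces $u(p)=u(p')$ and $i(p)=i(p')$, whence $p=p'$ by joint faithfulness of $(u,i)$. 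I do not expect a genuine obstacle here — the argument is pure coend bookkeeping — and the conceptual upshot is that $\int\cat R$ applied to a span always produces a full, essentially surjective comparison functor onto its apex, which is moreover faithful exactly for jointly faithful spans. (Combined with \Cref{Lem:rf}, this also recovers that right-faithful spans correspond to right-free bisets, one half of \Cref{Thm:Span-vs-biset}.)
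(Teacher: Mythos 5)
Your proof is correct and follows essentially the same route as the paper's: the easy direction via the observation that $(\pr_H,\pr_G)$ is literally injective on morphisms, and the converse via the comparison functor $\Phi\colon P\to\int\cat R(i_!u^*)$, which is always full and essentially surjective and is faithful exactly when $(u,i)$ is. The paper's proof is the same construction with the same verifications, concluding via \cite[Lem.\,5.1.12]{BalmerDellAmbrogio20} that the resulting $2$-cell is an isomorphism of spans.
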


\begin{proof}
First notice that $(\pr_H,\pr_G)\colon \int S\to H\times G$ is (trivially!) jointly faithful for any biset~$S$, and that the property of being jointly faithful is stable under taking isomorphic spans.
Conversely, let $i_!u^*=(H \xleftarrow{u} P \xrightarrow{i} G)$ be any span. There is a canonical morphism of spans as follows
\begin{equation}
\label{eq:aux-Phi}%
[\Phi,\id,\id]=\qquad
\vcenter{\xymatrix@R=5pt{
&P \ar[dd]_{\Phi} \ar[dl]_{u} \ar[dr]^{i}& \\
H  && G \\
& {\int \cat R(i_!u^*)} \ar[ul]^(.6){\pr_H} \ar[ur]_(.6){\pr_G} &
}}
\end{equation}
where the functor
$ \Phi:=  \Phi_{i_!u^*} \colon P\too \textstyle\int \cat R(i_!u^*) $
sends an object
$z$ in~$P$ to the object $(u(z), i(z), [\id_{i(z)} , \id_{u(z)}]) $ in $\int_G^H \big( G(i - , -) \times_P H(-, u-)\big)$, and maps a morphism
$p\in P(z,z')$ to the pair $(u(p),i(p))$; the latter defines a morphism $(uz,iz, [\id, \id])\to (uz',iz', [\id,\id])$ in $\int i_!u^*$, as required, since
\[
i(p) \cdot [\id_{iz} , \id_{uz}] = [i(p), \id_{uz}] = [\id_{iz'} , u(p)] = [\id_{iz'}, \id_{uz'}] \cdot u(p) .
\]
Clearly $\Phi$ is a functor such that $\pr_G\circ \Phi = i$ and $\pr_H\circ \Phi = u$.
It is always full: Given \emph{any} morphism $(h,g)\colon (u(z), i(z), [\id,\id]) \to (u(z'), i(z'), [\id,\id])$ in the target groupoid, that is a $g\in G(iz,iz')$ and an $h\in H(uz,uz')$ such that
\[
g\cdot [ \id_{iz} , \id_{uz}]
\overset{\textrm{def.}}{=} [g , \id_{uz}]
= [\id_{iz'}, h]
\overset{\textrm{def.}}{=} [\id_{iz'}, \id_{uz'}] \cdot h
\]
in $\cat R(i_!u^*)(z,z') =G(i-,iz')\times_P H(uz,u-)$, by definition this means that there exists some $p\in P(z,z')$ such that $\id \circ i(p) = g$ and $u(p) \circ \id = h$, that is:
$(h,g) = (u(p), i(p))= \Phi(p)$.
The functor $\Phi$ is also always essentially surjective: Given \emph{any} object $(y, x, s)$ with $y\in \Obj H$, $x\in \Obj G$ and $s= [g \in G(iz,x), h \in H(y,uz)]$ at some $z\in \Obj Z$, the pair $(h^{-1} , g)$ defines an isomorphism
\[
\Phi(z)= \big( u(z), i(z), [\id_{i(z)} , \id_{u(z)}] \big)
\overset{\sim}{\too}
\big( y , x , [g,h] \big)
\]
because $g \cdot [\id,\id] = [g, h] \cdot h^{-1}$.
Finally, it is easy to see that $\Phi$ is faithful precisely when $(u,i)\colon P\to H\times G$ is faithful.

In short, $i_!u^*$ is jointly faithful if and only if $\Phi$ is an equivalence of groupoids, if and only if the morphism $[\Phi,\id,\id]$ in~\eqref{eq:aux-Phi} defines an isomorphism of spans $i_!u^*\xrightarrow{\sim} \int \cat R(i_!u^*)$, by \cite[Lem.\,5.1.12]{BalmerDellAmbrogio20}. The statement follows.
\end{proof}

\begin{Lem} \label{Lem:S-vs-Rint(S)}
There is a canonical isomorphism $\varphi_S\colon S \overset{\sim}{\Rightarrow} \cat R\big(\int_G^H S\big)$ for every $G,H$-biset $S\colon H^\op\times G\to \set$.
\end{Lem}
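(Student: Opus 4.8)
The statement is entirely concrete, so the plan is to write down the natural transformation $\varphi_S$ explicitly on components and to check by hand that each component is a bijection of sets compatible with the two group(oid) actions. First I would unwind the two sides. By \Cref{Rec:Groth-constr} the span $\int_G^H S$ is $H\xleftarrow{\pr_H}\int S\xrightarrow{\pr_G}G$, and by \Cref{Rec:realization} its realization is the $G,H$-biset
\[
\cat R\Big(\textstyle\int_G^H S\Big)=G(\pr_G-,-)\times_{\int S}H(-,\pr_H-),
\]
whose value at $(y,x)\in\Obj(H^\op\times G)$ is the coend (see \Cref{Rec:biset}) whose elements are the classes $[g,h]$ of pairs $g\in G(x_0,x)$, $h\in H(y,y_0)$ indexed by an object $z=(y_0,x_0,s_0)$ of $\int S$, with $[g,h]_z=[g',h']_{z'}$ exactly when there is a morphism $p\colon z\to z'$ in $\int S$ satisfying $g=g'\circ\pr_G(p)$ and $\pr_H(p)\circ h=h'$. (Since $\int S$ is a groupoid, this is a genuine equivalence relation.)

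Next I would define, for every object $(y,x)$ of $H^\op\times G$ and every $s\in S(y,x)$,
\[
\varphi_{S}(s):=\big[\,\id_x\,,\,\id_y\,\big]\ \text{at the object}\ (y,x,s)\in\textstyle\int S,
\]
and verify the three points. \emph{(i) $\varphi_S$ is equivariant}, i.e.\ a morphism of bisets: for $g\in G(x,x')$ and $h\in H(y',y)$ one computes that $\cat R(\int S)(h,g)$ sends $[\id_x,\id_y]_{(y,x,s)}$ to $[g,h]_{(y,x,s)}$, while the morphism $(h^{-1},g)\colon (y,x,s)\to(y',x',g\cdot s\cdot h)$ of $\int S$ exhibits $[g,h]_{(y,x,s)}=[\id_{x'},\id_{y'}]_{(y',x',g\cdot s\cdot h)}=\varphi_S(g\cdot s\cdot h)$; hence the relevant naturality square commutes. \emph{(ii) Surjectivity of each component}: any class $[g,h]_{(y_0,x_0,s_0)}$ equals $\varphi_S\big((g\cdot s_0)\cdot h\big)$, again via a morphism $(h^{-1},g)$ of $\int S$. \emph{(iii) Injectivity of each component}: if $[\id_x,\id_y]_{(y,x,s)}=[\id_x,\id_y]_{(y,x,s')}$, then the witnessing morphism $p$ of $\int S$ is forced to satisfy $\pr_G(p)=\id_x$ and $\pr_H(p)=\id_y$, hence $p=(\id_y,\id_x)$, and the very definition of a morphism of $\int S$ then gives $s=s'$.

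Finally I would record the word \emph{canonical}: the family $(\varphi_S)_S$ is natural in $S$. Indeed, for an equivariant map $\psi\colon S\Rightarrow T$ the induced functor $\int\psi\colon\int S\to\int T$ of \Cref{Lem:Groth-constr-fun} sends $(y,x,s)$ to $(y,x,\psi(s))$, so $\cat R(\int\psi)$ sends $[\id_x,\id_y]_{(y,x,s)}$ to $[\id_x,\id_y]_{(y,x,\psi(s))}=\varphi_T(\psi(s))$; thus $\varphi$ is a natural isomorphism from the identity functor of $\biset(H,G)$ to $\cat R\circ\int_G^H$. I do not expect any real obstacle here: the only delicate point is bookkeeping — keeping straight the contravariance in $H$ and the order of composition in the two hom-functors $G(\pr_G-,-)$ and $H(-,\pr_H-)$ that define the coend — so I would fix all conventions at the outset, after which the verifications are short computations of exactly the kind already performed in the proof of \Cref{Lem:ess-image-int}.
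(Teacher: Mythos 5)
Your proposal is correct and follows essentially the same route as the paper: the paper defines the very same component map $s\mapsto[\id_x,\id_y]$ at $(y,x,s)$ and simply writes down the explicit inverse $[u_1,v_1]_{(y_1,x_1,s_1)}\mapsto u_1\cdot s_1\cdot v_1$, leaving the verifications to the reader. Your surjectivity computation exhibits exactly that inverse, and your injectivity/equivariance checks are the verifications the paper omits.
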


\begin{proof}
Define $\varphi_S$ by setting its component at $(y,x)\in H^\op \times G$ to be the map
\begin{align*}
\varphi_{S,y,x}  \colon  S(y,x) & \too G(\pr_G - , x) \times_{\int S} H (y , \pr_H - )  \\
  s \;\;\; & \mapsto \;\;\; [\id_x , \id_y ] \textrm{ at } (y,x,s) \in \Obj(\textstyle{\int} S)
\end{align*}
Its inverse, say~$\psi_S$, has components given at each $(y,x)$ as follows:
\begin{align*}
\psi_{S,y,x} \colon  G(\pr_G - , x) \times_{\int S} H (y , \pr_H - ) & \too S(y,x) \\
[u_1 \in G(x_1,x),v_1 \in H(y,y_1)] \textrm{ at } (y_1,x_1,s_1) \;\; & \mapsto \;\; u_1\cdot s_1 \cdot v_1
\end{align*}
We leave to the reader the straightforward verifications that $\varphi_S$ and $\psi_S$ are well-defined, mutually inverse natural transformations.
\end{proof}

\begin{Prop}
\label{Prop:local-equivs}
For every pair $H,G$ of finite groupoids, the realization pseudo-functor of \Cref{Rec:realization} and the Grothendieck construction of \Cref{Lem:Groth-constr-fun} restrict to an equivalence of Hom categories
\[
\xymatrix@1{
\Span^\rfree (H,G) \ar@<.8ex>[rr]_-{\sim}^-{\cat R_{H,G}} && \biset^\rfree (H,G) \ar@<.8ex>[ll]^-{\int_G^H}
} .
\]
\end{Prop}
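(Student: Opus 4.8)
The plan is to deduce this local equivalence directly from the three lemmas \Cref{Lem:rf}, \Cref{Lem:ess-image-int} and \Cref{Lem:S-vs-Rint(S)}, so that essentially all that remains is to check that the two pseudo-functors land in the right sub-Hom-categories and that the isomorphisms already produced there are natural.

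First I would verify that $\cat R_{H,G}$ and $\int_G^H$ restrict as claimed. For the realization pseudo-functor this is exactly \Cref{Lem:rf}: a right-faithful span $H\xleftarrow{u}P\xrightarrow{i}G$ has $i$ faithful, so $\cat R(i_!u^*)$ is right-free; since $\cat R$ is a pseudo-functor its component $\cat R_{H,G}$ is a functor on Hom categories, and it thus restricts to $\Span^\rfree(H,G)\to\biset^\rfree(H,G)$. For the Grothendieck construction of \Cref{Lem:Groth-constr-fun} I would observe that if $S$ is right-free then $\pr_G\colon\int S\to G$ is faithful: two morphisms $(h_1,g),(h_2,g)\colon(y,x,s)\to(y',x',s')$ of $\int S$ with the same $G$-component satisfy $s'\cdot h_1=g\cdot s=s'\cdot h_2$, and right-freeness of $S$ forces $h_1=h_2$. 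Hence $\int S\in\Span^\rfree(H,G)$, and every $\int\varphi$ is automatically a morphism there, so $\int_G^H$ restricts to $\biset^\rfree(H,G)\to\Span^\rfree(H,G)$.

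Next I would exhibit the two natural isomorphisms witnessing the equivalence. For $\cat R_{H,G}\circ\int_G^H\cong\Id$ this is precisely \Cref{Lem:S-vs-Rint(S)}: the isomorphisms $\varphi_S\colon S\isoto\cat R(\int_G^H S)$ are natural in $S$, which one checks on components using the explicit formulas for $\varphi_S$ and its inverse $\psi_S$. For $\int_G^H\circ\cat R_{H,G}\cong\Id$ I would invoke the proof of \Cref{Lem:ess-image-int}: for every right-faithful span $i_!u^*$ it already exhibits the canonical morphism of spans $[\Phi_{i_!u^*},\id,\id]$ of \eqref{eq:aux-Phi} and shows that $\Phi_{i_!u^*}$ is full, essentially surjective, and --- since $i$, hence $(u,i)\colon P\to H\times G$, is faithful --- also faithful, i.e.\ an equivalence of groupoids; by \cite[Lem.\,5.1.12]{BalmerDellAmbrogio20} this makes $[\Phi_{i_!u^*},\id,\id]$ an isomorphism in $\Span^\rfree(H,G)$. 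What still needs to be checked is naturality in the span: for a morphism of spans $[p,\alpha,\beta]$ one verifies that the square relating the two relevant $[\Phi,\id,\id]$'s with $\int\cat R([p,\alpha,\beta])$ commutes, which is immediate from the definition of $\Phi$ on objects ($z\mapsto(u(z),i(z),[\id,\id])$) and on morphisms ($p\mapsto(u(p),i(p))$).

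With functors in both directions and natural isomorphisms $\cat R\int\cong\Id$ and $\int\cat R\cong\Id$ in hand, I would conclude that $\cat R_{H,G}$ and $\int_G^H$ form an equivalence of categories. I expect the only genuinely non-formal points to be the two naturality verifications (of $\varphi_S$ and of $[\Phi_{i_!u^*},\id,\id]$); both are routine once the explicit formulas are written out, and no new idea beyond \Cref{Lem:rf}, \Cref{Lem:ess-image-int} and \Cref{Lem:S-vs-Rint(S)} is required.
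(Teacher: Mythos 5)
Your proposal is correct and follows essentially the same route as the paper, whose proof is literally ``just combine'' \Cref{Lem:rf}, \Cref{Lem:Groth-constr-fun}, \Cref{Lem:ess-image-int} and \Cref{Lem:S-vs-Rint(S)}. You in fact supply slightly more than the paper does, notably the direct check that $\int S$ is right-faithful when $S$ is right-free (which is not literally one of the quoted lemmas) and the two naturality verifications; these are exactly the details the paper leaves to the reader.
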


\begin{proof}
Just combine \Cref{Lem:rf}, \Cref{Lem:Groth-constr-fun}, \Cref{Lem:ess-image-int} and \Cref{Lem:S-vs-Rint(S)}.
\end{proof}

\begin{proof}[Proof of \Cref{Thm:Span-vs-biset}]
By \Cref{Lem:rf},  we may restrict $\cat R$ to a pseudo-functor
\[
\cat R \colon \Span^\rfree \too \biset^\rfree
\]
between the 2-full sub-bicategories, restricting to right-faithful in $\Span $ and to right-free in $\biset$.
By \Cref{Prop:local-equivs}, this pseudo-functor is an equivalence at each Hom category. As $\cat R$ is a bijection on objects by construction, we may already conclude that it is a biequivalence of its source and target bicategories.
It also follows that the Grothendieck construction functors of \Cref{Prop:local-equivs} inherit from $\cat R$ through the Hom-equivalences the structure of a pseudo-functor~$\int$, quasi-inverse to~$\cat R$.
\end{proof}

Let us compute the image of some elementary 1-cells and 2-cells under the equivalence~$\cat R\colon \kk\Spanhatrf\isoto \kk\bisethatrf$. These are all easy computations from the definitions.
\begin{Exa}
\label{Exa:R(u^*)}%
Let $u\colon H\to G$ be a group homomorphism and consider the 1-cell $u^*\colon G \xlto{u} H \xto{\Id}H$ in~$\Spanhatrf(G,H)$. Then $\cat R(u^*)$ is the $H,G$-biset~${}_HG_G$ with action $h\cdot x\cdot g=u(h)\,x\,g$.
\end{Exa}
\begin{Exa}
\label{Exa:R(i_!)}%
Let $i\colon H\into G$ be an injective group homomorphism and consider $i_!=i_*\colon H \xlto{\Id} H \xto{i}G$ in~$\Spanhatrf(H,G)$. Then $\cat R(i_!)$ is the $G,H$-biset~${}_GG_H$ with action $g\cdot x\cdot h=g\,x\,i(h)$. In that case, we can combine this 1-cell with the 1-cell~${}_HG_G$ of \Cref{Exa:R(u^*)} and consider the units and counits of the adjunctions~$i_!\adj i^*\adj i_*$. Their images under~$\cat R$ are as follows. Note that ${}_HG_G \circ {}_GG_H\cong {}_HG_H$ whereas ${}_GG_H \circ {}_HG_G\cong {}_G(G\times_H G)_G$. Of course, $\Id_{H}={}_H H_H$ and $\Id_{G}={}_G G_G$. We have
\begin{equation}
\label{eq:(co)units}%
\vcenter{\xymatrix@C=0em@R=.5em{
\leta_i= \big[ H \overset{\id}{\Leftarrow} H \overset{i}{\To}  G \big] \colon
& \Id_H={}_H H_H\To {}_H G_H=i^*i_!
\\
\leps_i=\big[ G\times_H G \overset{\id}{\Leftarrow} G\times_H G \overset{\mu}{\To} G \big] \colon
& i_!i^*= {}_G (G\times_H G)_G\To{}_G G_G= \Id_G
\\
\reta_i=\big[ G\overset{\mu\,}{\Leftarrow} G\times_H G \overset{\id}{\To} G\times_H G \big] \colon
& \Id_G={}_G G_G\To {}_G G\times_H G_G=i_*i^*
\\
\reps_i=\big[ G \overset{i}{\Leftarrow} H \overset{\id}{\To} H\big] \colon
& i^*i_*={}_H G_H \To {}_H H_H=\Id_H
}}\kern-1.3em
\end{equation}
where the map marked~$\mu$ is the multiplication of~$G$.
We recognize the Frobenius relation $\reps\circ\leta=\big[ H=H=H \big]=\id_{H}$.
On the other hand, the composite $\leps\circ\reta$ of \Cref{Def:co2mack-intro} is given by the span of~$G,G$-bisets~$\big[ G\overset{\mu\,}{\Leftarrow} G\times_H G \overset{\mu}{\To} G \big]$.
\end{Exa}

\begin{Rem}
In view of \Cref{Lem:ess-image-int}, it is tempting to believe that $\cat{R}$ should give a biequivalence between $\biset$ and the 2-full subcategory of $\Span$ of all jointly faithful spans.
Unfortunately the latter spans are not stable under composition, hence such a bicategory does not exist.
To see why, just observe the diagram
\begin{equation*}
\vcenter{
\xymatrix@C10pt@R5pt{
&& G \ar@{=}[ld] \ar@{=}[rd] \ar@{}[dd]|-{=} \ar@/_3ex/[ddll] \ar@/^3ex/[ddrr]
\\
&G \ar@{=}[rd] \ar[ld]
&& G \ar@{=}[ld] \ar[rd]
\\
1 \ar@{..>}[rr]
&& G \ar@{..>}[rr]
&& 1
}}
\end{equation*}
displaying a (very) non-jointly-faithful composite of two jointly faithful spans.
\end{Rem}

\begin{Rem}
\label{Rem:crossed-prod-inv}%
For  $H=1$ the trivial group, \Cref{Prop:local-equivs} yields an equivalence
\[
\xymatrix@1{
\tau_1(\gpdG)\cong \Span^\rfree (1,G) \ar@<.8ex>[rr]_-{\sim}^-{\cat R_{1,G}} && \biset^\rfree (1,G) \cong G\sset \,. \ar@<.8ex>[ll]^-{\int_G^1}
}
\]
The two identifications, at the left with the truncated comma 2-category $\tau_1(\gpdG)$ of groupoids faithfully embedded in~$G$, and on the right with the category of left $G$-sets, are isomorphisms of 1-categories simply obtained by suppressing the data over the trivial group.
When $G$ is a group, this is the equivalence of categories used in \cite[App.\,B]{BalmerDellAmbrogio20} and \cite{DellAmbrogio22a} to reformulate Mackey functors for a fixed group~$G$ in terms of groupoids.
Indeed, the above equivalence $G\sset \overset{\sim}{\to} \tau_1(\gpdG)$ is precisely the crossed product functor $X\mapsto (\pi_X\colon G\ltimes X\rightarrowtail G)$ of \cite[Prop.\,B.0.8]{BalmerDellAmbrogio20}, for which we also have now (even for $G$ any finite groupoid) a nice canonical pseudo-inverse $ \tau_1(\gpdG) \overset{\sim}{\to} G\sset $.
Explicitly, the latter sends an object $(P,i_P\colon P\rightarrowtail G)$ to the $G$-set $(i_P/-)_\simeq $ which maps each object $x\in \Obj G$ to the set $(i_P/x)_{\simeq}$ of isomorphism classes of objects $(y,i_P(y)\to x)$ in the ordinary slice category~$(i_P/x)$.
\end{Rem}

\begin{Rem}
\label{Rem:Webbouc}%
By 1-truncating the biequivalence $\cat{R}\colon \Motk \overset{\sim}{\to} (\kk \bisethatrf)^\flat$ and forming categories of $\kk$-linear functors on both sides (see \Cref{Rem:bisets-agree} and \cite[Cor.\,6.22]{DellAmbrogio22a}), we obtain the well-known equivalence between inflation functors~\cite{Webb93} and right-free biset functors~\cite{Bouc10}.
\end{Rem}

%
\section{Cohomological Mackey 2-functors}
\label{sec:cohom-2-Mackey}%

Recall from \Cref{Def:co2mack-intro} that a (rectified) Mackey 2-functor is \emph{cohomological} if $\leps \,\reta = [G\!:\!H]\, \id_{\MM(G)}$ for every subgroup inclusion $i\colon H\into G$.
In this section, we provide some familiar examples as well as the first applications of our definition.

\begin{Exa}[Representation theory]
\label{Exa:linear}
There are Mackey 2-functors whose value $\MM(G)$ at a group $G$ is either the category of linear representations $\Modcat(\kk G)$, or its derived category $\Der(\kk G)$, or its stable module category $\Stab(\kk G)$ (for the third example, one limits the domain to $\GG=\gpdf$; see \cite[Ch.\,4.1-2]{BalmerDellAmbrogio20} for details).
An easy computation with the usual adjunctions shows they are all cohomological.
\end{Exa}

\begin{Exa} [Permutation modules]
\label{Exa:perm}
The full subcategories $\permcat_\kk(G) \subseteq \Modcat(\kk G)$ of finitely generated permutation $\kk G$-modules form a Mackey sub-2-functor, since inductions and restrictions preserve permutation modules. It is still cohomological.
The same holds for the subcategory $\permcat_\kk( G)^\natural \subseteq \Modcat(\kk G) $ of direct summands of permutation modules.
(In characteristic~$p$, these are known as \emph{$p$-permutation} modules or \emph{trivial source} modules. But the above makes sense for any ring~$\kk$.)
\end{Exa}

\begin{Exa} [Represented 2-functors]
\label{Exa:univ}
Once we develop our motivic theory, we will see that every cohomological Mackey 2-motive represents a cohomological Mackey 2-functor (\Cref{Cor:univ}).
The trivial motive~$(1,\id)$ represents the Mackey 2-functor $\permcat_\kk( G)^\natural$ of \Cref{Exa:perm}.
\end{Exa}

\begin{Exa} [Equivariant objects]
\label{Exa:geom}
All $G$-local examples of Mackey 2-functors of equivariant objects from \cite[Ch.\,4.4]{BalmerDellAmbrogio20} are cohomological, by an easy computation using the concrete adjunctions provided there. (Here as before, `$G$-local' means defined over $\gpdG\cong G\sset$ rather than~$\gpd$; \cf \Cref{Exas:ordinary-MF}.) These include many geometric examples, such as equivariant coherent sheaves on a noetherian $G$-scheme.
\end{Exa}

\begin{Exa} [Cohomological Mackey functors]
\label{Exa:CoMack}
There is a cohomological Mackey 2-functor whose value
$\MM(G) = \CoMack_\kk(G)$ is
the category of (ordinary) cohomological Mackey functors for~$G$.
To see this, we can apply the construction $\mathcal S \mapsto \MM $ of \cite[Prop.\,7.3.2]{BalmerDellAmbrogio20}, as we can easily check that if the Mackey 2-functor $\mathcal S$ is cohomological then so is~$\MM$.
Taking $\mathcal S$ the (pointwise dual of) the Mackey 2-functor $ G\mapsto \permcat_\kk(G)^\op$ of~\Cref{Exa:perm}, we obtain $\MM\colon G\mapsto \Fun_\kk (\permcat_\kk( G), \Modcat \kk)\cong \CoMack_\kk(G)$ in accordance with Yoshida's Theorem (\cf \Cref{Rem:cf-Yoshida}).
\end{Exa}

The next result is a first justification for the adjective `cohomological'.

\begin{Thm} \label{Thm:coh-decats}
Let $\MM$ be any cohomological Mackey 2-functor, and suppose that $M$ is an ordinary Mackey functor obtained from~$\MM$ by the Hom-decategorification procedure as in \Cref{Thm:Hom-decat-general}.
Then $M$ is cohomological in the classical sense: $I_K^H R^H_K = [H\!:\!K]\cdot \id_{M(H)}$ for all subgroups $K\le H\leq G$.
\end{Thm}

\begin{proof}
This is a direct verification from the definition of the restrictions and transfers in \Cref{Thm:Hom-decat-general}, in fact for any of the classical choices of $(\GG;\JJ)$ as in \Cref{Exas:ordinary-MF} where we can  view subgroup inclusions as 1-morphisms in~$\JJ$.

Namely, let $(i\colon H\to G)\in \JJ$ and $f\in M(G)= \MM(G)(X_G, Y_G)$ for any coherent choice $\{X_G,Y_G,\lambda_u,\rho_u\}$ of pairs of objects  in~$\MM$.
Applying the transfer $i_\sbull$ to the restricted map $i^\sbull(f)$, the $\lambda_i$'s and $\rho_i$'s cancel out, leaving us with the composite
\[
\xymatrix{
X_G \ar[r]^-{\reta_{i}}
& i_*i^* X_G \ar[r]^-{i_*i^*(f)}
& i_*i^*Y_G \ar[r]^-{\leps_i}
& Y_G.
}
\]
By naturality of~$\reta_{i}$ or~$\leps_{i}$, this is $f$ composed with $\leps_{i}\circ\reta_{i}=[H\!:\!K]\,\id$.
\end{proof}

\begin{Exa}[Group cohomology]
Specializing \Cref{Exa:general-monoidal} to the (global or local) cohomological Mackey functor $G\mapsto \MM(G) = \Der(\kk G)$ of \Cref{Exa:linear} yields the motivating example of a cohomological Mackey functor, namely group cohomology
$G\mapsto \mathrm H^* (G;\kk) = \MM(G)^*(\mathbf 1, \mathbf 1)$.
For a fixed group $G$, and any $V\in \Modcat (\kk G)$, we also get the variant $H \mapsto \mathrm H^*(H; V|_H)$ ($H\leq G$) with twisted coefficients by setting $X_0 = \kk$ and $Y_0 = V$ in \Cref{Exa:local-general}.
Similarly, by taking stable module categories instead of derived categories we obtain Tate cohomology of finite groups.
\end{Exa}

\begin{Rem}
\label{Rem:decats}
The analogue of \Cref{Thm:coh-decats} does \emph{not} hold for the perhaps more familiar $K_0$-style of decategorification (see \cite[\S 2.5]{BalmerDellAmbrogio20}).
For instance, the Mackey 2-functor $G\mapsto \modcat(\kk G)$ of finitely generated representations is cohomological (\cf \Cref{Exa:linear}) but the ordinary Mackey functor $G\mapsto K_0(\modcat \kk G )=\mathrm{R}_\kk(G)$, say for $\kk=\mathbb C$, is the usual representation ring, which is not cohomological.
\end{Rem}

\begin{Rem}
\label{Rem:comack-gen-formula}
Following up on the previous remark, suppose that $\MM$ is a \emph{Green 2-functor} in the sense of \cite{DellAmbrogio21pp}.
In particular, this means that the categories $\MM(G)$ are monoidal, the restriction functors are strong monoidal, and there is a projection formula
$i_*(i^*X \otimes Y)\cong X \otimes i_*Y$ for faithful $i\colon H\into G$.
After applying~$K_0$, one gets an ordinary Mackey functor satisfying $I_H^G R^G_H=[i_*(\mathbf 1)]\cdot\id$, where $\mathbf 1\in \MM(H)$ denotes the tensor unit.
We can view this as a kind of `generalized cohomological relation' for a `generalized index'~$[i_*(\mathbf 1)]\in K_0(\MM(G))$.
This applies to the representation ring (\Cref{Rem:decats}), and in fact to any Green functor arising from a Green 2-functor by $K_0$-decategorification.
If $\MM$ is a Green 2-functor, moreover, the projection formula $i_*i^*\cong i_*(\mathbf{1})\otimes -$ identifies the composite $\leps\circ\reta$ of \Cref{Def:co2mack-intro} with multiplication by the Euler characteristic $\chi(i_*(\mathbf{1})) \in \End_{\MM(G)}(\mathbf{1})$ (\ie the monoidal trace of the identity) of the dualizable object~$i_*(\mathbf{1})$.
This is a consequence of the special Frobenius structure of $i_*(\mathbf{1})$; see \cite[\S\S\,7-8]{DellAmbrogio21pp}.
Hence such an $\MM$ is cohomological precisely when $\chi(i_*(\mathbf{1}))$ is multiplication by $[G\!:\!H]$.
\end{Rem}

Thus cohomological Mackey 2-functors are a source of classical cohomological Mackey 1-functors, via Hom-decategorification.
For the remainder of the section, we further validate our definition by sketching a couple of applications.

\begin{Thm}[$p$-local separable monadicity]
\label{Thm:p-local-rec}
Let $\MM$ be a cohomological Mackey 2-functor, and let $G$ be a finite group such that $\MM(G)$ is a $\mathbb Z_{(p)}$-linear category for a prime number~$p$ (\eg the base ring $\kk$ is a $\bbZ_{(p)}$-algebra, \eg it is a field of characteristic~$p$).
Let $i\colon H\into G$ denote the inclusion of a subgroup of index prime to~$p$ (for instance a $p$-Sylow).
Then the monad $\mathbb A := i^*i_!$ on $\MM(H)$ induced by the adjunction $i_!\dashv i^*$ is \emph{separable}, that is its multiplication $\mu:=i^* (\leps ) i_*\colon \mathbb A^2\Rightarrow \mathbb A$ admits an $\mathbb A$-bilinear section. In particular, it follows that restriction $i^*\colon \MM(G)\to \MM(H)$ satisfies descent, in that the canonical comparison functor $\MM(G) \overset{\sim}{\too} \Modcat(\mathbb A)_{\MM(H)}$ into the Eilenberg-Moore category of $\mathbb A$-modules in~$\MM(H)$ is an equivalence.
\end{Thm}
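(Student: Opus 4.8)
Write $n:=[G\!:\!H]$, which is prime to~$p$; since $\MM(G)$ is $\bbZ_{(p)}$-linear, $n$ acts invertibly on all its morphism modules, and hence also on those of $\MM(H)$: applying the Mackey formula \Mack{3} and additivity \Mack{1} to the pseudo-pullback of $i$ with itself exhibits $\Id_{\MM(H)}$ as a direct summand of $\mathbb A=i^*i_!$ (the summand coming from the trivial double coset), so every $\Hom_{\MM(H)}(X,Y)$ is a retract of $\Hom_{\MM(G)}(i_!X,i_!Y)$, which is $\bbZ_{(p)}$-linear. Thus $\tfrac1n$ is available everywhere below; recall also that $i_!=i_*$ after rectification.

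\emph{The section.} Set
\[
\sigma\ :=\ \tfrac1n\,i^*(\reta)\,i_!\ \colon\ \mathbb A\To\mathbb A^2 ,
\]
the horizontal whiskering $i^*(-)i_!$ of the unit $\reta\colon\Id_{\MM(G)}\To i_*i^*=i_!i^*$ of the right adjunction $i^*\dashv i_*$, rescaled by $\tfrac1n$ (here $\mathbb A=i^*i_!$ and $\mathbb A^2=i^*i_!i^*i_!$). That $\sigma$ is a section of $\mu=i^*(\leps)i_!$ is precisely the cohomological relation of \Cref{Def:co2mack-intro}: since whiskering is compatible with composition,
\[
\mu\circ\sigma\ =\ \tfrac1n\,i^*(\leps\circ\reta)\,i_!\ =\ \tfrac1n\,i^*\bigl(n\cdot\id_{\Id_{\MM(G)}}\bigr)\,i_!\ =\ \id_{\mathbb A}.
\]
Hence $\mathbb A$ will be a separable monad once we know that $\sigma$ is a morphism of $\mathbb A$-bimodules.

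\emph{Bilinearity, and the main obstacle.} The conceptual reason for bilinearity is that, in the rectified setting, the ambidextrous adjunction $i_!=i_*\dashv i^*$ equips $\mathbb A$ with the structure of a Frobenius monoid in the monoidal category of $\kk$-linear endofunctors of $\MM(H)$ (under composition): multiplication $\mu=i^*(\leps)i_!$, unit $\leta$, comultiplication $\delta:=i^*(\reta)i_!$, counit $\reps$ (\cf \cite{BalmerDellAmbrogio20}). For any Frobenius monoid the comultiplication is automatically a bimodule map over the underlying monoid, and rescaling it by the central scalar $\tfrac1n$ preserves this, so $\sigma=\tfrac1n\,\delta$ is $\mathbb A$-bilinear. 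If one prefers to avoid the Frobenius formalism, one verifies the two bimodule identities for $\sigma$ directly, by a diagram chase using only naturality of $\reta$ and $\leps$, the triangle identities of the two adjunctions, and the compatibility~\eqref{eq:2-Mack} between their base-change isomorphisms. I expect this verification -- equivalently, the assertion that $\mathbb A$ is Frobenius with ``handle element'' $n\cdot\id$ -- to be the main point of the proof and the place where the rectification hypothesis is genuinely used; the rest is formal.

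\emph{Descent.} Granting that $\mathbb A$ is a separable monad, and since $\MM(G)$ and $\MM(H)$ are idempotent-complete by our standing convention, the Eilenberg--Moore comparison functor $\MM(G)\to\Modcat(\mathbb A)_{\MM(H)}$, $X\mapsto(i^*X,\,i^*\leps_X)$, is automatically fully faithful, and Beck's monadicity criterion reduces to the conservativity of $i^*$ (see \cite{BalmerDellAmbrogio20}). And $i^*$ is conservative: the 2-cell $\tfrac1n\reta$ is a section of $\leps$, so every $X\in\MM(G)$ is a retract of $i_!i^*X$, naturally in $X$; hence if $i^*f$ is invertible for some $f\colon X\to Y$ in $\MM(G)$, then $\leps_X\circ(i_!i^*f)^{-1}\circ\tfrac1n\reta_Y$ is a two-sided inverse of $f$, as one verifies from the naturality of $\leps$ and $\reta$ and the relation $\leps_X\circ\reta_X=n\,\id_X$. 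Therefore the comparison functor is an equivalence, which is the asserted descent statement.
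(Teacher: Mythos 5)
Your proof is correct and follows essentially the same route as the paper: the cohomological relation together with the invertibility of $n=[G\!:\!H]$ in the $\bbZ_{(p)}$-linear category $\MM(G)$ makes $\tfrac1n\reta$ a natural section of the counit $\leps$ of $i_!\dashv i^*$, and the paper then simply cites \cite[Lemma~2.10]{Balmer15} for the formal consequence that the monad is separable and that $i^*$ is monadic, whereas you reprove that lemma by hand. Your verification is fine, but note that the $\mathbb A$-bilinearity of $i^*(\xi)i_!$ holds for \emph{any} natural section $\xi$ of $\leps$ purely by naturality of $\leps$ and of $\xi$ (this is exactly the content of the cited lemma), so the Frobenius-monoid detour --- and the worry that this is ``the main obstacle'' requiring rectification --- is not actually needed; likewise the preliminary argument that $\MM(H)$ is $\bbZ_{(p)}$-linear is superfluous, since $\tfrac1n$ is only ever applied to the 2-cell $\reta$ living over $\MM(G)$.
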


\begin{proof}
Since $\MM$ is cohomological, the composite $\leps \,\reta$ acts on  $\MM(G)$ as multiplication by $[G:H]$, which is invertible by the $\mathbb Z_{(p)}$-linearity of~$\MM(G)$. In particular, the counit $\leps$ of the adjunction $i_!\dashv i^*$ admits a natural section.
As $\MM(G)$ is assumed idempotent-complete, we may conclude with \cite[Lemma~2.10]{Balmer15}.
\end{proof}

\begin{Rem} Note that \Cref{Thm:p-local-rec} goes in the `opposite' direction of the deceptively similar result of \cite[Theorem~2.4.1]{BalmerDellAmbrogio20}, which says that the \emph{other} adjunction $i^*\dashv i_*$ is separably monadic, and which holds for \emph{any} Mackey 2-functor $\MM$ and \emph{any} faithful $i\colon H\rightarrowtail G$. In particular, one can always reconstruct $\MM(H)$ from the monad~$i_*i^*$ on~$\MM(G)$, but in order to recover $\MM(G)$ from the monad $i^*i_!$ on $\MM(H)$ special circumstances are required, such as those in \Cref{Thm:p-local-rec}.
\end{Rem}

\begin{Rem}
\Cref{Thm:p-local-rec} can be reformulated as a categorification of the classical Cartan-Eilenberg stable elements formula for mod-$p$ group cohomology.
To wit, for $\MM$ and $\kk$ as in the theorem, the above monadic reconstruction of $\MM(G)$ can be replaced by a pseudo-limit $\MM(G)\simeq \lim _{G/P} \MM(P)$ in $\ADD$, where $P$ ranges through the $p$-subgroups of~$G$ and the limit is taken over the corresponding orbit category.
This leads to a generalization of the main results of \cite{Balmer15} to arbitrary cohomological Mackey 2-functors;
see \cite{Maillard22} for details and explanations.
\end{Rem}

\begin{Rem}
To outline another application, let us simply mention that the general Green correspondence of \cite{BalmerDellAmbrogio21} is most useful for Mackey 2-functors which are cohomological, for which it gives rise to a `$p$-local theory' as in modular representation theory. See \cite[\S\S\,6-7]{BalmerDellAmbrogio21} for details.
\end{Rem}

%
\section{Cohomological Mackey 2-motives}
\label{sec:cohom-2-motives}%

We now turn to \Cref{Thm:univ-cohom-2-Mackey-intro} and the description of cohomological Mackey 2-motives in simple terms.
Here we use the results of \Cref{sec:2-Mack-bisets}.

Since cohomological Mackey 2-functors are Mackey 2-functors that send some special 2-cells to zero, there is a tautological approach to the bicategory of cohomological Mackey 2-motives: It is the quotient
\[
\cat Q\colon\Motk\onto \CohMotk
\]
obtained by modding out the 2-cells corresponding to the cohomological conditions:
\begin{equation}
\label{eq:coh-rel}%
\qquad \leps_i \circ \reta_i  - [G\!:\!H]\cdot \id \qquad \in \End_{\Motk}(\Id_{G})
\end{equation}
for every inclusion $i\colon H\into G$ of a subgroup~$H$ in a group~$G$. But this definition is rather sprawling: We need to consider the closure of the above class of 2-cells inside~$\Motk$ under composition and $\kk$-linear combination inside each Hom category; plus we need to take into account horizontal composition with arbitrary 2-cells, including whiskering. So the tautological definition is unwieldy.

Our \Cref{Thm:univ-cohom-2-Mackey-intro} gives a concrete realization of~$\CohMotk$ as~$(\bipermk)^\flat$, the bicategory obtained by block-completing the bicategory of right-free permutation bimodules~$\biperm_{\kk}^\rfree$ (\Cref{Def:biperm-rfree}).
In view of \Cref{Prop:k-linearization}, it is more convenient to use the model of $\Motk$ via bisets, as described in \Cref{sec:2-Mack-bisets}. To do so, we need to translate the 2-cell~\eqref{eq:coh-rel} under the equivalence of \Cref{Thm:Span-vs-biset}.
In view of \Cref{Exa:R(i_!)}, the image of $\leps_i \circ \reta_i-[G\!:\!H]\cdot \id$ in the category $\kk\bisethatrf(G,G)$ is simply the following linear combination of spans of equivariant maps between right-free $G,G$-bisets (with $\mu$ induced by multiplication):
\begin{equation}
\label{eq:coh-rel-biset}%
\big[ G\overset{\mu\,}{\Leftarrow} G\times_H G \overset{\mu}{\To} G \big] - [G\!:\!H]\cdot \id_{\Id_G}.
\end{equation}
For simplicity, we call this the \emph{cohomological 2-cell} corresponding to~$H\le G$.

\begin{Rem}
\label{Rem:bisets-x}%
Let $G_1,G_2$ be two groups. We can view $G_1,G_2$-bisets~$X$ as left $(G_1\times G_2)$-sets via $(g_1,g_2)\cdot x=g_1 x g_2\inv$ for every $x\in X$. Decomposing into orbits, every $G_1,G_2$-biset is a coproduct of transitive $G_1\times G_2$-sets of the form $(G_1\times G_2)/M$ for subgroups~$M\le G_1\times G_2$.
Translating back, the $G_1,G_2$-biset corresponding to such an orbit $(G_1\times G_2)/M$ is the same set with action $g_1\cdot[x,y]\cdot g_2=[g_1\,x,g_2\inv y]$.
Since we focus on right-free bisets, we note that $(G_1\times G_2)/M$ is right-free (over~$G_2$) if and only if the first projection $G_1\times G_2\to G_1$ is injective on~$M$, that is, $(\pr_1)\restr{M}\colon M\into G_1\times G_2\onto G_1$ is faithful. Indeed, the right-action of $g_2\in G_2$ fixes a class $[x,y]\in(G_1\times G_2)/M$ if and only if $(1,(g_2)^y)\in M$.
\end{Rem}
\begin{Exa}
\label{Exa:Delta-1}%
Let $G_1=G_2=G$ and consider the $G,G$-biset $G\times_H G$ of~\eqref{eq:coh-rel-biset}. As left $(G\times G)$-set, it is isomorphic to a single orbit $(G\times G)/M$ for the subgroup $M=\Delta(H)=\SET{(h,h)}{h\in H}$, via $(G\times G)/M\isoto G\times_H G$, $(g_1,g_2)\mapsto (g_1,g_2\inv)$.
\end{Exa}

We now consider a class of 2-cells in~$\kk\bisethatrf$ that will play a role later on.
\begin{Cons}
\label{Cons:delta}%
Let $G_1,G_2$ be finite groups and~$M\le N\le G_1\times G_2$ be subgroups such that $\pr_1$ is injective on~$N$.
As in \Cref{Rem:bisets-x}, we view the linear combination
\begin{equation}
\label{eq:coh-rel-biset-delta}%
\delta(G_1,G_2,M,N):=\quad \Big[ \textstyle{\frac{G_1\times G_2}{N}} \Leftarrow \textstyle{\frac{G_1\times G_2}{M}} \Rightarrow \textstyle{\frac{G_1\times G_2}{N}} \Big] \ - \ [N\colon M]\cdot\id_{(G_1\times G_2)/N}
\end{equation}
as an endomorphism of the $G_1,G_2$-biset $(G_1\times G_2)/N$ in the category $\kk\bisethatrf(G_2,G_1)$. The two equivariant maps denoted `$\Rightarrow$' are simply the quotient maps.
\end{Cons}
\begin{Exa}
\label{Exa:Delta-2}%
Let $H\le G$. Take again $G_1=G_2=G$ as in \Cref{Exa:Delta-1}.
Taking the subgroups~$M=\Delta(H)$ and~$N=\Delta(G)$ in \Cref{Cons:delta}, the expression~\eqref{eq:coh-rel-biset-delta} boils down to our cohomological 2-cell~\eqref{eq:coh-rel-biset}. Conversely, we now prove that every $\delta(G_1,G_2,M,N)$ belongs to the ideal generated by the cohomological 2-cells.
\end{Exa}

\begin{Lem}
\label{Lem:coh-rel-delta}%
Let $G_1$ and $G_2$ be finite groups and $M\leq N \leq G_1 \times G_2$ subgroups such that the first projection $\pr_1\colon G_1\times G_2\to G_1$ is injective on~$N$ (and thus on~$M$). Then the 2-cell $\delta(G_1,G_2,M,N)$ given in~\eqref{eq:coh-rel-biset-delta} belongs to the (bicategorical) ideal of 2-cells in~$\kk\bisethatrf$ generated by the cohomological 2-cells~\eqref{eq:coh-rel-biset}.
\end{Lem}
\begin{proof}
Consider the cohomological 2-cell~\eqref{eq:coh-rel-biset} for the subgroup~$H:=M$ of~$G:=N$, that is, $\delta_0:=\big[N\Leftarrow N\times_M N \To N\big]-[N\!:\!M]\cdot\id_{\Id_N}$. We claim that $\delta(G_1,G_2,M,N)$ is simply the 2-cell obtained from (pre-)whiskering $\delta_0$ by the 1-cell $G_2\to N$ given by the $N,G_2$-biset $G_2$ and (post-)whiskering it by the 1-cell $N\to G_1$ given by the $G_1,N$-biset $G_1$. In both cases, $N$ acts on~$G_i$ via $(\pr_i)\restr{N}\colon N\into G_1\times G_2\onto G_i$. Note that the resulting 1-cell is as wanted:
\[
G_1\times_N N \times_N G_2=G_1\times_N G_2\leftrightarrow(G_1\times G_2)/N
\]
where $\leftrightarrow$ indicates the dictionary between $G_1$,$G_2$-bisets and left $(G_1\times G_2)$-sets of \Cref{Rem:bisets-x}.
Since whiskering $\id_{\Id_N}$ in the same way gives $\id_{(G_1\times G_2)/N}$, it suffices to see what happens to the span $\big[ N\Leftarrow N\times_M N \To N \big]$ under these whiskerings. The result is indeed
\[
\big[\; (G_1\times G_2)/N\Longleftarrow (G_1\times G_2)/M \Longrightarrow (G_1\times G_2)/N \;\big]
\]
since the $G_1,G_2$-biset $G_1\times_N (N\times_M N)\times_N G_2\cong G_1\times_M G_2$ translates into the orbit~$(G_1\times G_2)/M$ as a left $(G_1\times G_2)$-set, via \Cref{Rem:bisets-x}. Direct verification shows that the above maps $\Leftarrow$ and $\To$ are indeed the canonical projections.
\end{proof}

In \Cref{Def:biperm-rfree}, we encountered the $\kk$-linearization $\kk[-]\colon \biset^\rfree\to \biperm^\rfree_\kk$.
Interestingly, $\biperm^\rfree_\kk$ accommodates spans, that is $\kk[-]$ can be nicely extended along the inclusion $\biset^\rfree \subset \bisethatrf$ to a pseudo-functor defined on \emph{spans} of 2-cells:

\begin{Prop}
\label{Prop:P}
There is a well-defined pseudo-functor
\[
\cat P\colon \bisethatrf\too \bipermk
\]
which maps a finite groupoid to itself and a $G,H$-biset $U$ to the permutation $G$,$H$-bimodule~$\kk[U]$. It maps a 2-cell given by a span of equivariant maps of $G,H$-bisets
\[
\big[ \xymatrix{
U & W \ar@{=>}[l]_-{\beta} \ar@{=>}[r]^-\alpha &  V
}
\big]
\]
to the \emph{sum-over-fibers} natural transformation $\alpha_\star\beta^\star \colon \kk[U]\Rightarrow \kk[V]$, whose component at the object $(y,x)\in H^\op\times G$ is the $\kk$-linear map defined on basis elements by
\begin{equation} \label{eq:sums-over-fibers}
(\alpha_\star\beta^\star)_{y,x} \colon \kk[U(y,x)] \too \kk[V(y,x)], \quad u\longmapsto \sum_{w\in \beta_{y,x}^{-1}(u)} \alpha_{y,x}(w)\,.
\end{equation}
\end{Prop}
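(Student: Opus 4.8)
The plan is to realize $\cat P$ as an extension of the $\kk$-linearization pseudo-functor $\kk[-]\colon\biset^\rfree\to\perm^\rfree_\kk$ of \Cref{Prop:k-linearization} along the inclusion $\biset^\rfree\hookrightarrow\bisethatrf$ that views an equivariant map $\alpha$ as the span $[U\overset{\id}{\Leftarrow}U\overset{\alpha}{\Rightarrow}V]$ with identity left leg. On objects, on $1$-cells, and on the structure isomorphisms $\kk[U]\otimes_H\kk[V]\isoto\kk[U\times_H V]$, the prospective $\cat P$ \emph{is} $\kk[-]$; in addition it sends the ``forward'' span $[U\overset{\id}{\Leftarrow}U\overset{\alpha}{\Rightarrow}V]$ to $\alpha_\star=\kk[\alpha]$ and the ``backward'' span $[U\overset{\beta}{\Leftarrow}W\overset{\id}{\Rightarrow}W]$ to the transfer $\beta^\star$, and a general span $2$-cell is a vertical composite of a backward span and a forward span. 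Now the coherence (associativity and unit) axioms of a pseudo-functor involve only the structure isomorphisms together with the values on objects and $1$-cells, and by the construction of $\bisethatrf$ in \Cref{sec:2-Mack-bisets} the associators and unitors of $\bisethatrf$ are the images of those of $\biset^\rfree$; so these axioms for $\cat P$ are the very same identities (read in $\perm^\rfree_\kk\subseteq\Bimod_\kk$) as the coherence axioms for $\kk[-]$, hence hold by \Cref{Prop:k-linearization}. What remains is to check that $\cat P$ is (i) well defined on span $2$-cells, (ii) functorial for their vertical composition, and (iii) compatible with their horizontal composition.

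For (i): each component \eqref{eq:sums-over-fibers} is a genuine $\kk$-linear map since all bisets are finite, so the fibres $\beta_{y,x}\inv(u)$ are finite; that $\alpha_\star\beta^\star$ is a morphism of bimodules (\ie equivariant and natural in $(y,x)$) follows from the equivariance of $\alpha$ and $\beta$, using that every morphism of $G$ or $H$ acts as a bijection, so that $\beta\inv$ carries $g\cdot u\cdot h$ to $g\cdot\beta\inv(u)\cdot h$ and $\alpha$ intertwines the actions. Independence of the chosen representative of the span is immediate, an isomorphism of spans restricting fibre-by-fibre to bijections compatible with the two right legs; the identity span of $U$ goes to $\id_{\kk[U]}$. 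For (ii): vertical composition of spans is by pullback, and the point is the elementary base-change (double counting) identity --- for composable spans $[U\overset{\beta}{\Leftarrow}W\overset{\alpha}{\Rightarrow}V]$ and $[V\overset{\delta}{\Leftarrow}W'\overset{\gamma}{\Rightarrow}Z]$ with pullback $W\times_V W'=\SET{(w,w')}{\alpha(w)=\delta(w')}$, both $\cat P$ of the composite span and the composite of the two $\cat P$'s send a basis element $u$ to the sum of $\gamma(w')$ over those $(w,w')\in W\times_V W'$ with $\beta(w)=u$. The contravariant functoriality $(\beta_2\beta_1)^\star=\beta_1^\star\beta_2^\star$ and $\id^\star=\id$ are of the same nature. (Equivalently, (i)--(ii) say that forward maps $\mapsto\kk[\alpha]$ and backward maps $\mapsto\beta^\star$ agree on objects and satisfy base change, hence glue, by the universal property of span categories, to a functor on each Hom category $\bisethatrf(G,H)$.)

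Step (iii) is the substantial one. By the construction of $\bisethatrf$, the horizontal composite of two span $2$-cells is obtained by applying the coend $-\times_H-$ to their span diagrams; so, via the canonical isomorphism $\kk[U]\otimes_H\kk[V]\isoto\kk[U\times_H V]$ of \Cref{Prop:k-linearization}, the compatibility reduces, for spans $[U\overset{\beta_1}{\Leftarrow}W_1\overset{\alpha_1}{\Rightarrow}U']$ and $[V\overset{\beta_2}{\Leftarrow}W_2\overset{\alpha_2}{\Rightarrow}V']$ and a basis element $u\otimes v$, to the identity
\[
\sum_{w_1\in\beta_1\inv(u),\ w_2\in\beta_2\inv(v)}[\alpha_1(w_1),\alpha_2(w_2)]
\;=\;
\sum_{[w_1,w_2]\,\in\,(\beta_1\times_H\beta_2)\inv[u,v]}[\alpha_1(w_1),\alpha_2(w_2)]
\]
inside $\kk[U'\times_H V']$. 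The comparison map $(w_1,w_2)\mapsto[w_1,w_2]$ carries the indexing set of the left sum onto that of the right one: surjectivity is the usual coend normalization, while its \emph{injectivity is exactly where right-freeness enters} --- if $[w_1,w_2]=[w_1',w_2']$ with $\beta_1(w_1)=\beta_1(w_1')=u$ and, say, $w_1'=w_1\cdot h$, then $u=\beta_1(w_1)\cdot h=u\cdot h$, forcing $h=\id$ by right-freeness of $U$, so the two representatives coincide. The summands on the two sides match because $\alpha_1,\alpha_2$ are equivariant, giving $[\alpha_1(w_1\cdot h),\alpha_2(h\inv\cdot w_2)]=[\alpha_1(w_1),\alpha_2(w_2)]$. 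One finally checks, routinely, that this comparison is natural in each span-variable and respects the associativity and unit constraints of the two bicategories --- again, the associativity and unit checks reduce to \Cref{Prop:k-linearization}, as they involve only the structure isomorphisms.

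I expect step (iii) to be the main obstacle. It is precisely the interaction of sums over fibres with the coend that defines horizontal composition, and it is here that the restriction to \emph{right-free} bisets is indispensable: without it the comparison map above need not be injective, so the prescribed value of $\cat P$ on a horizontal composite of $2$-cells would not even be well defined.
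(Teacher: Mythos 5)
Your proposal is correct and follows essentially the same route as the paper's own (which is itself only an outline): local functoriality on each Hom category via the base-change bijection between fibres in a pullback of bisets, and compatibility with horizontal composition via the canonical isomorphism $\kk[U\times_H V]\cong\kk[U]\otimes_H\kk[V]$ and its naturality with respect to the backward maps $\beta^\star$. Your explicit location of where right-freeness is indispensable --- the injectivity of the comparison $(w_1,w_2)\mapsto[w_1,w_2]$ in step (iii), forced by $u\cdot h=u\Rightarrow h=\id$ --- is precisely the content the paper leaves implicit in its final ``direct verification on the bases''.
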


\begin{proof}
This is a lengthy verification that we only outline.
Local functoriality of $\cat P\colon \bisethatrf(H,G)\to \bipermk(H,G)$ entails that given a pullback of $G,H$-bisets
\[
\xymatrix@C=1.5em@R=1.5em{
& U \ar@{=>}[ld]_-{\alpha} \ar@{=>}[rd]^-{\beta}
\\
V \ar@{=>}[rd]_-{\gamma}
&& W \ar@{=>}[ld]^-{\delta}
\\
& X
}
\]
we have $\beta_\star\alpha^\star=\delta^\star\gamma_\star\colon \kk[V]\to \kk[W]$. This is a direct verification on the bases, by definition of the cartesian product: $\beta$ restricts to a bijection $\alpha\inv(v)\isoto \delta\inv(\gamma(v))$.

To show that $\cat P$ preserves horizontal composition, consider for every $G,H$-biset~$U$ and $H,K$-biset~$V$ the canonical isomorphism $\kk[U\times_H V]\isoto \kk[U]\otimes_{H}\kk[V]$ that we already used in the proof of \Cref{Prop:k-linearization}. It provides the compatibility isomorphism between $\cat P(U\circ V)$ and $\cat P(U)\circ\cat P(V)$, on the condition that it is also natural with respect to the `backwards' morphisms~$U\Leftarrow U':\beta$ and the associated~$\beta^\star $. This is again a direct verification on the bases.
\end{proof}
\begin{Rem} \label{Rem:P-adj-groups}
When $i\colon H\into G$ is the inclusion of a subgroup, we spelled out the adjunctions $i_!\adj i^*\adj i_*$ in~$\bisethatrf$ in \Cref{Exa:R(i_!)}.
These adjunctions now have an image in~$\bipermk$ under~$\cat{P}$.
Of course, $\cat{P}(i_!)=\cat{P}(i_*)=\kk[{}_G G_H]={}_G \kk G_H$ and $\cat{P}(i^*)=\kk[{}_H G_G]={}_H \kk G_G$.
The units and counits $\leta,\leps$ of ${}_G \kk G_H \dashv {}_H \kk G_G$ and those $\reta,\reps$ of ${}_H \kk G_G \dashv {}_G \kk G_H$ are given in $\perm_\kk^\rfree$ by the familiar formulas:
\begin{equation*} 
\vcenter{
\xymatrix@R=6pt@C=6pt{
1 \ar@{|->}[rr] && { \sum_{[y]\in G/H} y\otimes y^{-1}} \\
{}_G\kk G_G \ar@<.5ex>[rr]^-{\reta} && {}_G \kk G \otimes_H \kk G_G \ar@<.5ex>[ll]^-{\leps} \\
g'g && \ar@{|->}[ll] g' \otimes g
}}
\quad\quad\quad
\vcenter{
\xymatrix@R=6pt@C=6pt{
\quad \quad \quad \quad \quad \quad \quad g \ar@{|->}[rr] && {\left\{\begin{array}{ll} g & (g\in H) \\ 0 & (g\not\in H) \end{array}\right. } \\
{}_H\kk G \otimes_G \kk G_H \cong {}_HG_H
  \ar@<.5ex>[rr]^-{\reps} &&
   {}_H\kk H_H \ar@<.5ex>[ll]^-{\leta\,=\,\incl } \\
&&
}}
\end{equation*}
Indeed, it is very easy to follow the images of the spans~\eqref{eq:(co)units} under the `sum-over-fibers' recipe of \Cref{Prop:P}. For instance, $\reps$ is given in~$\bisethatrf$ by the span $\big[ G \overset{i}{\Leftarrow} H \overset{\id}{\To} H\big]$. Its image under~$\cat{P}$ is the morphism $\kk G\to \kk H$ mapping a basis element $g\in G$ to $\sum_{h\in i\inv(g)}h$ which is $g$ if $g\in H$ and zero otherwise.
\end{Rem}
\begin{Exa}
\label{Exa:P(coh-rel)}%
Let us compute the image under~$\cat P$ of the cohomological 2-cell~\eqref{eq:coh-rel-biset}, for $H\le G$. Clearly, the underlying 1-cell~$\Id_{G}$ goes to~$\Id_{G}$ which is the bimodule~$\kk G$. Also clearly $[G\!:\!H]\cdot \id_{\Id_G}$ goes to $[G\!:\!H]\cdot \id_{\kk G}$.
The $\kk$-linear $\cat{P}\big(\big[ G\overset{\mu\,}{\Leftarrow} G\times_H G \overset{\mu}{\To} G \big] \big)\colon\!$ $\kk G\to \kk G$ maps a basis element $g\in G$ to $\sum_{[x,y]\in \mu\inv(g)}xy=|\mu\inv(g)|\cdot g$ where $\mu\colon G\times_HG\to G$ is multiplication. That fiber $\mu\inv(g)$ has~$[G\!:\!H]$ elements. In conclusion, $\cat{P}$ maps the cohomological 2-cell~\eqref{eq:coh-rel-biset} to zero.
\end{Exa}

We are now ready to describe $\bipermk$ as a 2-quotient of~$\kk\bisethatrf$.
\begin{Thm}
\label{Thm:2-Yoshida}%
Consider the unique $\kk$-linear extension $\kk \bisethatrf \to \perm^\rfree_\kk$ of the pseudo-functor $\cat P$ of \Cref{Prop:P}, which we again denote by~$\cat P$.
It is the identity on objects, essentially surjective on 1-cells, and full on 2-cells.
Its kernel on 2-cells $\ker(\cat P):= \{ \varphi \in \kk \bisethatrf_2 \mid \cat P(\varphi)=0 \}$ is generated by the \emph{cohomological 2-cells}~\eqref{eq:coh-rel-biset}
for all inclusions $i\colon H\into G$ of a subgroup $H$ of a finite group~$G$.
\end{Thm}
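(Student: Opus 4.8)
The plan is to handle the structural assertions quickly, reduce everything to finite \emph{groups}, and then identify the kernel. Identity on objects is built into the construction of~$\cat P$. For essential surjectivity on $1$-cells, a right-free permutation $G,H$-bimodule is by definition of the form $\kk[S]=\cat P(S)$ for the right-free $G,H$-biset~$S$ supplied by an invariant basis. For the two remaining claims I would first reduce to the case of groups: both $\kk\bisethatrf$ and $\perm^\rfree_\kk$ are additive in each of their two groupoid variables, every finite groupoid is equivalent to a finite coproduct of groups, $\cat P$ is compatible with all of this, and the cohomological $2$-cells~\eqref{eq:coh-rel-biset} --- hence the ideal~$\cat I$ they generate, and \Cref{Lem:coh-rel-delta} --- already live over groups. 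Since a coproduct of bisets is a biproduct in the relevant Hom-category, I may moreover assume, whenever convenient, that the source and target $1$-cells are transitive. From now on $G$ and~$H$ are finite groups and, via \Cref{Rem:bisets-x}, right-free $G,H$-bisets are identified with right-free left $(G\times H)$-sets.

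\emph{Fullness.} For transitive $U=(G\times H)/A$ and $V=(G\times H)/B$, the $\kk$-module $\Hom_{\kk(G\times H)}(\kk[U],\kk[V])$ is free with basis the $A$-orbit-sum maps, one for each double coset in $A\backslash(G\times H)/B$. For a representative~$y$, this basis element equals $\cat P(\sigma_y)$, by the sum-over-fibers recipe of \Cref{Prop:P}, where
\[
\sigma_y \;=\; \big[\ (G\times H)/A \;\longleftarrow\; (G\times H)/(A\cap yBy^{-1}) \;\longrightarrow\; (G\times H)/B\ \big]
\]
has the canonical projection on the left and the map $z\,(A\cap yBy^{-1})\mapsto zyB$ on the right. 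This is a span of right-free bisets, the apex being right-free since the first projection $G\times H\to G$ is injective on~$A$ --- because $U$ is right-free --- hence on~$A\cap yBy^{-1}$. Thus $\cat P$ maps a generating set of $2$-cells onto a $\kk$-basis of the target; in particular it is full on $2$-cells.

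\emph{The kernel.} The inclusion $\cat I\subseteq\ker\cat P$ is formal: each cohomological $2$-cell lies in $\ker\cat P$ by \Cref{Exa:P(coh-rel)}, and $\ker\cat P$ is a two-sided ideal of $2$-cells --- closed under $\kk$-linear combinations, vertical pre- and post-composition with arbitrary $2$-cells, and horizontal composition with arbitrary $1$- and $2$-cells --- because $\cat P$ is a $\kk$-linear pseudo-functor; hence $\cat I$, the smallest such ideal containing the cohomological $2$-cells, is contained in $\ker\cat P$. For the reverse inclusion I claim that \emph{modulo}~$\cat I$ the Hom-module $\kk\bisethatrf(H,G)(U,V)$, reduced again to transitive $U=(G\times H)/A$ and $V=(G\times H)/B$, is generated by the finitely many~$\sigma_y$ with $y\in A\backslash(G\times H)/B$. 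Indeed, every $2$-cell is a $\kk$-combination of spans, each span decomposes over the orbits of its apex into ones with transitive apex, and such a span is, up to isomorphism of spans, of the shape $\big[(G\times H)/A \xleftarrow{\pr} (G\times H)/C \xrightarrow{q}(G\times H)/B\big]$ with $C\le C':=A\cap yBy^{-1}$ for a suitable~$y$ and $q(zC)=zyB$. Now \Cref{Lem:coh-rel-delta}, applied to $C\le C'\le G\times H$ (legitimate since the first projection is injective on $A\supseteq C'$), places $\delta(G,H,C,C')$ in~$\cat I$; vertically composing $\delta(G,H,C,C')$ with the structural $2$-cells built from the projection $(G\times H)/C'\to(G\times H)/A$ and from the map $(G\times H)/C'\to(G\times H)/B$, $zC'\mapsto zyB$, produces exactly the displayed span minus $[C'\!:\!C]\cdot\sigma_y$. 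Therefore, modulo~$\cat I$, this Hom-module is generated by $\#\big(A\backslash(G\times H)/B\big)$ elements, which $\cat P$ carries to a $\kk$-basis of a free module of that very rank; since a surjection from an $r$-generated module onto a free module of rank~$r$ over a commutative ring is an isomorphism, $\cat P$ is injective modulo~$\cat I$. Hence $\ker\cat P=\cat I$.

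The crux, and the main anticipated obstacle, is this last paragraph: one must organise the orbit and double-coset bookkeeping so that, via \Cref{Lem:coh-rel-delta} and the explicit vertical composition above, \emph{every} span is visibly reduced modulo~$\cat I$ to a $\kk$-combination of the~$\sigma_y$; and one must run the concluding counting argument over an arbitrary commutative ring~$\kk$, which is why it is phrased as ``a surjection onto a free module from an equally-generated module is an isomorphism'' rather than as a comparison of dimensions.
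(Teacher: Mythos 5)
Your proposal is correct and follows essentially the same route as the paper: reduce by additivity to transitive bisets over groups, exhibit the double-coset-indexed spans $\sigma_y$ (the paper's~\eqref{eq:aux-generators}) as generators modulo the ideal via \Cref{Lem:coh-rel-delta}, check that $\cat P$ sends them to the standard $\kk$-basis of $\Hom_{\Gamma}(\kk[\Gamma/K],\kk[\Gamma/L])\cong\kk^{n}$, and conclude by the surjection-onto-free-of-equal-rank argument. The only cosmetic difference is that the paper normalizes the left leg by conjugating the apex subgroup and then writes the span as a threefold vertical composite with the $\delta$-cell in the middle, whereas you phrase the same manipulation as vertical composition of $\delta(G,H,C,C')$ with the two structural $2$-cells; these are the same computation.
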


\begin{proof}
The pseudo-functor~$\cat{P}$ is the identity on objects by definition. It is also essentially surjective on 1-cells since permutation modules are $\kk$-linearizations of bisets. The point is the behavior on 2-cells. To formalize the statement, consider the (\emph{$\kk$-linear, additive, bicategorical}) \emph{ideal}~$\cat{J}$ of~$\kk\bisethatrf$ generated by~\eqref{eq:coh-rel-biset}, meaning the smallest class of 2-cells containing those and stable under horizontal composition with arbitrary 2-cells, and such that its restriction to each Hom category is closed under taking linear combinations and (vertical) composites with arbitrary maps. By \Cref{Exa:P(coh-rel)}, we know that $\cat{P}(\cat{J})=0$. So we have a factorization
\[
\xymatrix@!@C=2pt@R=2pt{
& {\kk \bisethatrf} \ar[dr]^-{\cat P} \ar[dl]_{\textrm{quot.}} &  \\
\cat{B}:=\displaystyle{ \frac{\kk \bisethatrf}{\cat{J}} }
 \ar[rr]^-{\bP} && {\perm^\rfree_\kk}
}
\]
where the left-hand quotient bicategory~$\cat{B}$ has the same objects and 1-cells as $\kk \bisethatrf$ and the obvious quotient by~$\cat{J}$ as 2-cells. The claim of the theorem is that $\bP$ is a biequivalence. Since $\bP$ is, like~$\cat{P}$, the identity on objects and essentially surjective on 1-cells, the claim is that $\bP$ is fully faithful on each Hom category.

Using additivity, we easily reduce to connected groupoids as objects and transitive bisets as 1-cells. So we need to prove the following. Let $G_1$ and $G_2$ be finite groups, $U$ and~$V$ two transitive $G_1,G_2$-bisets, then the $\kk$-linear homomorphism
\begin{equation}
\label{eq:P-bar}%
\bP\colon \Hom_{\cat{B}(G_1,G_2)}(U,V)\to \Hom_{\bipermk(G_1,G_2)}(\kk[U],\kk[V])
\end{equation}
is bijective.
To prove this, we shall find a certain number~$n$ of generators of the left-hand $\kk$-module $\Hom_{\cat{B}(G_1,G_2)}(U,V)$, prove that their images under $\bP$ form a $\kk$-basis of $\Hom_{\bipermk(G_1,G_2)}(\kk[U],\kk[V])$ and prove that the latter is a free $\kk$-module of the same rank~$n$. In particular those generators are $\kk$-linearly independent in $\Hom_{\cat{B}(G_1,G_2)}(U,V)$ already (as their images are) and thus $\bP$ is an isomorphism.

So let us produce those $n$ generators. As in \Cref{Rem:bisets-x}, we view bisets as left $\Gamma$-sets for $\Gamma=G_1\times G_2$ and we are assuming that $U=\Gamma/K$ and $V=\Gamma/L$ for subgroups~$K,L\le \Gamma$. The number $n$ mentioned above will be $n=|K\bs \Gamma/L|$.

By construction of $\cat{B}$ as a quotient of $\kk\bisethatrf$ and by additivity  (for disjoint unions) in the middle object of spans, it is easy to see that $\Homcat{B(G_1,G_2)}(\Gamma/K\,,\,\Gamma/L)$ is generated $\kk$-linearly by equivalence classes of the form
\begin{equation}
\label{eq:aux-general}%
[\; \Gamma / K \overset{\cdot\beta}{\Longleftarrow} \Gamma / M \overset{\cdot\gamma}{\Longrightarrow} \Gamma / L \;]
\end{equation}
for subgroups~$M\le \Gamma$. Both maps in \eqref{eq:aux-general} are necessarily given by (inner) right-multiplication $[x]\mapsto [x\cdot\beta]$ and $[x]\mapsto [x\cdot\gamma]$ by elements $\beta,\gamma\in\Gamma$ such that $M^{\beta}\le K$, respectively $M^{\gamma}\le L$. Replacing $M$ by a conjugate subgroup of~$\Gamma$, we can assume that $\beta=1$. In that case, this span~\eqref{eq:aux-general} is equal to the composite of spans
\[
\xymatrix@R=10pt{
& \Gamma / N \ar@{=>}[dl]_-{\pr} \ar@{=>}[dr]^\id  & & \Gamma / M \ar@{=>}[dl]_-{\pr} \ar@{=>}[dr]^-{\pr} & & \Gamma / N \ar@{=>}[dl]_\id \ar@{=>}[dr]^{\cdot \gamma} & \\
\Gamma / K \ar@{..>}[rr] &  & \Gamma / N \ar@{..>}[rr] & & \Gamma / N \ar@{..>}[rr] & & \Gamma / L
}
\]
where $N$ is short for~$K\cap {}^{\gamma\!}L$ and all `$\pr$' denote canonical projections.
By \Cref{Lem:coh-rel-delta}, the middle span becomes multiplication by the index~$[N\!:\!M]$ in the quotient bicategory~$\cat B$.
Our generator~\eqref{eq:aux-general} is therefore equal to~$[N\!:\!M]$ times the element
\begin{equation}
\label{eq:aux-generators}
[\; \Gamma / K \overset{\pr}{\Longleftarrow} \Gamma / (K \cap {}^{\gamma\!} L) \overset{\cdot \gamma\;}{\Longrightarrow} \Gamma / L \;]
\end{equation}
where $\gamma\in \Gamma$.
Thus the spans~\eqref{eq:aux-generators}, for $\gamma\in \Gamma$, are generators of $\Hom_{\cat{B}(G_1,G_2)}(U,V)$.
Note that the generator \eqref{eq:aux-generators} only depends on the class~$[\gamma]\in K\bs\Gamma/L$, since for every~$k\in K$ and~$\ell\in L$ the following diagram of 2-cells commutes in $\biset^\rfree$:
\[
\xymatrix@R=.1em{
& \Gamma / K\cap {}^\gamma L \ar@{=>}[dl] \ar@{=>}[dr]^{\cdot \gamma} \ar@{=>}[dd]^{\cdot k\inv}_\simeq & \\
\Gamma / K & & \Gamma / L \\
& \Gamma / K\cap {}^{k\gamma\ell} L \ar@{=>}[ul] \ar@{=>}[ur]_(.6){\cdot k\gamma\ell}&
}
\]
So it suffices to take a span~\eqref{eq:aux-generators} for each class~$[\gamma]$ in~$K\bs\Gamma/L$ to obtain our set of $n$ generators of~$\Hom_{\cat{B}(G_1,G_2)}(U,V)$ in~\eqref{eq:P-bar}, where $n=|K\bs\Gamma/L|$.

It only remains to check that the images under $\bP$ of the spans~\eqref{eq:aux-generators} for $[\gamma]\in K\bs\Gamma/L$ form a $\kk$-basis of the $\kk$-module $\Hom_{\bipermk(G_1,G_2)}(\kk[U],\kk[V])$ of~\eqref{eq:P-bar}.
In terms of left $\Gamma$-modules, this $\kk$-module is $\Hom_{\Gamma}(\kk(\Gamma/K),\kk(\Gamma/L))$.
We compute
\begin{align*}
& \Hom_{\Gamma}(\kk[\Gamma/K], \kk[\Gamma/L])
 \,\cong \, \Hom_{\Gamma}(\kk, \kk[\Gamma/K] \otimes_\kk \kk[\Gamma/L])  \, \cong \\
& \,\cong \, \bigoplus_{[\gamma]\in K\bs \Gamma /L} \kern-1em \Hom_{\Gamma} (\kk , \kk [\Gamma / K \cap {}^\gamma L])
 \, \cong \, \bigoplus_{[\gamma]\in K\bs \Gamma /L} \kern-1em \Hom_{K \cap {}^\gamma L} (\kk, \kk)
 \, \cong \, \bigoplus_{[\gamma]\in K\bs \Gamma /L} \kern-1em \kk \ \cong \,\kk^n
\end{align*}
by combining the self-duality of $\kk[\Gamma/K]$ for the tensor product, the Mackey formula, and the adjunction between restriction and induction along $K\cap {}^\gamma L \leq \Gamma$.
By explicitly retracing the element $1\in \kk$ in the summand for $[\gamma]\in K\bs \Gamma/ L$ in the target~$\kk^n$, we find its preimage in $\Hom_{\Gamma}(\kk[\Gamma/K], \kk[\Gamma/L])$ to be the $\kk\Gamma$-linear morphism
$[x]\mapsto \sum_{[y]} [y\gamma]$ with $[y]$ running through those cosets in $\Gamma/K\cap {}^\gamma L$ such that $[y]=[x]$ in~$\Gamma/K$.
The latter map is precisely the image of the generator~\eqref{eq:aux-generators} under~$\bP$, by a direct application of \Cref{Prop:P}. Indeed, the image under~$\cat{P}$ of the span $\big[ \Gamma/K\overset{\pr}\Leftarrow \Gamma / (K \cap {}^{\gamma\!} L) \overset{\cdot \gamma\,\,\;}{\Rightarrow} \Gamma/L \big]$ is by definition $(\cdot\gamma)_\star\pr^\star\colon \kk[U]\to \kk[V]$, that is, it maps a generator $[x]\in U=\Gamma/K$ to the sum $\sum_{[y]\in\pr\inv([x])}[y\gamma]$ over its preimages $[y]\in \Gamma/(K\cap {}^\gamma L)$, which are precisely those~$[y]$ such that $[y]=[x]$ in~$\Gamma/K$.
\end{proof}

\begin{Rem} [{Yoshida's Theorem; see \cite[\S7]{Webb00}}]
\label{Rem:cf-Yoshida}%
Fix a finite group~$G$.
Consider the functor $\cat P_{1,G}\colon \kk \bisethatrf(1,G)\to \perm^\rfree_\kk(1,G)$, the component functor  of our pseudo-functor~$\cat P$ at the pair $(1,G)$.
By \Cref{Rem:crossed-prod-inv}, its source 1-category is
\[
\mathrm{Sp}_\kk(G) := \kk \widehat{(G\sset)} ,
\]
the $\kk$-linear span category of finite left $G$-sets, \ie the classical Burnside category for~$G$.
Its target is just $\permcat_\kk(G)$, the category of finitely generated left permutation $\kk G$-modules. Thus $\cat P_{1,P}$ identifies with `Yoshida's functor'
\[
Y_G \colon \mathrm{Sp}_\kk(G) \to \permcat_\kk(G)
\]
sending a left $G$-set $X$ to $\kk [X]$ and a span of $G$-maps to the associated sum-over-fibers $\kk G$-linear map.
As we know, this functor is $\kk$-linear, essentially surjective and full, and Yoshida's Theorem says that an ordinary Mackey functor for $G$ (\ie an additive functor $\mathrm{Sp}_\kk(G) \to \Modcat(\kk)$) is cohomological if and only if it factors through~$Y_G$. Equivalently, this says that the kernel of $Y_G$ is generated as a $\kk$-linear categorical ideal by the differences
$I^K_LR^K_L - [K:L]\cdot \id_{K/L}$ for all $L\leq K \leq G$.
Thus we can view the results of this section as a categorification of Yoshida's Theorem.
\end{Rem}

We now derive from \Cref{Thm:2-Yoshida} the 2-universal property for $\perm^\rfree_\kk$ and thus a proof of \Cref{Thm:univ-cohom-2-Mackey-intro}.
The moment has also come to define our model for the bicategory of cohomological Mackey 2-motives.

\begin{Def}
 \label{Def:CoMot}
We define the \emph{bicategory of cohomological Mackey 2-motives} to be $\CohMotk:=(\perm^\rfree_\kk)^\flat$, the block-completion of $\perm^\rfree_\kk$ in the sense of  \cite[Constr.\,A.7.22]{BalmerDellAmbrogio20}.
Its objects are pairs $(G,\phi)$ with $G$ a finite groupoid and $\phi$ an idempotent element of the ring $\End_{\kk \perm^\rfree_\kk}(\Id_G)$; a 1-cell $(H,\psi)\to (G,\phi)$ is a pair $(M,\mu)$ with $M$ a right-free permutation $H,G$-module and $\mu= \mu^2$ an idempotent equivariant map $M\Rightarrow M$ absorbing $\phi$ and~$\psi$.
In particular, the Hom category at (the cohomological motives of) two groups $G:= (G,\id)$ and $H:=(H,\id)$ is
\[
\CohMotk ((H,\id),(G,\id)) = \big( \perm^\rfree_\kk(H,G) \big)^\natural ,
\]
the usual idempotent-completion of the additive Hom category in $\perm^\rfree_\kk$.
Its objects $(M,\mu)$ can be identified with the images $\mathrm{Im}(\mu)$ taken in the abelian category $\Bimod_\kk(H,G)$ of all $G,H$-bimodules, and the latter are summands of (right-free) permutation bimodules in the usual sense. (See \Cref{Exa:perm}.)

We have a pseudo-functor $\cohmot\colon \gpd^\op\to \CohMotk$ composed of the pseudo-functors~$\mot\colon\gpd^\op\to \Motk$ of~\eqref{eq:mot} and $\cat{P}\colon\Motk\onto \CohMotk$ of \Cref{Thm:2-Yoshida}. As with $\Motk$, for any finite groupoid~$G$, we still write $G$ for the object~$(G,\id)$ in~$\CohMotk$.

\Cref{Thm:UP-coh} will justify the motivic terminology.
\end{Def}

\begin{Def} \label{Def:coh-2Mack}
Recall the definition of a \emph{cohomological Mackey 2-functor} (\Cref{Def:co2mack-intro}).
Extending \Cref{Not:bimacks}, in the following we write
\[
\biCoMackk \subset \biMackk
\quad \textrm{ and } \quad
\bicCoMackk \subset \bicMackk
\]
for the 1- and 2-full sub-bicategories of those (idempotent-complete or not) Mackey 2-functors which are \emph{cohomological}.
\end{Def}

\begin{Thm}[Universal property]
\label{Thm:UP-coh}
The pseudo-functor $\cohmot\colon \gpd^\op\to \perm^\rfree_\kk$ induces by precomposition biequivalences of 2-categories
\[
\PsFun_\kk (\perm^\rfree_\kk, \ADD_\kk) \overset{\sim}{\to} \biCoMackk
\quad \textrm{and} \quad
\PsFun_\kk (\CohMotk, \ADDick) \overset{\sim}{\to} \bicCoMackk
\]
where $\PsFun_\kk$ denotes 2-categories of $\kk$-linear (hence additive) pseudo-functors, pseudo-natural transformations and modifications. In particular, every idempotent-complete $\kk$-linear cohomological  Mackey 2-functor factors uniquely up to isomorphism through $\CohMotk$ as claimed in \Cref{Thm:univ-cohom-2-Mackey-intro}.
\end{Thm}

\begin{proof}
By the universal property of Mackey 2-motives (see \Cref{Thm:UP-general}), the canonical embedding $\mot\colon \gpd^\op\to \kk\bisethatrf$ induces a biequivalence
\[
\PsFun_\kk (\kk\bisethatrf, \ADD_\kk) \overset{\sim}{\to} \biMack_\kk .
\]
This restricts to a biequivalence between, on the left, $\kk$-linear pseudo-functors $\widehat{\MM}$ annihilating the cohomological 2-cells~\eqref{eq:coh-rel},
and, on the right, (rectified) Mackey 2-functors $\MM$ which are cohomological. Combined with the (evident) biequivalences arising from the 2-universal property of the quotient $\kk\bisethatrf \twoheadrightarrow \kk\bisethatrf / \biker(\cat P)$ and from the biequivalence $\overline{\cat P}\colon \kk\bisethatrf / \biker(\cat P) \overset{\sim}{\to}\perm^\rfree_\kk$ of \Cref{Thm:2-Yoshida}, this yields the first claimed biequivalence.
Since $\ADDick$ is block-complete, the second claimed biequivalence follows readily from the first one by the universal property~\cite[Thm.\,A.7.23]{BalmerDellAmbrogio20} of the block-completion $\CohMotk  =  (\perm^\rfree_\kk)^\flat$.
\end{proof}

\begin{Cor} \label{Cor:univ}
For every cohomological Mackey 2-motive $X$, the composite
\[
\xymatrix{
\gpd^\op \ar[rr]^-{\cohmot} && \CohMotk \ar[rrr]^-{\CohMotk (X, -)} &&& \ADDick
}
\]
is a cohomological Mackey 2-functor. \textup(\Cf \cite[Ch.\,7.2]{BalmerDellAmbrogio20}.\textup)
\end{Cor}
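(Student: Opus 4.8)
The plan is to deduce this immediately from the universal property of \Cref{Thm:UP-coh}. That theorem asserts that precomposition with $\cohmot\colon\gpd^\op\to\CohMotk$ is a biequivalence $\PsFun_\kk(\CohMotk,\ADDick)\overset{\sim}{\to}\bicCoMackk$; in particular, for \emph{any} $\kk$-linear pseudo-functor $F\colon\CohMotk\to\ADDick$ the composite $F\circ\cohmot$ lies in the essential image, \ie is a cohomological Mackey 2-functor. So it suffices to check that the represented assignment $F:=\CohMotk(X,-)$ is a $\kk$-linear pseudo-functor into $\ADDick$.

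First I would recall the standard representable-pseudo-functor construction, exactly as carried out for $\Motk$ in \cite[Ch.\,7.2]{BalmerDellAmbrogio20}: in any $\kk$-linear bicategory $\cat B$ and for any object $X$, the rule $Y\mapsto\cat B(X,Y)$ and $(f\colon Y\to Z)\mapsto(f\circ-\colon\cat B(X,Y)\to\cat B(X,Z))$, with pseudo-functoriality constraints supplied by the associativity and unit coherence isomorphisms of $\cat B$, defines a $\kk$-linear pseudo-functor $\cat B\to\ADD_\kk$; each functor $f\circ-$ is additive and $\kk$-linear because horizontal composition in $\cat B$ is $\kk$-bilinear. Applying this with $\cat B=\CohMotk=(\bipermk)^\flat$, it remains only to note that this pseudo-functor lands in the idempotent-complete world: since $\CohMotk$ is block-complete, each Hom category $\CohMotk(X,Y)$ is idempotent-complete (and $\kk$-linear additive) by \Cref{Rec:b}. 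Thus $F=\CohMotk(X,-)\in\PsFun_\kk(\CohMotk,\ADDick)$, and $F\circ\cohmot$ is cohomological by \Cref{Thm:UP-coh}.

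I do not expect any genuine obstacle; the work has been front-loaded into \Cref{Thm:UP-coh}. The one point worth stressing — and the reason `cohomological' appears on both sides of the statement — is that the defining relations \eqref{eq:coh-rel} have already been made trivial inside $\CohMotk$ (they are killed in the passage $\Motk\onto\CohMotk$; compare \Cref{Exa:P(coh-rel)}), so any pseudo-functor out of $\CohMotk$ automatically turns $\leps_i\circ\reta_i-[G\!:\!H]\cdot\id$ into an equality, which is precisely what upgrades $F\circ\cohmot$ from a Mackey 2-functor to a cohomological one. Alternatively, bypassing \Cref{Thm:UP-coh}, one can rerun the verification of \cite[Ch.\,7.2]{BalmerDellAmbrogio20} directly on $\CohMotk$: a representable pseudo-functor there is a Mackey 2-functor, and since the 2-cells of \Cref{Def:co2mack-intro} are identities in $\CohMotk$, the very same computation shows it is cohomological.
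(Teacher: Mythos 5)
Your proposal is correct and follows exactly the paper's route: the paper's proof is the one-line ``Immediate from the second biequivalence in \Cref{Thm:UP-coh},'' and you simply make explicit the (standard) check that $\CohMotk(X,-)$ is a $\kk$-linear pseudo-functor landing in $\ADDick$ thanks to block-completeness. No issues.
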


\begin{proof}
Immediate from the second biequivalence in \Cref{Thm:UP-coh}.
\end{proof}

%
\section{Motivic decompositions}
\label{sec:motivic-decompositions}%

In this section, we take a closer look at the bicategory of $\kk$-linear cohomological Mackey 2-motives $\CohMotk := (\perm^\rfree_\kk)^\flat$ of \Cref{Def:CoMot}, in order to compare cohomological and general motives.

Let us start with a few words about motivic decompositions.
Any decomposition of the Mackey 2-motive $\mot(G)\simeq X_1 \oplus \ldots \oplus X_n$ in $\Motk$ can be realized for \emph{any} Mackey 2-functor~$\MM$ as a decomposition of the additive category $\MM(G)$:
\[
\MM(G)\simeq \widehat{\MM}(X_1) \oplus \ldots \oplus \widehat{\MM}(X_n)
\]
where $\widehat{\MM}$ is the pseudo-functor extending~$\MM$ to Mackey 2-motives. See after \eqref{eq:mot-intro}.
Motivic decompositions are universal, in that they only depend on the group $G$ and not on the particular Mackey 2-functor~$\MM$.
See \cite[\S7.4-5]{BalmerDellAmbrogio20} for details.
\begin{Rem} \label{Rem:general-blocks}
In \cite[Ch.\,7.4]{BalmerDellAmbrogio20}, we found an explicit isomorphism of $\kk$-algebras between the motivic algebra of 2-cells $\End_{\kk\Spanhatrf}(\Id_G)$ of a group $G$ and the socalled \emph{crossed Burnside algebra} $\xBurk(G)$ first studied by Yoshida~\cite{Yoshida97}. Concretely, $\xBurk(G)$ is a finite free $\kk$-module generated by the set of $G$-conjugacy classes $[H,a]_G$ of pairs $(H,a)$ with $H\leq G$ a subgroup and $a\in \mathrm C_G(H)$ an element of the centralizer of~$H$, equipped with the (commutative!) multiplication induced by the formula
\[
[K,b]_G \cdot [H,a]_G = \sum_{[g] \in K\bs G/H} [K\cap {}^g H , bgag^{-1}]_G .
\]
Thus every general Mackey 2-motive is equivalent to a direct sum of pairs $(G,e)$ with $G$ a finite group and $e=e^2\in \xBurk(G)$ an idempotent.
\end{Rem}

An analogous discussion can be done for \emph{cohomological} motivic decompositions of~$\cohmot(G)$ in~$\CohMotk$. To understand the analogue of the crossed Burnside algebra in the cohomological setting, we need to understand $\End_{\perm^\rfree_\kk}(\Id_G)$.

\begin{Rem} \label{Rem:blocks}
For a finite group~$G$, the 2-cell endomorphism $\kk$-algebra in~$\perm^\rfree_\kk$ can be easily identified with the center of the group algebra~$\kk G$:
\[
\End_{\perm^\rfree_\kk}(\Id_G) \cong \mathrm Z(\kk G).
\]
Indeed, if $\varphi$ is an equivariant endomorphisms of the bimodule $\Id_G= {}_G\kk G_{G}$, the image~$\varphi(1_G)$ determines $\varphi$ and belongs to the center, since $g \varphi(1_G)= \varphi(g) = \varphi(1_G)g$ for all $g\in G$. Conversely, it is clear that any element of the center may serve as~$\varphi(1_G)$.
Hence, by the definition of the block-completion, every cohomological Mackey 2-motive is equivalent to a direct sum of pairs $(G,f)$ with $G$ a group and $f=f^2\in \kk G$ a central idempotent.
It is indecomposable if and only if $f$ is primitive.
\end{Rem}

\begin{Rem} \label{Rem:unique-decomp}
The decomposition of a Mackey 2-motive into a direct sum of indecomposable ones, both in $\Motk$ (as in \Cref{Rem:general-blocks}) and in $\CohMotk$ (as in \Cref{Rem:blocks}), is unique up to permutation and equivalence of the factors; see \cite[Cor.\,7.9]{DellAmbrogio22b}.
\end{Rem}

Following the above pattern, we now obtain:
\begin{Thm}
\label{Thm:universal-block-factorization}
If $\MM$ is any cohomological Mackey 2-functor, its value category at each finite group $G$ admits a canonical decomposition into direct factors
\[
\MM(G) \cong \bigoplus_b \MM(G;b)
\]
indexed by the blocks (primitive central idempotents) $b$ of the group algebra~$\kk G$.
\end{Thm}

\begin{proof}
Just apply the reasoning of \cite[\S\,7.5]{BalmerDellAmbrogio20} to the motivic decompositions of \Cref{Rem:blocks}, as explained above.
\end{proof}

Recall the linearization pseudo-functor $\cat P\colon \kk \Spanhatrf \to \perm^\rfree_\kk$ of \Cref{sec:cohom-2-motives}.
By applying block-completion $(-)^\flat$ to both sides, it extends to a pseudo-functor
\[
\cat P\colon \Motk \longrightarrow \CohMotk
\]
comparing general and cohomological Mackey 2-motives. The following result gives a very concrete description of the effect of~$\cat{P}$ on equivalence classes of motives:

\begin{Thm} \label{Thm:motivic-comparison}
For every finite group~$G$, there is a well-defined surjective morphism of commutative rings
$
\rho_G \colon \xBurk(G) \to \mathrm Z(\kk G)
$
sending a basis element $[H,a]_H$ to $\sum_{[x]\in G/H} xax^{-1}= \sum_{[y]\in H\bs G}y^{-1}xy$.
Collectively, they govern the behavior of $\cat{P}$ on equivalence classes of 2-motives, meaning that $\cat{P}$ maps the general Mackey 2-motive $\oplus_i (G_i,e_i)$ (see \Cref{Rem:general-blocks}) to the cohomological Mackey 2-motive $\oplus_i (G_i,\rho_{G_i}(e_i))$ (see \Cref{Rem:blocks}), where of course $(G,0)\cong 0$ in both bicategories.
\end{Thm}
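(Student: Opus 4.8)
The plan is to split the statement into two essentially independent parts: first, establish that $\rho_G$ is a well-defined surjective ring homomorphism $\xBur_\kk(G)\to \mathrm Z(\kk G)$; second, show that this $\rho_G$ is precisely what $\cat P$ does to 2-cell endomorphism algebras of $\Id_G$, and deduce the statement about equivalence classes of motives from this together with the general theory of block decompositions recalled in \Cref{Rem:general-blocks,Rem:blocks}.

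For the first part, recall from \cite[Ch.\,7.4]{BalmerDellAmbrogio20} the identification $\End_{\kk\Spanhatrf}(\Id_G)\cong \xBur_\kk(G)$, under which the basis element $[H,a]_G$ corresponds to the span of right-free $G,G$-bisets $[\,G \Leftarrow (G\times G)/\Delta_a(H) \Rightarrow G\,]$, where $\Delta_a(H)=\SET{(h,a\inv h a)}{h\in H}$ (or whatever explicit form \emph{loc.\,cit.} uses), with the two legs being appropriate projections. I would simply compute the image of this span under the linearization pseudo-functor $\cat P$ of \Cref{Prop:P} using the sum-over-fibers recipe~\eqref{eq:sums-over-fibers}, combined with the identification $\End_{\perm^\rfree_\kk}(\Id_G)\cong \mathrm Z(\kk G)$ of \Cref{Rem:blocks} (so that a bimodule endomorphism of ${}_G\kk G_G$ is recorded by the image of~$1_G$). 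A direct computation on the basis element $1_G$ then yields $\sum_{[x]\in G/H} x a x\inv$, matching the claimed formula; the two forms $\sum_{[x]\in G/H} xax\inv = \sum_{[y]\in H\bs G} y\inv x y$ agree by re-indexing cosets. This simultaneously shows $\rho_G$ is well-defined (it is literally $\cat P_{G,G}$ on these algebras, hence a ring homomorphism since $\cat P$ is a pseudo-functor and horizontal composition in both bicategories is what induces multiplication on $\End(\Id_G)$), and surjective: surjectivity of $\cat P$ on 2-cells of $\kk\Spanhatrf(G,G)$ follows from (the relevant part of) \Cref{Thm:2-Yoshida}, or can be seen directly since the elements $\rho_G([\{1\},a]_G)=\sum_{x} xax\inv$ are the conjugacy class sums, which are well known to span $\mathrm Z(\kk G)$.

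For the second part, I would invoke the general formalism: block-completing $\cat P\colon \kk\Spanhatrf\to \perm^\rfree_\kk$ gives $\cat P\colon\Motk\to\CohMotk$, and by the descriptions in \Cref{Rem:general-blocks,Rem:blocks} every object of $\Motk$ is equivalent to a finite direct sum $\oplus_i(G_i,e_i)$ with $e_i=e_i^2\in\xBur_\kk(G_i)$, while every object of $\CohMotk$ is equivalent to $\oplus_j(G_j,f_j)$ with $f_j\in\mathrm Z(\kk G_j)$ a central idempotent. A block-completion functor, by its very construction (it splits idempotent 2-cells as summands of objects), sends the object $(G,e)$ to the object $\cat P(G)$ cut out by the idempotent $\cat P(e)$ on $\Id_G$, which under our identifications is exactly $(G_i,\rho_{G_i}(e_i))$; and it is additive on objects, so a direct sum goes to the direct sum of the images. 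Since $\rho_G$ is a ring homomorphism it sends idempotents to idempotents, and when $\rho_G(e)=0$ the corresponding summand is the zero object, consistent with the convention $(G,0)\cong 0$. Assembling these observations gives the statement.

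The main obstacle I anticipate is purely bookkeeping: getting the conjugation conventions exactly right so that the span representing $[H,a]_G$ in $\End_{\kk\Spanhatrf}(\Id_G)$, as fixed in \cite[Ch.\,7.4]{BalmerDellAmbrogio20}, pushes forward under~$\cat P$ to the specific element $\sum_{[x]\in G/H}xax\inv$ rather than, say, $\sum x\inv a x$ or a version twisted by the isomorphism $(G\times G)/\Delta(H)\cong G\times_H G$ of \Cref{Exa:Delta-1}. Once that one computation is pinned down the rest is formal: checking $\rho_G$ respects the crossed-Burnside multiplication reduces to the identity $[K,b]_G\cdot[H,a]_G = \sum_{[g]\in K\bs G/H}[K\cap{}^gH, bgag\inv]_G$ being sent to the product of conjugacy-class-weighted sums in $\mathrm Z(\kk G)$, which is again a direct (if slightly tedious) comparison of double-coset sums, or can be sidestepped entirely by noting it is forced by functoriality of~$\cat P$.
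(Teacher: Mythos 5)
Your proposal is correct and follows essentially the paper's own route: $\rho_G$ is defined as the composite of $\cat P$ on $\End(\Id_G)$ with the identifications of \Cref{Rem:general-blocks} and \Cref{Rem:blocks}, the explicit formula is verified by direct inspection of the sum-over-fibers recipe, surjectivity comes from 2-fullness of $\cat P$ (\Cref{Thm:2-Yoshida}), and the statement about motives then follows formally from block-completion. One small caveat on your optional alternative surjectivity argument: $\rho_G([\{1\},a]_G)=\sum_{x\in G}xax^{-1}$ is $|\mathrm C_G(a)|$ times the class sum of~$a$, not the class sum itself, so these elements need not span $\mathrm Z(\kk G)$ in positive characteristic; the class sum is rather $\rho_G([\mathrm C_G(a),a]_G)$, or one simply retains the 2-fullness argument.
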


\begin{proof}
Define $\rho_G$ by the following diagram:
\[
\xymatrix{
\xBurk (G) \ar[d]_-{\textrm{\cite[Thm.\,7.4.5]{BalmerDellAmbrogio20}}}^-{\cong} \ar@{-->}[rr]^-{\rho_G} && Z(\kk G)  \\
  \End_{\kk\Spanhatrf}(\Id_G) \ar@{->>}[rr]^-{\cat{P}} &&  \End_{\CohMotk} (\Id_G) \ar[u]^-{\cong}_-{\textrm{\Cref{Rem:blocks}}}
}
\]
A direct inspection of the definitions reveals that $\rho_G$ is indeed given by the claimed formula.
(We use here the notation $\rho_G$ because this map is essentially a special case of the homonymous one studied in \cite[Ch.\,7.5]{BalmerDellAmbrogio20}; indeed $\rho_G([H,a]_G)$ corresponds in $\End_{\CohMotk} (\Id_G)$ to the composite equivariant map
\[
\xymatrix{
{}_G \kk G_G \ar@{=>}[r]^-{\reta} &
 {}_G \kk G \otimes_H  \kk G_G  \ar@{=>}[r]^-{\Ind (\gamma_a )} &
  {}_G \kk G \otimes_H  \kk G_G \ar@{=>}[r]^-{\leps} &
   {}_G \kk G_G
}
\]
with $\reta$ and $\leps$ as in \Cref{Rem:P-adj-groups} and $\gamma_a\colon {}_H\kk G_G\Rightarrow {}_H\kk G_G$  defined by $g\mapsto ag$.)
Each $\rho_G$ is a surjective map of commutative rings because it is a composite of such; in particular, $\cat{P}$ is 2-full by \Cref{Thm:2-Yoshida}. The remaining claims are immediate from \Cref{Rem:general-blocks} and \Cref{Rem:blocks}.
\end{proof}

\begin{Rem} \label{Rem:duhduh!}
Note that $\rho_G([H,1]_G)= [G\!:\!H]\in \kk \subset \kk G$ (of course!).
Recall that the ordinary \emph{Burnside ring} $\Burk(G)$ identifies with the subalgebra of $\xBurk(G)$ generated by such basis elements, hence is sent to $\kk$ by~$\rho_G$.
It follows that, on \emph{cohomological} Mackey 2-functors, the idempotents of $\Burk(G)$ do not produce any interesting factor.
This is wrong for \emph{non}-cohomological Mackey 2-functors, \eg for equivariant stable homotopy theory (see \cite[Ex.\,4.3.8]{BalmerDellAmbrogio20} and \cite[App.\,A]{GreenleesMay95}).
\end{Rem}

Very recently, Oda, Takegahara and Yoshida \cite{OTY22} have proved explicit refinements of \Cref{Thm:motivic-comparison} for some important cases of the base ring~$\kk$, by describing the primitive idempotents of $\xBurk(G)$ and their behavior under~$\rho_G$ in terms of the structure of~$G$. Perhaps not surprisingly, this problem appears to be subtle even for $\kk$ a well-behaved local ring, where there are connections with character theory; \cf \cite{Bouc03}.
On our part, we can offer the following general lifting result:

\begin{Cor} \label{Cor:ess-surj}
Assume the commutative ring $\kk$ is a complete local noetherian ring, for instance, a field.
In this case, the pseudo-functor
$
\cat{P}\colon \Motk \to \CohMotk
$
is essentially surjective on objects and 1-cells. In particular, it is genuinely a quotient pseudo-functor of $\kk$-linear bicategories -- not just up to retracts.
\end{Cor}

\begin{proof}
By \Cref{Def:CoMot} and \Cref{Thm:2-Yoshida}, we already know that each object or 1-cell of the block-completion $\CohMotk= (\perm^\rfree_\kk)^\flat$ is a retract of an object or 1-cell in the image of~$\cat{P}$.
Hence it suffices to show that arbitrary idempotents can be lifted along the algebra morphisms
$
\cat{P}\colon \End_{\kk \Spanhatrf}(S)  \to \End_{\perm^\rfree_\kk}(\cat{P}(S))
$
for all 1-cells~$S$ of~$\kk \Spanhatrf \cong \kk \bisethatrf$.
By construction, the latter are surjective (by \Cref{Thm:2-Yoshida}) morphisms of (noncommutative) finite dimensional $\kk$-algebras. For $\kk$ as above, the result now follows from the general lifting theorem \cite[Thm.\,4.7.1]{Linckelmann18a}. 
\end{proof}



\end{document}